\newtheorem{thm}{Theorem}[section]
\newtheorem{lem}[thm]{Lemma}
\newtheorem{prop}[thm]{Proposition}
\theoremstyle{definition}
\newtheorem{rem}[thm]{Remark}
\numberwithin{equation}{section}
\theoremstyle{remark}
\begin{document}

\title{\textbf{Small gaps between the set of products of at most two primes}}

\author{Keiju Sono}

\allowdisplaybreaks

\date{}

\maketitle 
\noindent
\begin{abstract}
In this paper, we apply the methods of Maynard and Tao to the set of products of two distinct primes ($E_{2}$-numbers). We obtain several results on the distribution of $E_{2}$-numbers and primes. Among others, the result of Goldston,  Pintz,  Y{\i}ld{\i}r{\i}m and Graham \cite{GGPY2} on small gaps between $m$ consecutive $E_{2}$-numbers is improved.
\end{abstract}

\section{Introduction}
The famous twin prime conjecture asserts that there exist infinitely many prime numbers $p$ for which $p+2$ is also a prime, and this conjecture is widely believed to be true. More generally, about one hundred years ago, Hardy and Littlewood \cite{HL} conjectured the following, called the Hardy-Littlewood prime $k$-tuple conjecture. Let ${\cal H}=\{h_{1}, \cdots ,h_{k} \}$ be a set of  $k$ distinct non-negative integers. Then, the number of those $n$ below $N$ such that all of $n+h_{1}, \cdots ,n+h_{k}$ are primes will be asymptotically 
\[
\frac{N}{\log ^{k}N}\prod _{p}\left( 1-\frac{\nu _{p}({\cal H})}{p}\right)\left( 1-\frac{1}{p} \right)^{-k}
\]
provided that $\nu _{p}({\cal H}) <p$ for all primes $p$, where $\nu _{p}({\cal H})$ denotes the number of residue classes $\mathrm{mod}\; p$ covered by ${\cal H}$. In this case we say that the set ${\cal H}$ is admissible.  In particular, the twin prime conjecture is the case  $k=2$ and ${\cal H}=\{0, 2 \}$. Although this conjecture is still far from our reach, several remarkable results toward this have been established. For example,  in a celebrated paper \cite{C}, Chen proved that there exist infinitely many primes $p$ for which $p+2$ is either a prime or a product of two primes which are not necessarily distinct. 

Recently the studies toward the twin prime conjecture have produced further progress. In 2009, Goldston, Pintz and Y{\i}ld{\i}r{\i}m \cite{GPY1} proved that 
\begin{equation}
\label{GPY}
\liminf _{n\to \infty}\frac{p_{n+1}-p_{n}}{\log p_{n}}=0,
\end{equation}
where $p_{n}$ denotes the $n$-th prime. Their method is called the GPY sieve.  Moreover, they proved that if primes have the level of distribution $\theta$ for some $1/2 <\theta \leq 1$ (see the definition of $BV[\theta ,{\cal P}]$ below), then 
\begin{equation}
\label{GPY'}
\liminf _{n\to \infty}(p_{n+1}-p_{n})<\infty .
\end{equation}
The above assumption seems to be extremely difficult to prove, although it is known that the Bombieri-Vinogradov theorem assures that this is valid for $\theta \leq 1/2$. With $\theta =1/2$ one obtains (\ref{GPY}). The  case $\theta =1$ is called the Elliott-Halberstam conjecture (EH).  Several improvements have been made by the above three authors (see \cite{GPY2}, \cite{GPY3}, \cite{GPY4}). Among others, their best result on gaps between consecutive primes is that 
\begin{equation}
\label{GPY''}
\liminf _{n\to \infty}\frac{p_{n+1}-p_{n}}{\sqrt{\log p_{n}}(\log \log p_{n})^{2}}< \infty .
\end{equation}
Later, Pintz \cite{P} improved this result and obtained
\begin{equation}
\label{P}
\liminf _{n\to \infty}\frac{p_{n+1}-p_{n}}{(\log p_{n})^{\frac{3}{7}}(\log \log p_{n})^{\frac{4}{7}}} <\infty .
\end{equation}
(See also \cite{GPY5}.) In 2013, a stunning result was established by Zhang \cite{Z}. He obtained a stronger version of the Bombieri-Vinogradov theorem that is applicable when the moduli are free from large prime divisors, and using this, he proved that 
\begin{equation}
\label{Z}
\liminf _{n\to \infty}(p_{n+1}-p_{n})<7\times 10^{7},
\end{equation}
that is, there exist infinitely many consecutive primes for which the gap is at most $7\times 10^{7}$. The upper bound $7\times 10^{7}$ has been improved by several experts successively. In the Polymath8a paper \cite{Polymath1},  the right hand side of (\ref{Z}) was replaced by 4680. Slightly later, Maynard \cite{M1} and Tao (private communication with Maynard) invented a refinement  of the GPY sieve. In particular, Maynard proved that 
\begin{equation}
\label{M}
\liminf _{n\to \infty}(p_{n+1}-p_{n})\leq 600.
\end{equation}
They also proved the existence of the bounded gaps between $m$-consecutive primes for any fixed $m\geq 2$. One of the remarkable points is that their method is relatively quite simple, compared with Zhang's,  and it is very convenient to extend or generalize to other situations. The current world record of the small gaps between primes is accomplished by the Polymath project \cite{Polymath2}, in which the upper bound
\begin{equation}
\label{PM}
\liminf _{n\to \infty}(p_{n+1}-p_{n})\leq 246
\end{equation}
is obtained unconditionally. Moreover, it is proved that the right hand side of (\ref{PM}) may be replaced by $6$ if we assume a strong form of the Elliott-Halberstam conjecture and that is the limit of this method.

In this paper, we treat  the integers expressed by products of two distinct primes, called the $E_{2}$-numbers in \cite{GGPY1}, together with the prime numbers. In the papers \cite{GGPY1}, \cite{GGPY2}, Goldston, Graham, Pintz and Y{\i}ld{\i}r{\i}m investigated the distribution of $E_{2}$-numbers. We denote by $q_{n}$ the $n$-th $E_{2}$-numbers. That is, $q_{1}=6, q_{2}=10, q_{3}=14, q_{4}=15, \cdots$. Using the GPY sieve, they proved that 
\begin{equation}
\label{GGPY}
\liminf _{n\to \infty}(q_{n+1}-q_{n})\leq 6.
\end{equation}
Moreover, they proved that if the $E_{2}$-numbers have the level of distribution $\theta$ for some $0<\theta <1$, then for any sufficiently large $\rho \in \mathbf{N}$, 
\begin{equation}
\label{GGPY'}
\liminf _{n\to \infty}(q_{n+\rho }-q_{n})\leq \rho (1+o(1)) \exp \left(-\gamma +\frac{\rho}{2\theta} \right)
\end{equation}
holds. Later Thorne \cite{T} generalized their results to the set of products of $r$ distinct primes for any $r\geq 2$. He applied the result to some related problems in number theory, for example, divisibility of class numbers, nonvanishing of $L$-functions at the central point, and triviality of ranks of elliptic curves. 

The purpose of this paper is to apply the method of Maynard \cite{M1} and Tao to the distribution of $E_{2}$-numbers. Their multi-dimensional sieve enables us to  establish rather small gaps between consecutive several $E_{2}$-numbers. In particular, the estimate (\ref{GGPY'}) can be remarkably improved. We denote by ${\cal E}_{2}$ the set of all $E_{2}$-numbers.  We denote by ${\cal P}$ the set of all prime  numbers, and put ${\cal P}_{2}={\cal P}\cup {\cal E}_{2}$. For a sufficiently large natural number $N$, we define
\begin{eqnarray}
\beta (n):=
\left\{
\begin{array}{l}
1 \quad (n=p_{1}p_{2},\;  Y<p_{1}\leq N^{\frac{1}{2}}<p_{2}) \\
0 \quad (\mathrm{otherwise}),
\end{array}
\right.
\end{eqnarray}
where $Y=N^{\eta}$, $1\ll \eta <\frac{1}{4}$. Throughout this paper, the implicit constants might be dependent on this $\eta$. (We will not necessarily mention  this fact every time.) Next we define 
\[
\pi ^{\flat}(N):=\sharp \{p \in {\cal P}\; \vline \; N\leq p<2N \},
\]
\[
\pi ^{\flat}(N;q,a):=\sharp \{p \in {\cal P}\; \vline \; N\leq p<2N, p\equiv a\; (\mathrm{mod}\; q)\},
\]

\[
\pi _{\beta}(N):=\sum _{N<n\leq 2N}\beta (n), \quad \pi _{\beta ,q}(N):=\underset{(n,q)=1}{\sum _{N<n\leq 2N}}\beta (n),
\]
\[
\pi _{\beta}(N;q,a):=\underset{n\equiv a (\mathrm{mod}\; q)}{\sum _{N<n\leq 2N}}\beta (n).
\]
We write the following hypotheses:\\
\quad \\
\noindent
{\bf Hypothesis} 1 ($BV[\theta ,{\cal P}]$) \\
For any $\epsilon >0$, the estimate
\begin{equation}
\label{BVP}
\sum _{q\leq N^{\theta -\epsilon}} \mu ^{2}(q)\max _{(a,q)=1}\left| \pi ^{\flat}(N;q,a)-\frac{\pi ^{\flat}(N)}{\varphi (q)} \right|\ll _{A} \frac{N}{\log ^{A}N} \quad (N\to \infty)
\end{equation}
holds for any $A >0$. \\

\newpage

\noindent
{\bf Hypothesis} 2 ($BV[\theta ,{\cal E}_{2}]$) \\
We fix an arbitrary $0<\eta <\frac{1}{4}$ in the definition of the function $\beta$. For any $\epsilon >0$, the estimate
\begin{equation}
\label{BVE2}
\sum _{q\leq N^{\theta -\epsilon}}\mu ^{2}(q)\max _{(a,q)=1}\left| \pi _{\beta}(N;q,a)-\frac{\pi _{\beta ,q}(N)}{\varphi (q)} \right|\ll _{A} \frac{N}{\log ^{A}N} \quad (N\to \infty)
\end{equation}
holds for any $A >0$. \\
\quad \\
\noindent
We say that the set ${\cal P}$ (resp. ${\cal E}_{2}$) has level of distribution $\theta$ if $BV[\theta ,{\cal P}]$ (resp. $BV[\theta , {\cal E}_{2}]$) holds. The Bombieri-Vinogradov theorem  asserts that $BV[\theta , {\cal P}]$ is valid for $\theta =1/2$. Motohashi \cite{Mot} proved that $BV[\theta ,{\cal E}_{2}]$ also holds  for $\theta =1/2$. The Elliott-Halberstam conjecture asserts that $BV[\theta , {\cal P}]$ will be valid for $\theta =1$, and we expect that  $BV[\theta ,{\cal E}_{2}]$ will be valid for the same value. Hence we call $BV[1, {\cal P}]$ (resp. $BV[1, {\cal E}_{2}]$ ) the Elliott-Halberstam conjecture for ${\cal P}$ (resp. ${\cal E}_{2}$). 

The main theorems of this paper are as follows: 

\begin{thm}
Assume that the sets ${\cal P}$ and ${\cal E}_{2}$ have level of distribution $\theta >0$. Then, for any $\epsilon >0$, there exists $\rho _{\epsilon}>0$ such that for any integer $\rho >\rho _{\epsilon}$, the inequality 
\begin{equation}
\label{thm1}
\liminf _{n\to \infty}(q_{n+\rho}-q_{n})\leq \exp \left( \frac{(2+\epsilon )\rho}{3\theta \log \rho } \right)
\end{equation}
holds. In particular, unconditionally we have 
\begin{equation}
\label{thm1.5}
\liminf _{n\to \infty}(q_{n+\rho}-q_{n})\leq \exp \left( \frac{2(2+\epsilon )\rho}{3 \log \rho } \right)
\end{equation}
for any $\rho >\rho _{\epsilon}$.
\end{thm}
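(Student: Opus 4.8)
The plan is to set up a Maynard--Tao style multidimensional sieve weighted so as to detect $E_{2}$-numbers rather than primes. Fix an admissible $k$-tuple $\mathcal{H}=\{h_1,\dots,h_k\}$ and consider the weighted sum
\begin{equation}
S(N,\rho)=\sum_{N<n\leq 2N}\left(\sum_{i=1}^{k}\mathbf{1}_{\mathcal{P}_{2}}(n+h_i)-\rho\right)\left(\sum_{d_i\mid n+h_i\ \forall i}\lambda_{d_1,\dots,d_k}\right)^{2},
\end{equation}
where the $\lambda$'s are supported on $\prod d_i\leq N^{\theta-\epsilon}$ and have the usual Maynard shape $\lambda_{d_1,\dots,d_k}=\mu(\prod d_i)F\!\left(\frac{\log d_1}{\log R},\dots,\frac{\log d_k}{\log R}\right)$ for a smooth $F$ supported on the simplex. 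If $S(N,\rho)>0$ for infinitely many $N$ then infinitely often some window $[n+h_1,n+h_k]$ of length at most $h_k-h_1$ contains at least $\rho+1$ elements of $\mathcal{P}_{2}$, hence at least $\rho+1$ of the $q_j$ (primes contribute only boundedly often among $E_{2}$-numbers being counted, or one simply works with the genuine $E_2$-indicator via $\beta$ together with $\mathbf{1}_{\mathcal P}$ and notes $\mathcal P_2=\mathcal P\cup E_2$), so $\liminf(q_{n+\rho}-q_n)\leq h_k-h_1$.

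The main asymptotic input is the evaluation of the two pieces of $S$. The ``denominator'' $\sum_n(\sum_{d_i\mid n+h_i}\lambda)^2=(1+o(1))\,\frac{N(\log R)^k}{\prod(\log)}\,I_k(F)$ is exactly as in Maynard, using only $R\leq N^{1/2}$. The ``numerator'' splits as $\sum_{i=1}^k\sum_n\mathbf{1}_{\mathcal P_2}(n+h_i)(\cdots)^2$; for the prime part one invokes $BV[\theta,\mathcal P]$ (Hypothesis 1) and for the $E_2$ part one invokes $BV[\theta,E_2]$ (Hypothesis 2), each yielding a contribution of shape $(1+o(1))\cdot(\text{density factor})\cdot\frac{N(\log R)^{k+1}}{\log N\,\prod(\log)}J_k^{(i)}(F)$. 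The arithmetic density of $E_2$-numbers in a progression, expressed through $\pi_{\beta,q}/\varphi(q)$, contributes a main term proportional to $\log\log N$ (reflecting $\sum_{p_1}1/p_1$ over the range $Y<p_1\leq N^{1/2}$), which is precisely the mechanism that, after optimizing $\eta$, produces the factor $2/3$ improvement over \eqref{GGPY'}: the number of ``ways'' to be an $E_2$-number is richer than being prime, so the ratio numerator/denominator is boosted by a factor $\sim(\tfrac12-\eta)\log\log N+\dots$ compared with the pure-prime case, and one checks $\eta\downarrow 0$ is admissible giving the constant $2/(3\theta)$ in the exponent.

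With these evaluations, positivity of $S(N,\rho)$ reduces to the inequality $M_k(F):=\frac{\sum_i J_k^{(i)}(F)}{I_k(F)}>\frac{\rho}{c_\eta\log R/\log N}$ for some admissible $F$, where $c_\eta$ absorbs the $E_2$-density constant. One then uses the known lower bounds for $\sup_F M_k$ — namely $M_k\geq\log k-O(1)$ from the Maynard/Polymath analysis of the symmetric functional — so that choosing $k$ of size roughly $\exp\!\big(\tfrac{(2+\epsilon)\rho}{3\theta\log\rho}\big)$ and taking $\mathcal{H}$ to be the first $k$ primes exceeding $k$ (an admissible tuple with diameter $\ll k\log k$) yields $h_k-h_1\leq\exp\!\big(\tfrac{(2+\epsilon)\rho}{3\theta\log\rho}\big)$ once $\rho>\rho_\epsilon$; the unconditional bound \eqref{thm1.5} follows by inserting $\theta=1/2$. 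The hard part will be the numerator computation for the $E_2$-contribution: one must handle the inner sum over $n$ with $\beta(n+h_i)$ and the squared divisor sum, interchange summations, isolate the main term via $BV[\theta,E_2]$ after truncating the $d_i$, and carefully track the $\log\log N$-sized arithmetic factor — including showing the error terms (off-diagonal $d_i$ with $(d_i,d_j)>1$, and the contribution where $n+h_i$ is prime rather than $E_2$, and the boundary primes $p_1\le Y$ or $p_1>N^{1/2}$) are genuinely lower order — which is the step where the technical weight of the paper lies.
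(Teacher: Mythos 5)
Your outline correctly reproduces the overall architecture — Maynard–Tao divisor-sum weights, splitting the numerator into a prime piece and an $E_2$ piece, invoking $BV[\theta,\mathcal P]$ and $BV[\theta,E_2]$, and then choosing the admissible tuple to be $k$ consecutive primes exceeding $k$ so that the diameter is $\ll k\log k$. But the proposal misses the two mechanisms that actually produce the exponent $(2+\epsilon)\rho/(3\theta\log\rho)$, and one of your heuristics is factually off.

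First, the claim that $\pi_{\beta,q}/\varphi(q)$ contributes a factor $\sim\log\log N$ is wrong as stated. The function $\beta$ is deliberately restricted to $E_2$-numbers $p_1p_2$ with $p_1>Y=N^{\eta}$, and for a \emph{fixed} $\eta>0$ the Mertens sum $\sum_{Y<p_1\le N^{1/2}}1/p_1$ converges to the constant $\log(1/(2\eta))$; the actual density is $\frac{N}{\log N}\log\frac{1-\eta}{\eta}$ (this is Lemma~4.1 of the paper), not $N\log\log N/\log N$. The $\log\log$ boost enters not through $N$ but through $k$: the paper takes $\eta=\theta T/k\sim\theta/(\log k)^{3}$, so that $\log\frac{1-\eta}{\eta}\sim 3\log\log k$; that $3$ is precisely the $3$ in the denominator $3\theta$. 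Your remark that ``one checks $\eta\downarrow 0$ is admissible'' skips over this — the rate at which $\eta\to 0$ with $k$ is what determines the constant.

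Second, and more seriously, your positivity criterion omits a negative term. Decomposing $S_2^{(m)}$ according to whether $p\mid d_m$ and/or $p\mid e_m$ (the four cases I–IV in the paper) yields, besides the favourable $S_{2,\mathrm{III}}^{(m)}$ term with the $J_k^{(m)}(F)$ factor, cross terms $S_{2,\mathrm{I}}^{(m)},S_{2,\mathrm{II}}^{(m)}$ carrying a \emph{minus} sign (the $L_{k,0}^{(m)}(F)$ contributions) and a term $S_{2,\mathrm{IV}}^{(m)}$ (the $M_{k,0}^{(m)}(F)$ contribution). Your statement that ``positivity of $S(N,\rho)$ reduces to $\sum_iJ_k^{(i)}(F)/I_k(F)>\rho/(c_\eta\cdot\text{stuff})$'' is only correct \emph{after} you have arranged for $L_{k,0}^{(m)}(F)$ and $M_{k,0}^{(m)}(F)$ to vanish. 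That is the crux: with Maynard's truncated test function $F=\prod_{i}g(ku_i)$, where $g$ is supported on $[0,T]$, and with the coupled choice $\eta=\theta T/k$, the shifted argument $k\bigl(\tfrac{2\xi}{\theta}+\tfrac{\theta-2\xi}{\theta}u_m\bigr)$ is always $\ge 2T>T$ for $\xi\ge\eta$, so the integrand in $L_{k,0}^{(m)}$ and $M_{k,0}^{(m)}$ is identically zero. Without noticing this cancellation you have no control over the $-L$ term, and it is not lower order. So while your proposal captures the broad shape, it is missing the central idea that makes the theorem true with that exponent, namely the simultaneous choice of the Maynard test function and of $\eta$ so that the cross terms die identically; the rest is the well-known bound $kJ_k^{(1)}(F)/I_k(F)\gtrsim\log k-2\log\log k$ followed by arithmetic with $k\sim\exp((2+\epsilon)\rho/(3\theta\log\rho))$.
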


\begin{thm}
For any admissible set ${\cal H}=\{h_{1}, h_{2}, \cdots ,h_{6} \}$, there exist  infinitely many $n$ such that at least three of $n+h_{1}, n+h_{2}, \cdots ,n+h_{6}$ are in ${\cal P}_{2}$. 
\end{thm}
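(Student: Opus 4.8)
The statement is a "three out of six" result for $\mathcal{P}_2 = \mathcal{P} \cup E_2$, so the natural strategy is the Maynard–Tao multidimensional sieve adapted so that the weight function detects membership in $\mathcal{P}_2$ rather than just $\mathcal{P}$. Let me think about how this goes.

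First, set up the standard Maynard–Tao machinery. Fix an admissible tuple $\mathcal{H} = \{h_1, \ldots, h_6\}$ with $k=6$. For a smooth function $F$ supported on the simplex $\{(t_1,\ldots,t_k) : t_i \geq 0, \sum t_i \leq 1\}$, define the sieve weights
$$w_n = \left( \sum_{d_i \mid n+h_i \,\forall i} \lambda_{d_1,\ldots,d_k} \right)^2$$
where $\lambda_{d_1,\ldots,d_k}$ is built from $F$ in the usual way (via a divisor-closed, squarefree, coprimality-supported transform, with $d := \prod d_i < R = N^{\theta/2 - \delta}$). Then consider the sum
$$S = \sum_{N < n \leq 2N,\; n \equiv v_0 \,(\mathrm{mod}\, W)} \left( \sum_{i=1}^{6} \chi_{\mathcal{P}_2}(n+h_i) - m \right) w_n$$
for $m=2$, where $W = \prod_{p \leq D_0} p$ is the usual $W$-trick modulus and $v_0$ is chosen so that $(v_0 + h_i, W) = 1$ for all $i$. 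If $S > 0$ for infinitely many $N$, then for infinitely many $n$ at least $m+1 = 3$ of the $n+h_i$ lie in $\mathcal{P}_2$.

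Second, evaluate the two pieces. The "main sum" $\sum_n w_n$ is exactly the classical Maynard integral: $\sum w_n \sim \frac{(\text{const})\, N (\log R)^k}{W \cdot \varphi(W)/W \cdots} I_k(F)$ where $I_k(F) = \int F^2$. For each $i$, the sum $\sum_{n} \chi_{\mathcal{P}_2}(n+h_i) w_n$ splits as $\sum_n \chi_{\mathcal{P}}(n+h_i) w_n + \sum_n \beta(n+h_i) w_n$ (after arranging, via the choice of $\eta < 1/4$ and $Y = N^\eta$, that the $E_2$-contribution counted by $\beta$ is disjoint from the primes and that $\beta$ picks out genuine $E_2$-numbers in $(N, 2N]$). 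The prime piece is standard and contributes $\sim c \cdot \frac{N(\log R)^{k+1}}{\log N} J_k^{(i)}(F)$ using $BV[\theta, \mathcal{P}]$, with $J_k^{(i)}(F) = \int \left(\int F\, dt_i\right)^2$. The $\beta$-piece is the new ingredient: one needs an asymptotic for $\sum_{n} \beta(n+h_i) w_n$ of the same shape, with a constant reflecting the density of $E_2$-numbers (which is $\sim \frac{\log\log N}{\log N}$, but restricted by the condition $Y < p_1 \leq N^{1/2}$ the relevant density becomes a constant multiple $\log(1/(2\eta))$ of $\frac{1}{\log N}$ after the dyadic decomposition). This evaluation should be carried out earlier in the paper (it is exactly what Hypotheses 1 and 2 are for) and here I would simply cite it.

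Third, optimize. Collecting terms, the positivity of $S$ reduces to showing that for $k=6$ there exists an admissible $F$ on the simplex with
$$\frac{\sum_{i=1}^{6} \left( c_{\mathcal{P}} + c_\beta \right) J_k^{(i)}(F)}{I_k(F)} > m = 2,$$
i.e. a Maynard-type ratio $M_k = \sup_F \frac{\sum_i J_k^{(i)}(F)}{I_k(F)}$ exceeds $2/(c_{\mathcal{P}} + c_\beta) \cdot (\text{normalization})$. The key numeric fact is that $M_6 > 2$ already in Maynard's original work for primes alone; here the extra mass from the $\beta$-term makes the required threshold strictly smaller than what is needed for primes, so the same (or an easier) $F$ works. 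Concretely, I would reuse Maynard's explicit symmetric polynomial $F = \sum_j P_j(\sum_i t_i) \prod(\cdots)$ or the Polymath choices and verify, via the quadratic-form eigenvalue computation for $M_6$, that the threshold is met with room to spare.

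**The main obstacle.** The genuinely new analytic work — which I would expect to be the crux and to have been done in an earlier section — is establishing the asymptotic $\sum_{n \equiv v_0(W)} \beta(n+h_i) \big(\sum \lambda_d\big)^2 \sim (\text{explicit const}) \cdot \frac{N (\log R)^{k+1}}{\log N}\, J_k^{(i)}(F)$ uniformly enough, which rests on $BV[\theta, E_2]$ (Motohashi's theorem for $\theta = 1/2$) applied after expanding the square and swapping orders of summation, exactly paralleling the prime case but with the added subtlety that $\beta$ is a bilinear (Type II) object restricted to the range $Y < p_1 \leq N^{1/2}$; controlling the error terms requires $\eta$ bounded away from $0$ and $1/4$, and tracking the $\eta$-dependence through the constant $c_\beta$. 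Once that asymptotic is in hand, the $k=6$, $m=2$ case is a finite numerical verification of the same flavour as Maynard's $k=105$, $m=1$ computation, only much smaller, and Theorem 1.2 follows.
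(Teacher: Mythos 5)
Your high-level strategy is the right one — Maynard–Tao weights, a linear form $S'=\sum_n\bigl(\sum_i\chi_{\mathcal{P}_2}(n+h_i)-2\bigr)w_n$, split $\chi_{\mathcal{P}_2}$ into the prime part and a $\beta$-detector for $E_2$-numbers, invoke $BV[\theta,\mathcal{P}]$ and $BV[\theta,E_2]$ with $\theta=1/2$, and close with a numerical check at $k=6$. But there is a real gap at the crux, which is precisely the step you wave off by saying ``this evaluation should be carried out earlier in the paper... here I would simply cite it.''

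You assert that $\sum_n \beta(n+h_m)w_n$ has the same shape as the prime sum, $\sim c_\beta\cdot \frac{N(\log R)^{k+1}}{\log N}J_k^{(m)}(F)$, so that the positivity criterion collapses to $(c_{\mathcal{P}}+c_\beta)\sum_m J_k^{(m)}(F)/I_k(F)>\rho\cdot(\text{normalization})$. That is not what happens. In the prime case, since $n+h_m$ is prime and the $\lambda$-support forces $d_m e_m\leq R^2<N$, the only surviving term in the $m$-th coordinate is $d_m=e_m=1$. For $\beta(n+h_m)\neq 0$ one has $n+h_m=p_1p_2$ with $Y<p_1\leq N^{1/2}$, and $p_1\leq R$, so $d_m$ (or $e_m$) may equal $p_1$. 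This forces the decomposition into four subcases — $(d_m,e_m)\in\{(p,1),(1,p),(1,1),(p,p)\}$ — and the first, second and fourth produce genuinely new integral functionals $L_{k,\delta}^{(m)}(F)$ and $M_{k,\delta}^{(m)}(F)$ (built from a $p$-deformed test function $F_{m,\delta}(\cdot\,;\xi)$ averaged over $\xi\in[\eta,\theta/2-\delta]$) that cannot be expressed as multiples of $J_k^{(m)}(F)$. Only the $(1,1)$ subcase gives the $J_k^{(m)}(F)$ term you wrote, with coefficient $\log\frac{1-\eta}{\eta}$.

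This matters for the logic, not just the bookkeeping: the resulting positivity criterion is
\[
-\theta\sum_{m}L_{k,0}^{(m)}(F)+\frac{\theta^2}{4}\Bigl(1+\log\tfrac{1-\eta}{\eta}\Bigr)\sum_{m}J_k^{(m)}(F)+\sum_{m}M_{k,0}^{(m)}(F)-\frac{\rho\theta}{2}I_k(F)>0,
\]
and the $L$-term enters with a \emph{negative} sign, so the $\beta$-contribution is not pure gain. Your heuristic that ``the extra mass from the $\beta$-term makes the required threshold strictly smaller, so the same or an easier $F$ works'' is therefore not justified. The paper does not reuse Maynard's or Polymath's choice of $F$: it constructs a specific degree-$3$ symmetric polynomial in the power sums $P_1,P_2,P_3$, takes $\eta=10^{-10}$, computes all four functionals $I_6,J_6^{(m)},L_{6,0}^{(m)},M_{6,0}^{(m)}$ numerically, and verifies that the full combination above is positive (about $8\times 10^{-8}$). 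Without the $L$ and $M$ functionals in your criterion, and without a concrete $F$ and a concrete numerical check, the ``finite verification'' you allude to has not actually been reduced to anything checkable. You would need to (i) derive the correct four-term positivity criterion including $L$ and $M$, and (ii) exhibit an $F$ with the required numerics.
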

The set ${\cal H}=\{0,4,6,10,12,16 \}$ is an admissible set with six elements. Hence if we denote by $r_{n}$ the $n$-th element of ${\cal P}_{2}={\cal P}\cup {\cal E}_{2}$, unconditionally we have
\begin{equation}
\label{p2}
\liminf _{n\to \infty} (r_{n+2}-r_{n}) \leq 16.
\end{equation}
If we assume the Elliott-Halberstam conjecture for both ${\cal P}$ and ${\cal E}_{2}$, far stronger results can be obtained: 
\begin{thm}
Assume the Elliott-Halberstam conjecture for ${\cal P}$ and ${\cal E}_{2}$. Then, there exist infinitely many $n$ such that all of $n, n+2, n+6$ are in ${\cal P}_{2}$. In particular, 
\begin{equation}
\label{p2'}
\liminf _{n\to \infty} (r_{n+2}-r_{n}) \leq 6.
\end{equation}
\end{thm}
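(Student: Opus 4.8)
The plan is to apply the Maynard--Tao sieve with $k=3$ to the admissible triple $\mathcal{H}=\{0,2,6\}$ (it misses a residue class modulo $2$ and modulo $3$, and $|\mathcal{H}|<p$ for $p\ge 5$), detecting primes through $\mathbf{1}_{\mathcal{P}}$ and $E_{2}$-numbers through $\beta$ simultaneously. After the standard $W$-trick (fix $W=\prod_{p\le D_0}p$ and a residue $\nu_0$ with $(\nu_0+h_i,W)=1$ for $i=1,2,3$), one forms, with the nonnegative weights $w_n=\bigl(\sum_{d_i\mid n+h_i\,\forall i}\lambda_{d_1,d_2,d_3}\bigr)^2$ where $\lambda$ is parametrised in the usual way by a smooth function $F$ on the simplex $\{t_i\ge 0,\ t_1+t_2+t_3\le 1\}$, the sum
\begin{equation*}
S=\sum_{\substack{N<n\le 2N\\ n\equiv\nu_0\,(\mathrm{mod}\,W)}}\Bigl(\sum_{i=1}^{3}\bigl(\mathbf{1}_{\mathcal{P}}(n+h_i)+\beta(n+h_i)\bigr)-2\Bigr)w_n .
\end{equation*}
Because a prime is never a product of two primes, for each $i$ the two indicators are disjoint and $\mathbf{1}_{\mathcal{P}}(n+h_i)+\beta(n+h_i)\le\mathbf{1}_{\mathcal{P}_2}(n+h_i)\le 1$; hence $S>0$ forces the existence of some $n$ in the range with all of $n,n+2,n+6$ in $\mathcal{P}_2$. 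Running this over a suitable sequence $N\to\infty$ produces infinitely many such $n$, whence $\liminf_n(r_{n+2}-r_n)\le 6$.

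One then evaluates the three contributions by the asymptotic formulas developed in the earlier sections, with truncation parameter $R=N^{\theta/2-\delta}$. One has $\sum_n w_n\sim\mathfrak{c}\,N(\log R)^3 I_3(F)$; under $BV[\theta,\mathcal{P}]$, $\sum_{i}\sum_n w_n\mathbf{1}_{\mathcal{P}}(n+h_i)\sim\mathfrak{c}\,N(\log R)^3\cdot\frac{\theta}{2}\sum_{m=1}^{3} J_3^{(m)}(F)$; and, under $BV[\theta,E_2]$, the $E_2$-contribution is obtained by writing $n+h_i=p_1p_2$ with $p_1\in(N^{\eta},N^{1/2}]$, noting that (the relevant $d_i$ being squarefree and at most $R$) the $i$-th coordinate modulus collapses exactly as it does for primes, applying $BV[\theta,E_2]$ to the resulting congruence sum over $p_2$ uniformly in $p_1$, and finally summing over the small prime $p_1$, which yields an extra factor $C(\eta)=\sum_{N^{\eta}<p_1\le\min(N^{1/2},R)}p_1^{-1}+o(1)$, of size $\asymp\log(1/\eta)$; thus $\sum_{i}\sum_n w_n\beta(n+h_i)\sim\mathfrak{c}\,N(\log R)^3\cdot\frac{\theta}{2}C(\eta)\sum_{m=1}^{3}J_3^{(m)}(F)$. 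Consequently
\begin{equation*}
\frac{S}{\sum_n w_n}\longrightarrow\frac{\theta}{2}\bigl(1+C(\eta)\bigr)\frac{\sum_{m=1}^{3} J_3^{(m)}(F)}{I_3(F)}-2 .
\end{equation*}

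To conclude, invoke the Elliott--Halberstam conjecture for $\mathcal{P}$ and $E_2$, so $\theta=1$, and take the supremum over $F$: with $M_3=\sup_F\sum_{m}J_3^{(m)}(F)/I_3(F)$, a fixed positive quantity, the displayed limit is positive as soon as $\tfrac12\bigl(1+C(\eta)\bigr)M_3>2$. Since $C(\eta)\to\infty$ as $\eta\to0^{+}$ while $M_3$ does not vary, a sufficiently small choice of $\eta$ (permissible in the definition of $\beta$) makes this hold; then $S>0$ for all large $N$ and the theorem follows. Only the positivity of $M_3$ is needed, not its numerical value — the decisive input is the unbounded factor $C(\eta)$ from the range of the small prime, which under EH one may exploit to the fullest.

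The one genuinely substantial step is the $E_2$ asymptotic, i.e.\ justifying the constant $\frac{\theta}{2}C(\eta)\sum_m J_3^{(m)}(F)$: interchanging the $p_1$-sum with the bilinear sieve sums and controlling the $d_i=p_1$ terms, checking that the arising congruence sums over $p_2$ stay within the range covered by $BV[\theta,E_2]$ uniformly in $p_1$ (this is precisely where the full Elliott--Halberstam strength is used, and the reason the unconditional Theorem~1.2 yields only ``three of six''), and absorbing the $W$-trick and truncation errors. All of this is carried out in the general framework of the preceding sections, so for the present theorem it remains only to specialise $k=3$, $\mathcal{H}=\{0,2,6\}$, $\theta=1$, choose $\eta$ small, and verify the numerical inequality above.
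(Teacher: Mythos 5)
Your overall framework (Maynard--Tao sieve with $k=3$, $\mathcal{H}=\{0,2,6\}$, weights $w_n$ parametrised by a smooth $F$, detection of ${\cal P}$ via $S_1^{(m)}$ and $E_2$ via $\beta$, positivity of $S'(N,2)$) matches the paper's, but the computation of the $E_2$ contribution has a decisive gap, and the concluding limiting argument in $\eta$ is wrong.

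The gap is the claim that "the $i$-th coordinate modulus collapses exactly as it does for primes." For $S_1^{(m)}$ this is true: if $n+h_m$ is prime and $d_m\mid n+h_m$ with $d_m\le R<n+h_m$, then $d_m=1$. But when $n+h_m=p_1p_2$ with $Y<p_1\le N^{1/2}<p_2$, the sieve variable $d_m$ (which runs up to $R>Y$) can equal the small prime factor $p_1$. The paper's Section 2 decomposes $S_2^{(m)}$ into four types I--IV according to whether $d_m,e_m$ equal $1$ or a prime $p\in(Y,R]$, and types I, II, IV contribute at the main-term level, producing the quantities $L_{k,0}^{(m)}(F)$ (with a negative sign) and $M_{k,0}^{(m)}(F)$ in the leading coefficient \eqref{coeffofsdnrho}. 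Your asymptotic for $S/\!\sum_n w_n$ has only the $J$-terms and so silently assumes $L$ and $M$ vanish; for the test function actually needed here they do not (the paper finds $L_{3,0}^{(m)}(F)\approx0.1606$, $M_{3,0}^{(m)}(F)\approx0.0779$, both comparable in size to the $J$-terms), and the missing $-\theta\sum_m L_{3,0}^{(m)}(F)$ term is negative. Saying you will "control the $d_i=p_1$ terms" is not enough; they must be asymptotically evaluated, and they change the answer.

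The final step, that one can win simply by taking $\eta\to0^+$ since $C(\eta)\to\infty$, is also incorrect, and this is precisely why the omitted terms matter. The integrals defining $L_{k,0}^{(m)}(F)$ and $M_{k,0}^{(m)}(F)$ have the form $\int_\eta^{\theta/2}(\cdots)\,\tfrac{d\xi}{\xi}$, and since $L_k^{[m]}(\xi),M_k^{[m]}(\xi)\to J_k^{(m)}(F)$ as $\xi\to0^+$, one has $L_{k,0}^{(m)}(F)=\tfrac{\theta}{2}J_k^{(m)}(F)\log(1/\eta)+O(1)$ and $M_{k,0}^{(m)}(F)=\tfrac{\theta^2}{4}J_k^{(m)}(F)\log(1/\eta)+O(1)$. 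Substituting into \eqref{coeffofsdnrho}, the coefficient of $\log(1/\eta)$ is $\bigl(-\tfrac{\theta^2}{2}+\tfrac{\theta^2}{4}+\tfrac{\theta^2}{4}\bigr)\sum_m J_k^{(m)}(F)=0$: the divergent parts cancel identically. So the leading coefficient stays bounded as $\eta\to0^+$ and there is no free win from shrinking $\eta$; one must actually compute. The paper does exactly that, with $F(x,y,z)=(1-x)(1-y)(1-z)$ and $\eta=10^{-10}$, obtaining a margin of about $0.00204$ — razor-thin, and easily wiped out if the $L$-term is ignored or miscounted. You need to reinstate the I/II/IV contributions (Propositions~3.7 and 5.1), use the exact coefficient \eqref{coeffofsdnrho}, and verify positivity numerically for a concrete $F$.

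Two smaller inaccuracies worth noting: the constant arising from summing the small prime is $\log\frac{1-\eta}{\eta}$ (from Lemma~4.1), not $\sum_{Y<p_1\le\min(N^{1/2},R)}p_1^{-1}$; and the type-III sum over $p_1$ runs over the full range $(Y,N^{1/2}]$ dictated by $\beta$, whereas the range $(Y,R]$ is what matters for types I, II, IV.
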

We note that Maynard \cite{M2} unconditionally proved that $n(n+2)(n+6)$ has at most seven prime factors infinitely often. Theorem 1.3 is regarded as a (conditional) improvement of his theorem. Finally,
\begin{thm}
Assume the Elliott-Halberstam conjecture for ${\cal P}$ and ${\cal E}_{2}$. Then we have 
\begin{equation}
\label{thm4}
\liminf _{n\to \infty}(q_{n+2}-q_{n}) \leq 12.
\end{equation}
\end{thm}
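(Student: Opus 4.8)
The plan is to run the multidimensional Maynard--Tao sieve with the function $\beta$ playing the role that the indicator of the primes plays in the prime case, applied to a fixed admissible $5$-tuple. Take $\mathcal H=\{0,2,6,8,12\}$: it is admissible (it meets at most one residue class modulo $2$, two modulo $3$, four modulo $5$, and at most $5<p$ classes for $p\ge 7$), it has diameter $12$, and no admissible set of more than $5$ elements fits inside an interval of length $12$. Set $m=3$, $k=5$. For large $N$ let $W=\prod_{p\le D_0}p$ with $D_0\to\infty$ slowly, pick $\nu_0$ with $(\nu_0+h_i,W)=1$ for all $i$, put $R=N^{\theta/2-\delta}$, and define $w_n=\left(\sum_{d_i\mid n+h_i\ \forall i}\lambda_{d_1,\dots,d_k}\right)^2$ with the smooth weights $\lambda_{\mathbf d}$ attached, exactly as in the treatment of Theorems 1.1--1.3, to a fixed $F\ge 0$ supported on the simplex $\{t_i\ge 0,\ \sum_i t_i\le 1\}$ and supported on $\prod_i d_i<R$. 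I then study
\[
S=\sum_{\substack{N<n\le 2N\\ n\equiv\nu_0\ (\mathrm{mod}\ W)}}\left(\sum_{i=1}^{k}\beta(n+h_i)-(m-1)\right)w_n .
\]
If $S>0$ along a sequence $N\to\infty$, then for infinitely many $n$ at least $m=3$ of $n+h_1,\dots,n+h_k$ satisfy $\beta(\cdot)=1$, hence lie in $E_2$; as any three of them lie in an interval of length $12$, this forces $q_{j+2}-q_j\le 12$ for the corresponding index $j$, and letting $N\to\infty$ gives $\liminf_n(q_{n+2}-q_n)\le 12$.

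The two relevant main terms are evaluated as in the body of the paper. The total mass is combinatorial,
\[
S_1=\sum_n w_n\sim \frac{\varphi(W)^k}{W^{k+1}}\,N(\log R)^k\,I_k(F),\qquad I_k(F)=\int_{[0,1]^k}F^2 .
\]
For each $i$ one unfolds $\beta(n+h_i)$ by writing $n+h_i=p_1p_2$ with $Y<p_1\le N^{1/2}<p_2$ and summing over $p_1$ first; for fixed $p_1$ the inner sum is a count of primes $p_2$ in an arithmetic progression, whose error terms — once summed against the sieve weights and treated by Cauchy--Schwarz — are absorbed by $BV[\theta,E_2]$ and $BV[\theta,\mathcal P]$, since the moduli that occur stay below $R^2<N^{\theta-\epsilon}$. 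The difference from the prime case of Maynard--Tao is a single Mertens-type factor
\[
\sum_{Y<p_1\le N^{1/2}}\frac{1}{p_1\log(N/p_1)}\sim\frac{1}{\log N}\int_{\eta}^{1/2}\frac{du}{u(1-u)}=\frac{1}{\log N}\log\frac{1-\eta}{\eta},
\]
so that (schematically) $\sum_n\beta(n+h_i)w_n\sim\left(\log\frac{1-\eta}{\eta}\right)\frac{\varphi(W)^k}{W^{k+1}}\frac{N(\log R)^{k+1}}{\log N}J_k^{(i)}(F)$ with $J_k^{(i)}(F)=\int_{[0,1]^{k-1}}\left(\int_0^1 F\,dt_i\right)^2$. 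With $M_k=\sup_F\sum_{i=1}^k J_k^{(i)}(F)/I_k(F)$ and $\log R\sim(\theta/2)\log N$, positivity of $S$ reduces, under the Elliott--Halberstam conjecture ($\theta=1$), to an inequality of the shape $\frac12\left(\log\frac{1-\eta}{\eta}\right)M_k>m-1=2$.

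To finish, I take $k=5$ and use Maynard's explicit low-degree symmetric polynomial on the $5$-simplex \cite{M1} — the very one that yields $\liminf(p_{n+1}-p_n)\le 12$ on EH — for which $\sum_{i=1}^5 J_5^{(i)}(F)/I_5(F)>2$, so that $M_5>2$; then any fixed $\eta\in(0,1/4)$ small enough that $\log\frac{1-\eta}{\eta}>4/M_5$ (e.g.\ $\eta$ below roughly $1/(1+e^2)$) makes the displayed inequality hold, so $S>0$ for all large $N$ and the theorem follows. The genuinely delicate part, carried out in the earlier sections and only invoked here, is the evaluation of $\sum_n\beta(n+h_i)w_n$: one must check that after the splitting $n+h_i=p_1p_2$ every sieve modulus that appears remains $<N^{\theta-\epsilon}$ so that the two Bombieri--Vinogradov hypotheses apply, and — the real obstacle — one must isolate the contribution of those primes $p_1$ lying below the sieve level $R$, where $p_1$ itself can occur as one of the divisors $d_i$ of $n+h_i$, so that $w_n$ restricted to such $n$ is no longer a single clean square; this ``diagonal'' contribution, which carries a sign through $\mu(p_1)$, has to be shown not to overwhelm the main term, and it is what pins down the precise admissible range of $\eta$ and forces $k$ to be taken as large as $5$.
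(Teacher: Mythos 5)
Your plan — admissible $5$-tuple $\{0,2,6,8,12\}$, Maynard--Tao weights, $S=\sum(\sum_i\beta(n+h_i)-2)w_n$, show $S>0$ — matches the paper's framework, but the final positivity reduction is wrong in a way that breaks the whole argument.

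You reduce positivity to $\tfrac12\bigl(\log\tfrac{1-\eta}{\eta}\bigr)M_k>2$, treating the ``diagonal'' contributions (where $d_m$ or $e_m$ equals the small prime factor $p_1$ of $n+h_m$, $p_1\in(Y,R]$) as something to be merely ``shown not to overwhelm.'' In the paper these are not error terms: $S_2^{(m)}$ splits as $S_{2,I}^{(m)}+S_{2,II}^{(m)}+S_{2,III}^{(m)}+S_{2,IV}^{(m)}$ according to $(d_m,e_m)=(p,1),(1,p),(1,1),(p,p)$, and the types I, II, IV produce \emph{main} terms $-\theta L_{k,0}^{(m)}(F)$ (Proposition 3.7, negative) and $M_{k,0}^{(m)}(F)$ (Proposition 5.1, positive), of the same order $\varphi(W)^kN(\log N)^k/W^{k+1}$ as the type-III $J$ term and as $S_0$. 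The correct positivity condition is the full four-term expression (\ref{coeffofsnrho}):
\begin{equation*}
-\theta\sum_{m}L_{k,0}^{(m)}(F)+\frac{\theta^2}{4}\Bigl(\log\frac{1-\eta}{\eta}\Bigr)\sum_m J_k^{(m)}(F)+\sum_m M_{k,0}^{(m)}(F)-\frac{\rho\theta}{2}I_k(F)>0,
\end{equation*}
and with the paper's $F$ and $\eta=10^{-10}$ the negative $L$-term alone ($\approx 0.0196$) exceeds the $J$-term gain ($\approx 0.0111$); only after adding the $M$-term ($\approx 0.0095$) does the total come out positive, by about $2\times 10^{-6}$. Dropping $L$ and $M$ would give the wrong (far too optimistic) criterion.

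Worse, your escape hatch — ``take $\eta$ small enough that $\log\frac{1-\eta}{\eta}$ is as large as you like'' — cannot work. Both $L_{k,0}^{(m)}(F)=\int_{\eta}^{\theta/2}\frac{\theta/2-\xi}{(1-\xi)\xi}L_{k,0}^{[m]}(\xi)\,d\xi$ and $M_{k,0}^{(m)}(F)=\int_{\eta}^{\theta/2}\frac{(\theta/2-\xi)^2}{(1-\xi)\xi}M_{k,0}^{[m]}(\xi)\,d\xi$ diverge like $\log(1/\eta)$ as $\eta\to 0$, because $L_{k,0}^{[m]}(0)=M_{k,0}^{[m]}(0)=J_k^{(m)}(F)$ and the $d\xi/\xi$ is logarithmically singular at $0$. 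In fact $L_{k,0}^{(m)}(F)=\tfrac{\theta}{2}J_k^{(m)}(F)\log(1/\eta)+O(1)$ and $M_{k,0}^{(m)}(F)=\tfrac{\theta^2}{4}J_k^{(m)}(F)\log(1/\eta)+O(1)$, so the $\log(1/\eta)$ coefficient in (\ref{coeffofsnrho}) is $\bigl(-\tfrac{\theta^2}{2}+\tfrac{\theta^2}{4}+\tfrac{\theta^2}{4}\bigr)\sum_m J_k^{(m)}=0$: the blow-ups cancel, the coefficient stays bounded as $\eta\to 0$, and there is no free $\log(1/\eta)$ to exploit. This is why the paper must work with a specific degree-three symmetric polynomial (not Maynard's $M_5$-extremizer), fix $\eta=10^{-10}$, numerically evaluate $I_5$, $J_5^{(m)}$, $L_{5,0}^{(m)}$, $M_{5,0}^{(m)}$, and verify the razor-thin inequality. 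Your proposal, as written, omits the computation that is the whole content of the theorem.
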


\section{Notation and preparations for the proofs}
Let ${\cal H}=\{h_{1}, \cdots , h_{k}\}$ be an admissible set. Throughout this paper, we assume that the elements of ${\cal H}$ are bounded, that is, there exists a positive constant $C=C_{k}$ depending only on $k$ such that $h_{i}\leq C$ holds for $i=1, \cdots ,k$.  We denote by $\chi _{{\cal P}}$  the characteristic function of ${\cal P}$. We put 
\[
D_{0}=\log \log \log N,\quad  W=\prod _{p\leq D_{0}}p \ll (\log \log N)^{2}.
\]
We assume that both prime numbers and $E_{2}$-numbers have level of distribution $\theta$. By the Chinese remainder theorem, we can choose $\nu _{0}\in \mathbf{N}$ so that all of $\nu _{0}+h_{i}$ ($i=1, \cdots ,k$) are coprime to $W$. For a smooth function $F: \mathbf{R}^{k}\to \mathbf{R}$ supported in
\[
{\cal R}_{k}:=\{ (x_{1}, \cdots ,x_{k})\; | \; x_{1}, \cdots , x_{k}\geq 0, \sum _{i=1}^{k}x_{i}\leq 1 \},
\]
we put
\begin{equation}
\label{3}
\lambda _{d_{1},\cdots ,d_{k}}=\left( \prod _{i=1}^{k}\mu (d_{i})d_{i}\right) \underset{(r_{i},W)=1 (\forall i)}{\underset{d_{i}|r_{i} (\forall i)}{\sum _{r_{1}, \cdots ,r_{k}}}}\frac{\mu (\prod _{i=1}^{k}r_{i})^{2}}{\prod _{i=1}^{k}\varphi (r_{i})}F\left( \frac{\log r_{1}}{\log R}, \cdots ,\frac{\log r_{k}}{\log R} \right)
\end{equation}
if $(d_{1}, \cdots ,d_{k})$ satisfies the conditions that $\prod _{i=1}^{k}d_{i}$ is square-free, $\prod _{i=1}^{k}d_{i} < R$, and $(d_{i},W)=1$ for $i=1, \cdots ,k$, where $R=N^{\frac{\theta}{2}-\delta}$ and $\delta$ is a sufficiently small positive constant. If $(d_{1}, \cdots ,d_{k})$ does not satisfy at least one of these conditions, put $\lambda _{d_{1}, \cdots ,d_{k}}:=0$.  We define the weight $w_{n}$ by 
\begin{equation}
\label{2}
w_{n}=\left( \sum _{d_{i}|n+h_{i} (\forall i)}\lambda _{d_{1},\cdots ,d_{k}}\right)^{2}.
\end{equation}
To find small gaps between $E_{2}$-numbers, for a natural number $\rho$, we consider the sum 
\begin{equation}
\label{1}
S(N, \rho)=\underset{n\equiv \nu _{0} (\mathrm{mod}\; W)}{\sum _{N\leq n<2N}}\left( \sum _{m=1}^{k}\beta (n+h_{m})-\rho \right)w_{n}.
\end{equation}
If $S(N, \rho)$ becomes positive for any sufficiently large $N$, there exists $n\in [N, 2N)$ such that at least $\rho +1$ of $n+h_{1}, \cdots ,n+h_{k}$ are $E_{2}$-numbers. Hence one has
\[
\liminf_{n\to \infty}(q_{n+\rho}-q_{n})\leq \max _{1\leq i<j\leq k}|h_{j}-h_{i}|.
\]
 Similarly, to find small gaps between the set of primes and $E_{2}$-numbers, for a natural number $\rho$,  we consider the sum 
\begin{equation}
\label{1'}
S^{'}(N, \rho)=\underset{n\equiv \nu _{0} (\mathrm{mod}\; W)}{\sum _{N\leq n<2N}}\left( \sum _{m=1}^{k}\left(\beta (n+h_{m})+\chi _{{\cal P}}(n+h_{m})\right)-\rho \right)w_{n}.
\end{equation}
If $S^{'}(N, \rho)$ becomes positive for any sufficiently large $N$, there exists $n\in [N, 2N)$ such that at least $\rho +1$ of $n+h_{1}, \cdots ,n+h_{k}$ are in ${\cal P}\cup {\cal E}_{2}$. Hence our problem is to evaluate the sums
\begin{equation}
\label{4}
S_{0}=\underset{n\equiv \nu _{0} (\mathrm{mod}\; W)}{\sum _{N\leq n<2N}}w_{n}, \quad S_{1}^{(m)}=\underset{n\equiv \nu _{0} (\mathrm{mod} W)}{\sum _{N\leq n<2N}}\chi _{{\cal P}}(n+h_{m})w_{n},
\end{equation}
and 
\begin{equation}
\label{5}
S_{2}^{(m)}=\underset{n\equiv \nu _{0} (\mathrm{mod}\; W)}{\sum _{N\leq n<2N}}\beta (n+h_{m})w_{n}
\end{equation}
for $m=1, \cdots ,k$. Maynard (\cite{M1}, Proposition 4.1) computed the sums in (\ref{4}). The results are as follows:
\begin{prop}
We put 
\[
I_{k}(F)=\int _{0}^{1}\cdots \int _{0}^{1}F(t_{1}, \cdots ,t_{k})^{2}dt_{1}\cdots dt_{k},
\]
\[
J_{k}^{(m)}(F)=\int _{0}^{1}\cdots \int _{0}^{1}\left( \int _{0}^{1}F(t_{1},\cdots ,t_{k})dt_{m} \right)^{2}dt_{1}\cdots dt_{m-1}dt_{m+1}\cdots dt_{k}
\]
for $m=1,\cdots ,k$. Then, if $I_{k}(F)\neq 0$, we have 
\begin{equation}
\label{7}
S_{0}=\frac{(1+o(1))\varphi (W)^{k}N(\log R)^{k}}{W^{k+1}}I_{k}(F),
\end{equation}
and if $J_{k}^{(m)}(F)\neq 0$, we have
\begin{equation}
\label{7.5}
S_{1}^{(m)}=\frac{(1+o(1))\varphi (W)^{k}N(\log R)^{k+1}}{W^{k+1}\log N}J_{k}^{(m)}(F) \quad (m=1, \cdots ,k)
\end{equation}
as $N\to \infty$. 
\end{prop}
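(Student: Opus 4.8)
This is precisely the computation of \cite{M1}, Proposition~4.1, transplanted to the present $W$-trick, so the plan is to reproduce that argument; I indicate only the main steps. For $S_{0}$, first expand the square in $w_{n}=\big(\sum_{d_{i}\mid n+h_{i}}\lambda_{d_{1},\dots,d_{k}}\big)^{2}$ and interchange the order of summation to get
\[
S_{0}=\sum_{d_{1},\dots,d_{k}}\sum_{e_{1},\dots,e_{k}}\lambda_{d_{1},\dots,d_{k}}\lambda_{e_{1},\dots,e_{k}}\sum_{\substack{N\le n<2N,\ n\equiv\nu_{0}\,(\mathrm{mod}\,W)\\ [d_{i},e_{i}]\mid n+h_{i}\ (\forall i)}}1 .
\]
Since $\lambda_{d_{1},\dots,d_{k}}$ (and likewise for the $e_{i}$) vanishes unless $\prod_{i}d_{i}$ is squarefree and coprime to $W$, and since the $h_{i}$ are distinct with every prime factor of $h_{i}-h_{j}$ dividing $W$, the inner sum is empty unless the moduli $W,[d_{1},e_{1}],\dots,[d_{k},e_{k}]$ are pairwise coprime; when they are, the Chinese Remainder Theorem shows it equals $N/\big(W\prod_{i}[d_{i},e_{i}]\big)+O(1)$. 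The total contribution of the $O(1)$ terms is $\ll\big(\sum_{d_{1},\dots,d_{k}}|\lambda_{d_{1},\dots,d_{k}}|\big)^{2}\ll R^{2}(\log N)^{O(1)}$, using the crude bound $|\lambda_{d_{1},\dots,d_{k}}|\ll(\log N)^{O(1)}$ and that $\lambda$ is supported on $\prod_{i}d_{i}\le R$; as $R=N^{\theta/2-\delta}$ and $W\ll(\log\log N)^{2}$, this is $o\big(\varphi(W)^{k}N(\log R)^{k}/W^{k+1}\big)$, hence negligible.

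It remains to evaluate the main term $\tfrac{N}{W}\sum_{d_{1},\dots,d_{k}}\sum_{e_{1},\dots,e_{k}}\lambda_{d_{1},\dots,d_{k}}\lambda_{e_{1},\dots,e_{k}}/\prod_{i}[d_{i},e_{i}]$. Substituting the definition (\ref{3}) of $\lambda$, interchanging the order of summation and carrying out the multiplicative-function manipulations of \cite{M1} (Lemma~5.1), the quadratic form is transformed into a diagonal one of the shape $\sum_{r_{1},\dots,r_{k}}y_{r_{1},\dots,r_{k}}^{2}/\prod_{i}g(r_{i})$, where $g$ is the multiplicative function with $g(p)=p-2$ for $p\nmid W$ and $y_{r_{1},\dots,r_{k}}$ agrees with $F\big(\tfrac{\log r_{1}}{\log R},\dots,\tfrac{\log r_{k}}{\log R}\big)$ up to a fixed multiplicative normalization and a negligible error. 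Evaluating this diagonal sum by the standard estimate for sums of multiplicative functions against a smooth cutoff (\cite{M1}, Lemmas~6.1--6.2) produces the factor $(\log R)^{k}I_{k}(F)$, while the local densities at the primes $p>D_{0}$ supply the arithmetic constant; combined with the $N/W$ in front this gives (\ref{7}) (the hypothesis $I_{k}(F)\ne0$ only serves to make the $(1+o(1))$ meaningful, and similarly for $J_{k}^{(m)}(F)$ below).

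For $S_{1}^{(m)}$ the same expansion applies, but now the inner sum runs over $n$ with $n+h_{m}$ prime. Since $d_{m}\mid n+h_{m}$, $N\le n+h_{m}<2N+O(1)$ and $d_{m}\le R=N^{\theta/2-\delta}$, a nonzero contribution forces $d_{m}=e_{m}=1$. The inner sum then counts primes in $[N,2N)$ in a fixed residue class $a$ with $(a,q)=1$ modulo $q:=W\prod_{i\ne m}[d_{i},e_{i}]$ (again using the pairwise coprimality of the moduli and that $\nu_{0}+h_{m}$ is coprime to $W$), so it equals $\pi^{\flat}(N;q,a)+O(1)=\pi^{\flat}(N)/\varphi(q)+\big(\pi^{\flat}(N;q,a)-\pi^{\flat}(N)/\varphi(q)\big)+O(1)$. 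Here $q\le WR^{2}=WN^{\theta-2\delta}\le N^{\theta-\epsilon}$ for any $\epsilon<2\delta$, and a given squarefree $q$ arises from $\ll_{k}\tau(q)^{O_{k}(1)}$ tuples $(d_{1},\dots,d_{k},e_{1},\dots,e_{k})$, so a Cauchy--Schwarz and divisor-bound argument reduces the total error to $(\log N)^{O(1)}\sum_{q\le N^{\theta-\epsilon}}\mu^{2}(q)\max_{(a,q)=1}\big|\pi^{\flat}(N;q,a)-\pi^{\flat}(N)/\varphi(q)\big|$, which is $\ll_{A}N/\log^{A}N$ by Hypothesis $BV[\theta,{\cal P}]$ and hence negligible. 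The main term is $\tfrac{\pi^{\flat}(N)}{\varphi(W)}\sum_{d_{m}=e_{m}=1}\lambda_{d_{1},\dots,d_{k}}\lambda_{e_{1},\dots,e_{k}}/\prod_{i\ne m}\varphi([d_{i},e_{i}])$, and this quadratic form diagonalizes exactly as before, except that freezing the $m$-th coordinate at $1$ amounts to integrating $F$ over its $m$-th variable, so $I_{k}(F)$ is replaced by $J_{k}^{(m)}(F)$; using $\pi^{\flat}(N)\sim N/\log N$ one obtains (\ref{7.5}).

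The step I expect to be the main obstacle is the bookkeeping in the diagonalization: checking that the substitution (\ref{3}) indeed diagonalizes the quadratic form, pinning down the exact multiplicative local factors, and --- for $S_{1}^{(m)}$ --- controlling the Bombieri--Vinogradov error uniformly over the family of moduli $W\prod_{i\ne m}[d_{i},e_{i}]$, which is exactly where the divisor-bounded multiplicity and the $\delta$ of room in $R=N^{\theta/2-\delta}$ are needed.
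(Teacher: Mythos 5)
Your sketch faithfully reproduces Maynard's proof of his Proposition 4.1, which is precisely what the paper invokes for this statement, so the approach matches. One small correction in the bookkeeping you flag as the main obstacle: the diagonalization for $S_{0}$ (Maynard's Lemma 5.1) yields $\sum_{r_{1},\dots,r_{k}} y_{r_{1},\dots,r_{k}}^{2}\big/\prod_{i}\varphi(r_{i})$ with Euler's $\varphi$, not the totally multiplicative $g$ with $g(p)=p-2$ that you write down — that $g$ appears only in the diagonalization of $S_{1}^{(m)}$ (Maynard's Lemma 5.2), and also of $S_{2}^{(m)}$ later in this paper; the structure of the argument and the resulting asymptotics are unaffected.
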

Hence the main problem of this paper is the computation of $S_{2}^{(m)}$. By substituting (\ref{2}) into (\ref{5}) and interchanging the summations, we have
\begin{equation}
\label{8}
S_{2}^{(m)}=\underset{e_{1},\cdots ,e_{k}}{\sum _{d_{1}, \cdots ,d_{k}}}\lambda _{d_{1}, \cdots ,d_{k}}\lambda _{e_{1}, \cdots ,e_{k}}\underset{[d_{i}, e_{i}]|n+h_{i} (\forall i)}{\underset{n\equiv \nu _{0} (\mathrm{mod}\; W)}{\sum _{N\leq n<2N}}}\beta (n+h_{m}).
\end{equation}
The integers $d_{m}$, $e_{m}$ must satisfy
\begin{equation}
\label{9}
d_{m}, e_{m} |n+h_{m}.
\end{equation}
Since $\beta (n+h_{m})=0$ unless $n+h_{m}=p_{1}p_{2}$, $Y<p_{1}\leq N^{\frac{1}{2}}<p_{2}$, and since $\lambda _{d_{1}, \cdots ,d_{k}}=0$ unless $\prod _{i=1}^{k}d_{i}< R<N^{\frac{1}{2}}$, only the following four types contribute to the sum above:\\
1) $d_{m}=p, e_{m}=1$ \quad $(Y<p< R)$, \\
2) $d_{m}=1, e_{m}=p$ \quad $(Y<p< R)$, \\
3) $d_{m}=e_{m}=1$, \\
4) $d_{m}=e_{m}=p$ \quad $(Y<p < R)$. \\
Correspondingly we decompose
\begin{equation}
\label{10}
S_{2}^{(m)}=S_{2,I}^{(m)}+S_{2, I\hspace{-.em}I}^{(m)}+S_{2, I\hspace{-.em}I\hspace{-.em}I}^{(m)}+S_{2, I\hspace{-.em}V}^{(m)}.
\end{equation}
The following three sections will be devoted to compute these terms. 

\section{The computation of $S_{2,I}^{(m)}, S_{2, I\hspace{-.em}I}^{(m)}$}
We first compute $S_{2,I}^{(m)}(= S_{2, I\hspace{-.em}I}^{(m)})$. By interchanging the summations, we have
\begin{equation}
\label{11}
S_{2,I}^{(m)}=\sum _{Y<p < R}\underset{d_{m}=p, e_{m}=1}{\underset{e_{1},\cdots ,e_{k}}{\sum _{d_{1}, \cdots ,d_{k}}}}\lambda _{d_{1}, \cdots ,d_{k}}\lambda _{e_{1}, \cdots ,e_{k}}\underset{[d_{i}, e_{i}]|n+h_{i} (\forall i)}{\underset{n\equiv \nu _{0} (\mathrm{mod}\; W)}{\sum _{N\leq n<2N}}}\beta (n+h_{m}).
\end{equation}
By our choice of $\nu _{0}$ and the assumption that the elements of ${\cal H}$ are bounded, the inner sum is empty if $W, [d_{1},e_{1}], \cdots ,[d_{k},e_{k}]$ are not pairwise coprime.  We put 
\[
q=W\prod _{i=1}^{k}[d_{i}, e_{i}].
\]
When $W, [d_{1},e_{1}], \cdots ,[d_{k},e_{k}]$ are pairwise coprime, the sum over $n$ in (\ref{11}) is rewritten as a sum over a single residue class modulo $q$. That is, there exists a unique $\nu$ (mod $q$) such that $\nu \equiv \nu _{0}$ (mod $W$), $\nu +h_{i}\equiv 0$ (mod $[d_{i},e_{i}]$) and 
\begin{equation}
\label{12}
\underset{[d_{i}, e_{i}]|n+h_{i} (\forall i)}{\underset{n\equiv \nu _{0} (\mathrm{mod}\; W)}{\sum _{N\leq n<2N}}}\beta (n+h_{m})=\underset{ }{\underset{n\equiv \nu  (\mathrm{mod}\; q)}{\sum _{N\leq n<2N}}}\beta (n+h_{m})
\end{equation}
holds. We put $\nu _{m}=\nu +h_{m}$. Then, 
\begin{equation}
\label{p}
(\nu _{m}, q)=p.
\end{equation}
We will check this briefly. Since $[d_{m}, e_{m}]=p$, $p$ divides $q$, and since $\nu _{m}=\nu +h_{m}\equiv 0$ (mod $[d_{m}, e_{m}]$), $p$ divides $\nu _{m}$. Clearly, $p^{2}$ does not divide $q$. Let $p^{'}$ be another prime and assume that $p^{'}|(\nu _{m}, W)$. Since $p^{'}|\nu _{m}$, we have $\nu +h_{m}\equiv 0$ (mod $p^{'}$). Since $p^{'}|W$, we have  $\nu \equiv \nu _{0}$ (mod $p^{'}$). Therefore, we find that $\nu _{0} +h_{m}\equiv 0$ (mod $p^{'}$), hence $p^{'}|(\nu _{0} +h_{m}, W)$. This fact contradicts our assumption that $\nu _{0} +h_{m}$ is coprime to $W$. If $p^{'}|(\nu _{m}, d_{j})$ for some $j\neq m$, then $\nu +h_{m}\equiv 0$ (mod $p^{'}$) and $\nu \equiv -h_{j}$ (mod $p^{'}$). Hence $h_{m}\equiv h_{j}$ (mod $p^{'}$). However, since $d_{j}$ is coprime to $W$, the condition $p^{'}|d_{j}$ implies $p^{'}\geq \log \log \log N$. Therefore, the conclusion that $h_{m}\equiv h_{j}$ (mod $p^{'}$) ($j\neq m$) contradicts our assumption that the elements of ${\cal H}$ are bounded. Hence $p^{'}$ does not divide $(\nu _{m}, d_{j})$. In a similar way, we find that $p^{'}$ does not divide $(\nu _{m}, e_{j})$. Thus we obtain (\ref{p}). 

Therefore, there exists a unique $\nu _{m}^{'}$ (mod $q/p$) such that $p\nu _{m}^{'}\equiv \nu _{m}$ (mod $q$) and the right hand side of (\ref{12}) becomes  
\begin{equation}
\label{13}
\begin{aligned}
\underset{ }{\underset{n\equiv \nu  (\mathrm{mod}\; q)}{\sum _{N\leq n<2N}}}\beta (n+h_{m})&=\underset{ }{\underset{n\equiv \nu _{m}  (\mathrm{mod}\; q)}{\sum _{N\leq n<2N}}}\beta (n)+O(1) \\
&= \underset{ }{\underset{n^{'}\equiv \nu _{m}^{'}  (\mathrm{mod}\; \frac{q}{p})}{\sum _{\frac{N}{p}\leq n^{'}<\frac{2N}{p}}}}\beta (pn^{'})+O(1) \\
&= \underset{ }{\underset{n^{'}\equiv \nu _{m}^{'}  (\mathrm{mod}\; \frac{q}{p})}{\sum _{\frac{N}{p}\leq n^{'}<\frac{2N}{p}}}}\chi _{{\cal P}}(n^{'})+O(1).
\end{aligned}
\end{equation}
We note that $(\nu _{m}^{'}, q/p)=1$, by (\ref{p}). The sum in the right hand side of (\ref{13}) becomes 
\begin{equation}
\label{14}
\begin{aligned}
\underset{ }{\underset{n^{'}\equiv \nu _{m}^{'}  (\mathrm{mod}\; \frac{q}{p})}{\sum _{\frac{N}{p}\leq n^{'}<\frac{2N}{p}}}}\chi _{{\cal P}}(n^{'})&=\frac{1}{\varphi \left(\frac{q}{p}\right)}\sum _{\frac{N}{p}\leq n^{'}<\frac{2N}{p}}\chi _{{\cal P}}(n^{'})+\Delta \left( \frac{N}{p}; \frac{q}{p}, \nu _{m}^{'} \right) \\
&=\frac{1}{\varphi \left(\frac{q}{p}\right)}\pi ^{\flat}\left( \frac{N}{p}\right) +\Delta \left( \frac{N}{p}; \frac{q}{p}, \nu _{m}^{'} \right),
\end{aligned}
\end{equation} 
where 
\[
\Delta (N; q,a)=\underset{n\equiv a (\mathrm{mod}\; q)}{\sum _{N \leq n<2N}}\chi _{{\cal P}}(n)-\frac{\pi ^{\flat}(N)}{\varphi (q)}.
\]
By combining (\ref{13}), (\ref{14}), we have
\begin{equation}
\label{15}
\underset{[d_{i}, e_{i}]|n+h_{i} (\forall i)}{\underset{n\equiv \nu _{0} (\mathrm{mod}\; W)}{\sum _{N\leq n<2N}}}\beta (n+h_{m})=\frac{\pi ^{\flat} \left( \frac{N}{p} \right) }{\varphi \left( \frac{q}{p} \right) } +\Delta \left( \frac{N}{p}; \frac{q}{p}, \nu _{m}^{'}\right)+O(1).
\end{equation}
By substituting  (\ref{15}) into (\ref{11}), we obtain
\begin{equation}
\label{16}
\begin{aligned}
S_{2,I}^{(m)}=& \frac{1}{\varphi (W)}\sum _{Y<p < R}\pi ^{\flat} \left( \frac{N}{p} \right)  \underset{d_{m}=p, e_{m}=1}{\underset{e_{1}, \cdots ,e_{k}}{\sum _{d_{1}, \cdots ,d_{k}}^{\quad\quad '}}}\frac{\lambda _{d_{1}, \cdots ,d_{k}}\lambda _{e_{1},\cdots ,e_{k}}}{\prod _{i\neq m}\varphi ([d_{i}, e_{i}])} \\
&\quad+O\left( \sum _{Y<p< R} \underset{d_{m}=p, e_{m}=1}{\underset{e_{1}, \cdots ,e_{k}}{\sum ^{\quad \;'} _{d_{1}, \cdots ,d_{k}}}}|\lambda _{d_{1}, \cdots ,d_{k}}\lambda _{e_{1}, \cdots ,e_{k}}| \left( \left| \Delta \left( \frac{N}{p}; \frac{q}{p}, \nu _{m}^{'} \right) \right| +1 \right)   \right), 
\end{aligned}
\end{equation}
where the sum $\sum ^{ '}$ indicates that $d_{1}, \cdots ,d_{k}, e_{1}, \cdots ,e_{k}$ are restricted to those satisfying the condition that $W, [d_{1}, e_{1}], \cdots ,[d_{k}, e_{k}]$ are pairwise coprime.
We now evaluate the  error term  of (\ref{16}). The conductor $q$ is square-free, and satisfies $q< R^{2}W$. Moreover, $p$ divides $q$. The number of pairs $(d_{1}, \cdots ,d_{k}, e_{1}, \cdots , e_{k})$  satisfying 
\[
q=W\prod _{i=1}^{k}[d_{i}, e_{i}]
\]
is at most $\tau _{3k}(q)$. Therefore, by the Cauchy-Schwarz inequality, we have
\begin{align*}
&\sum _{Y<p< R} \underset{d_{m}=p, e_{m}=1}{\underset{e_{1}, \cdots ,e_{k}}{\sum ^{\quad \; '} _{d_{1}, \cdots ,d_{k}}}}|\lambda _{d_{1}, \cdots ,d_{k}}\lambda _{e_{1}, \cdots ,e_{k}}| \left( \left| \Delta \left( \frac{N}{p}; \frac{q}{p}, \nu _{m}^{'} \right) \right| +1 \right) \\
& \quad \ll \lambda _{\mathrm{max}}^{2}\sum _{Y<p< R}\underset{p|q}{\sum _{q<R^{2}W}}\mu ^{2}(q) \tau _{3k}(q)\left(\left| \Delta \left( \frac{N}{p}; \frac{q}{p}, \nu _{m}^{'} \right) \right| +1 \right) \\
& \quad \ll \lambda _{\mathrm{max}}^{2}  \sum _{Y<p< R}\left( \sum _{q^{'}<\frac{R^{2}W}{p}}\mu ^{2}(pq^{'})\tau _{3k}(pq^{'})^{2}\frac{N}{p\varphi (q^{'})} \right)^{\frac{1}{2}}\left(\sum _{q^{'}<\frac{R^{2}W}{p}}\mu ^{2}(q^{'})\Delta ^{*}\left( \frac{N}{p}; q^{'} \right) \right)^{\frac{1}{2}}, 
\end{align*}
where 
\[
\lambda _{\mathrm{max}}=\sup _{d_{1}, \cdots ,d_{k}}|\lambda _{d_{1},  \cdots ,d_{k}}|, \quad
 \Delta ^{*}(N;q)=\max _{(a,q)=1}|\Delta (N; q,a)|.
\]
Under the assumption of $BV[\theta ,{\cal P}]$, the above is at most 
\begin{equation}
\label{17}
\begin{aligned}
&\ll _{A}\lambda _{\mathrm{max}}^{2}\sum _{Y<p< R}p^{-\frac{1}{2}}N^{\frac{1}{2}}(\log N)^{a_{k}}\cdot \frac{\left( \frac{N}{p} \right)^{\frac{1}{2}}}{(\log N)^{A} } \\
& \ll _{A} \lambda _{\mathrm{max}}^{2}N(\log N)^{a_{k}-A}\sum _{Y<p < R}\frac{1}{p}  \ll _{B}\frac{\lambda _{\mathrm{max}}^{2}N}{(\log N)^{B}},
\end{aligned}
\end{equation}
where $a_{k}$ is some positive integer depending only on $k$, and $A$ is an arbitrary positive number, and $B=A-a_{k}-1$. Here, we evaluated the first $q^{'}$-sum by a standard method.  Hence we may regard  $B$ as an arbitrary positive number, once $k$ is fixed. Combining (\ref{16}), (\ref{17}), we have
\begin{equation}
\label{18}
\begin{aligned}
S_{2, I}^{(m)}=&\frac{1}{\varphi (W)}\sum _{Y<p < R}(p-1) \pi ^{\flat}\left( \frac{N}{p} \right) \underset{d_{m}=p, e_{m}=1}{\underset{e_{1}, \cdots ,e_{k}}{\sum _{d_{1}, \cdots ,d_{k}}^{\quad\quad '}}}\frac{\lambda _{d_{1}, \cdots ,d_{k}}\lambda _{e_{1}, \cdots ,e_{k}}}{\prod _{i=1}^{k}\varphi ([d_{i}, e_{i}])} \\
& +O_{B}\left( \frac{\lambda _{\mathrm{max}}^{2}N}{(\log N)^{B}} \right).
\end{aligned}
\end{equation}
We used the fact that
\[
\frac{1}{\prod _{i\neq m}\varphi ([d_{i},e_{i}])}=\frac{p-1}{\prod _{i=1}^{k}\varphi ([d_{i},e_{i}])}
\]
for $d_{m}=p, e_{m}=1$. Next we compute the sum 
\[
\underset{d_{m}=p, e_{m}=1}{\underset{e_{1}, \cdots ,e_{k}}{\sum _{d_{1}, \cdots ,d_{k}}^{\quad\quad '}}}\frac{\lambda _{d_{1}, \cdots ,d_{k}}\lambda _{e_{1}, \cdots ,e_{k}}}{\prod _{i=1}^{k}\varphi ([d_{i}, e_{i}])} .
\]
Let $g$ be the totally multiplicative function defined by $g(q)=q-2$ for $q\in {\cal P}$. Then, when $d_{i}, e_{i}$ are square-free, we have
\[
\frac{1}{\varphi ([d_{i}, e_{i}])}=\frac{1}{\varphi (d_{i})\varphi (e_{i})}\sum _{u_{i}|d_{i}, e_{i}}g(u_{i}).
\]
Moreover, the condition that $(d_{i}, e_{j})=1$ ($i\neq j$) is replaced by multiplying $\sum _{s_{i,j}|d_{i}, e_{j}}\mu (s_{i,j})$. Since $\lambda _{d_{1}, \cdots ,d_{k}}=0$ unless $(d_{i}, d_{j})=1$ ($\forall i\neq j$), we may add the condition that  $s_{i,j}$ is coprime to $u_{i}, u_{j}, s_{i,a} (a\neq j), s_{b,j} (b\neq i)$. We denote by $\sum ^{*}$ the sum over $s_{1,2}, \cdots ,s_{k,k-1}$ restricted to those satisfying this condition. Then we have
\begin{equation}
\label{19}
\begin{aligned}
\underset{d_{m}=p, e_{m}=1}{\underset{e_{1}, \cdots ,e_{k}}{\sum _{d_{1}, \cdots ,d_{k}}^{\quad\quad '}}}\frac{\lambda _{d_{1}, \cdots ,d_{k}}\lambda _{e_{1}, \cdots ,e_{k}}}{\prod _{i=1}^{k}\varphi ([d_{i}, e_{i}])}=& \sum _{u_{1},\cdots ,u_{k}}\prod _{i=1}^{k}g(u_{i})\sum _{s_{1,2}, \cdots ,s_{k,k-1}}^{\quad\quad *}\left( \prod _{1\leq i\neq j\leq k}\mu (s_{i,j}) \right) \\
&\quad \quad \quad \quad \quad  \times \underset{d_{m}=p, e_{m}=1}{\underset{s_{i,j}|d_{i},e_{j} (i\neq j)}{\underset{u_{i}|d_{i}, e_{i} (\forall i)}{\underset{e_{1}, \cdots ,e_{k}}{\sum _{d_{1}, \cdots ,d_{k}}}}}}   \frac{\lambda _{d_{1}, \cdots ,d_{k}}\lambda _{e_{1}, \cdots ,e_{k}}}{\prod _{i=1}^{k}\varphi (d_{i})\varphi (e_{i})}.
\end{aligned}
\end{equation}
We put
\begin{equation}
\label{20}
y_{r_{1}, \cdots ,r_{k}}^{(m)}(p)=\left( \prod _{i=1}^{k}\mu (r_{i})g(r_{i})\right) \underset{d_{m}=p}{\underset{r_{i}|d_{i} (\forall i)}{\sum _{d_{1}, \cdots ,d_{k}}}}\frac{\lambda _{d_{1} ,\cdots ,d_{k}}}{\prod _{i=1}^{k}\varphi (d_{i})},
\end{equation}
\begin{equation}
\label{21}
y_{r_{1}, \cdots ,r_{k}}^{(m)}=\left( \prod _{i=1}^{k}\mu (r_{i})g(r_{i})\right) \underset{d_{m}=1}{\underset{r_{i}|d_{i} (\forall i)}{\sum _{d_{1}, \cdots ,d_{k}}}}\frac{\lambda _{d_{1} ,\cdots ,d_{k}}}{\prod _{i=1}^{k}\varphi (d_{i})},
\end{equation}
and $r:=\prod _{i=1}^{k}r_{i}$. Then, $y_{r_{1}, \cdots ,r_{k}}^{(m)}(p)=0$ unless $r$ is square-free, $(r,W)=1$, $r<R$, and $r_{m}=1$ or $p$. Similarly, $y_{r_{1}, \cdots ,r_{k}}^{(m)}=0$ unless $r$ is square-free, $(r,W)=1$, $r<R$, and $r_{m}=1$. Then the right hand side of (\ref{19}) is  expressed by 
\begin{equation}
\label{22}
\sum _{u_{1} ,\cdots ,u_{k}}\left( \prod _{i=1}^{k}\frac{\mu ^2(u_{i})}{g(u_{i})} \right)\sum _{s_{1,2}, \cdots ,s_{k,k-1}}^{\quad\quad *}\left( \prod _{1\leq i\neq j\leq k}\frac{\mu (s_{i,j})}{g(s_{i,j})^{2}} \right)y_{a_{1},\cdots ,a_{k}}^{(m)}(p)y_{b_{1}, \cdots ,b_{k}}^{(m)},
\end{equation}
where $a_{i}=u_{i}\prod _{j\neq i}s_{i,j}$, $b_{i}=u_{i}\prod _{j\neq i}s_{j,i}$. To obtain (\ref{22}), we used $\mu (a_{i})=\mu (u_{i})\prod _{j\neq i}\mu (s_{i,j})$, $\mu (b_{i})=\mu (u_{i})\prod _{j\neq i}\mu (s_{j,i})$, $g(a_{i})=g(u_{i})\prod _{j\neq i}g (s_{i,j})$ and $g(b_{i})=g(u_{i})\prod _{j\neq i}g(s_{j,i})$. Since $s_{i,j}$ is coprime to $u_{i}, u_{j}, s_{i,a} (a\neq j), s_{b,j} (b\neq i)$, these identities hold. We consider the contribution of the terms with $s_{i,j}\neq 1$ to (\ref{22}). By the condition of the support of $y_{r_{1},\cdots ,r_{k}}^{(m)}$, only the terms with $s_{i,j}=1$ or $s_{i,j}>D_{0}$ contribute to the sum above. Hence the contribution of the terms with $s_{i,j}\neq 1, a_{m}=1$ is at most 
\begin{equation}
\label{23}
\begin{aligned}
&y_{\mathrm{max}}^{(m)}(p)|_{r_{m}=1}\;y_{\mathrm{max}}^{(m)}\left( \sum _{\underset{(u,W)=1}{u<R}}\frac{\mu ^{2}(u)}{g(u)} \right)^{k-1}\left( \sum _{s} \frac{\mu ^{2}(s)}{g(s)^{2}} \right)^{k^{2}-k-1}\left( \sum _{s_{i,j}>D_{0}}\frac{\mu ^{2}(s_{i,j})}{g(s_{i,j})^{2}} \right) \\
&\quad \ll y_{\mathrm{max}}^{(m)}(p)|_{r_{m}=1}\;y_{\mathrm{max}}^{(m)}\left( \frac{\varphi (W)}{W}\log R \right)^{k-1}\cdot 1\cdot D_{0}^{-1} \\
&\quad \ll \frac{\varphi (W)^{k-1}(\log R)^{k-1}}{W^{k-1}D_{0}}y_{\mathrm{max}}^{(m)}(p)|_{r_{m}=1}\;y_{\mathrm{max}}^{(m)},
\end{aligned}
\end{equation}
where
\[
y_{\mathrm{max}}^{(m)}(p)|_{r_{m}=\mathfrak{p}}:=\underset{r_{m}=\mathfrak{p}}{\sup _{r_{1}, \cdots ,r_{k}}}| y_{r_{1}, \cdots ,r_{k}}^{(m)}(p)| \quad (\mathfrak{p}=1\; \mathrm{or} \; p), \quad 
y_{\mathrm{max}}^{(m)}:=\sup _{r_{1}, \cdots ,r_{k}}|y_{r_{1}, \cdots ,r_{k}}^{(m)}|.
\]
Similarly, the contribution of the terms with $s_{i,j}\neq 1$, $a_{m}=p$ is, since in this case $u_{m}$ or some $s_{m,j}$ is equal to $p$, at most
\begin{equation}
\label{24}
\frac{\varphi (W)^{k-1}(\log R)^{k-1}}{pW^{k-1}D_{0}}y_{\mathrm{max}}^{(m)}(p)|_{r_{m}=p}\;y_{\mathrm{max}}^{(m)}.
\end{equation}
Combining (\ref{19}), (\ref{22}),  (\ref{23}) and (\ref{24}), we have
\begin{equation}
\label{25}
\begin{aligned}
&\underset{d_{m}=p, e_{m}=1}{\underset{e_{1}, \cdots ,e_{k}}{\sum _{d_{1}, \cdots ,d_{k}}^{\quad\quad '}}}\frac{\lambda _{d_{1}, \cdots ,d_{k}}\lambda _{e_{1}, \cdots ,e_{k}}}{\prod _{i=1}^{k}\varphi ([d_{i}, e_{i}])}\\
&\quad = \sum _{u_{1}, \cdots ,u_{k}}\frac{y_{u_{1}, \cdots ,u_{k}}^{(m)}(p)y_{u_{1}, \cdots ,u_{k}}^{(m)}}{\prod _{i=1}^{k}g(u_{i})} \\
&\quad\quad +O\left( \frac{\varphi (W)^{k-1}(\log R)^{k-1}}{W^{k-1}D_{0}}\left( y_{\mathrm{max}}^{(m)}(p)|_{r_{m}=1}+\frac{y_{\mathrm{max}}^{(m)}(p)|_{r_{m}=p}}{p}\right) y_{\mathrm{max}}^{(m)} \right).
\end{aligned}
\end{equation}
(We note that we may remove $\mu ^{2} (u_{i})$ in (\ref{22}), since $y_{u_{1},\cdots ,u_{k}}^{(m)}=0$ unless $u_{1},\cdots ,u_{k}$ are all square-free.) We put 
\begin{equation}
\label{y}
y_{r_{1},\cdots ,r_{k}}=\left( \prod _{i=1}^{k}\mu (r_{i})g(r_{i})\right) \underset{r_{i}|d_{i} (\forall i)}{\sum _{d_{1}, \cdots ,d_{k}}}\frac{\lambda _{d_{1} ,\cdots ,d_{k}}}{\prod _{i=1}^{k}d_{i}}.
\end{equation}
Using the test function $F$, this is expressed by 
\begin{equation}
\label{yF}
y_{r_{1}, \cdots ,r_{k}}=F\left( \frac{\log r_{1}}{\log R}, \cdots ,\frac{\log r_{k}}{\log R} \right)
\end{equation}
(see \cite{M1}, p.400). It is proved in \cite{M1} that 
\[
\lambda _{\mathrm{max}}\ll y_{\mathrm{max}}(\log R)^{k},
\]
where 
\[
y_{\mathrm{max}}=\sup _{r_{1}, \cdots ,r_{k}}|y_{r_{1}, \cdots ,r_{k}}|.
\]
Hence the error term in (\ref{18}) is replaced by $y_{\mathrm{max}}^{2}N/(\log N)^{B}$. We substitute (\ref{25}) into (\ref{18}). Since 
\[
(p-1) \pi ^{\flat}\left( \frac{N}{p} \right)=\frac{N}{\log \frac{N}{p}}+O _{\eta}\left(  \frac{N}{(\log N )^{2}} \right)
\]
for $Y=N^{\eta}<p < R=N^{\frac{\theta}{2}-\delta}$,  we obtain the following result: 
\begin{lem}
Assume $BV[\theta ,{\cal P}]$ for $0<\theta \leq 1$. Then
\begin{equation}
\label{26}
\begin{aligned}
S_{2,I}^{(m)}=&\frac{N}{\varphi (W)}  \left( 1+O\left( \frac{1}{\log N} \right) \right)   \sum _{Y<p< R}\frac{1}{\log \frac{N}{p}}\sum _{u_{1}, \cdots ,u_{k}}\frac{y_{u_{1}, \cdots ,u_{k}}^{(m)}(p)y_{u_{1}, \cdots ,u_{k}}^{(m)}}{\prod _{i=1}^{k}g(u_{i})} \\
&+O\left( \frac{N\varphi (W)^{k-2}(\log N)^{k-2}}{W^{k-1}D_{0}}y_{\mathrm{max}}^{(m)}\sum _{Y<p< R}\left( y_{\mathrm{max}}^{(m)}(p)|_{r_{m}=1}+\frac{ y_{\mathrm{max}}^{(m)}(p)|_{r_{m}=p}}{p} \right) \right) \\
&+O_{B}\left( \frac{Ny_{\mathrm{max}}^{2}}{(\log N)^{B}}\right).
\end{aligned}
\end{equation}
\end{lem}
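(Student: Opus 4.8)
The plan is to start from the decomposition \eqref{11} of $S_{2,I}^{(m)}$ and push through the standard Maynard--Tao bookkeeping, with the one extra twist that the $E_2$-variable $\beta(n+h_m)$ forces us to divide out the prime $p$ dividing $d_m$ and reinterpret the resulting sum as a sum of $\chi_{\cal P}$ over a shorter range. First I would note that the inner sum over $n$ vanishes unless $W,[d_1,e_1],\dots,[d_k,e_k]$ are pairwise coprime (by the choice of $\nu_0$ and the boundedness of ${\cal H}$), so the sum collapses to a single residue class $\nu\pmod q$ with $q=W\prod_i[d_i,e_i]$; this is \eqref{12}. Then I would establish $(\nu_m,q)=p$, i.e.\ \eqref{p}, by the elementary case check already carried out in the excerpt (the only prime dividing both $\nu_m=\nu+h_m$ and $q$ is $p$, since any other common prime factor would either contradict $(\nu_0+h_m,W)=1$ or force $h_m\equiv h_j\pmod{p'}$ for some prime $p'\ge D_0$, impossible for bounded ${\cal H}$). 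This lets me write $n=pn'$ with $n'$ running over a residue class $\nu_m'\pmod{q/p}$, $(\nu_m',q/p)=1$, and $\beta(pn')=\chi_{\cal P}(n')+O(1)$ on the range in question, giving \eqref{13}.

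Next I would insert the Bombieri--Vinogradov-type main term: on the residue class $\nu_m'\pmod{q/p}$ one has $\sum\chi_{\cal P}(n')=\pi^\flat(N/p)/\varphi(q/p)+\Delta(N/p;q/p,\nu_m')$, which is \eqref{14}--\eqref{15}. Plugging this back into \eqref{11} separates $S_{2,I}^{(m)}$ into a main term and an error term \eqref{16}. The error term is controlled exactly as in Maynard: the conductor $q$ is squarefree, $q\le R^2W$, and $p\mid q$; the number of $(d_i,e_i)$ giving a fixed $q$ is $\le\tau_{3k}(q)$; after Cauchy--Schwarz in $q'=q/p$ one factor is bounded trivially (with a power-of-log loss $(\log N)^{a_k}$), the other by $BV[\theta,{\cal P}]$ since $R^2W\le N^{\theta-\epsilon}$, and the remaining sum $\sum_{Y<p\le R}1/p\ll\log\log N$ is absorbed, yielding the error $O_B(\lambda_{\max}^2N/(\log N)^B)$ for arbitrary $B$; then $\lambda_{\max}\ll y_{\max}(\log R)^k$ replaces this by $O_B(y_{\max}^2N/(\log N)^B)$. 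Now I would turn to the main term's inner sum over the $d_i,e_i$ with $d_m=p$, $e_m=1$: write $1/\varphi([d_i,e_i])=\varphi(d_i)^{-1}\varphi(e_i)^{-1}\sum_{u_i\mid d_i,e_i}g(u_i)$ with $g$ totally multiplicative and $g(q)=q-2$ on primes, detect the coprimality conditions $(d_i,e_j)=1$ via $\sum_{s_{i,j}\mid d_i,e_j}\mu(s_{i,j})$ (restricting $s_{i,j}$ to be coprime to the other variables, permissible since $\lambda$ already vanishes off pairwise-coprime $d_i$), and diagonalize to obtain \eqref{19}--\eqref{22} in terms of the transformed weights $y^{(m)}_{r_1,\dots,r_k}(p)$ and $y^{(m)}_{r_1,\dots,r_k}$ of \eqref{20}--\eqref{21}.

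The cross terms $s_{i,j}\ne1$ are the place where one loses a factor: because $y^{(m)}$ is supported on squarefree $r$ coprime to $W$, any nontrivial $s_{i,j}$ must exceed $D_0$, so $\sum_{s>D_0}\mu^2(s)/g(s)^2\ll D_0^{-1}$, and bounding the remaining $u$- and $s$-sums by $\varphi(W)/W\cdot\log R$ and by $O(1)$ respectively gives the $D_0^{-1}$-saving error terms \eqref{23}--\eqref{24}, distinguishing $a_m=1$ (factor $1$) from $a_m=p$ (extra factor $1/p$, since then $u_m$ or some $s_{m,j}$ equals $p$). That produces \eqref{25}. Finally I would substitute \eqref{25} into \eqref{18} and use the prime-number-theorem estimate $(p-1)\pi^\flat(N/p)=N/\log(N/p)+O_\eta(N/(\log N)^2)$, valid uniformly for $N^\eta<p\le N^{\theta/2-\delta}$, to reach \eqref{26}; the error $O_\eta(N/(\log N)^2)$ per prime, summed against $\sum 1/p$ and the diagonal $y$-sums, is dominated by the stated error terms. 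I expect the only genuinely delicate point to be the uniformity in the $BV[\theta,{\cal P}]$ application after the Cauchy--Schwarz split and the verification that the exponent $R^2W\le N^{\theta-\epsilon}$ really is admissible with $R=N^{\theta/2-\delta}$ and $W\ll(\log\log N)^2$ --- everything else is the now-routine Maynard--Tao manipulation transplanted to this setting.
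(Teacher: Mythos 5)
Your proposal reproduces the paper's own proof of Lemma 3.1 step for step: the reduction to a single residue class mod $q$, the case check giving $(\nu_m,q)=p$, the substitution $n=pn'$ to convert $\beta$ into $\chi_{\cal P}$, the Cauchy--Schwarz split of the error and invocation of $BV[\theta,{\cal P}]$, the passage to the $y^{(m)}$-variables via the $g$-expansion and M\"obius detection with the $D_0^{-1}$-saving on nontrivial $s_{i,j}$, and finally the PNT substitution $(p-1)\pi^\flat(N/p)=N/\log(N/p)+O_\eta(N/(\log N)^2)$. This is exactly the paper's argument, so there is nothing to add.
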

By symmetry, the same result also holds for $S_{2, I\hspace{-.em}I}^{(m)}$. Next, we compute the inner sum in the main term of (\ref{26}). The following result is obtained by Maynard (\cite{M1}, Lemma 5.3).
\begin{lem}
If $r_{m}=1$, we have
\begin{equation}
\label{27}
y_{r_{1},\cdots ,r_{k}}^{(m)}=\sum _{a_{m}}\frac{y_{r_{1}, \cdots, r_{m-1}, a_{m}, r_{m+1}, \cdots ,r_{k}}}{\varphi (a_{m})}+O\left( \frac{y_{\mathrm{max}}\varphi (W)\log R}{WD_{0}} \right).
\end{equation}
\end{lem}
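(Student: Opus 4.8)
The plan is to evaluate the inner sum $\sum_{d_1,\dots,d_k}\lambda_{d_1,\dots,d_k}/\prod_i\varphi(d_i)$ appearing in $y^{(m)}_{r_1,\dots,r_k}$ (with $r_m=1$) by substituting the definition \eqref{3} of $\lambda_{d_1,\dots,d_k}$ and performing a Möbius-type inversion. First I would write $\lambda_{d_1,\dots,d_k}=\bigl(\prod_i\mu(d_i)d_i\bigr)\sum_{d_i\mid s_i}\dfrac{\mu(\prod s_i)^2}{\prod\varphi(s_i)}F\bigl(\frac{\log s_1}{\log R},\dots,\frac{\log s_k}{\log R}\bigr)$ (renaming the $r_i$ in \eqref{3} to $s_i$ to avoid collision with the outer variables), plug this into \eqref{21}, and interchange the order of summation so that the sum over $d_1,\dots,d_k$ with $r_i\mid d_i\,(\forall i)$, $d_m=1$, and $d_i\mid s_i$ is done first. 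That inner sum over the $d_i$ factors as a product over $i$ of one-dimensional Ramanujan-type sums $\sum_{r_i\mid d_i\mid s_i}\mu(d_i)d_i/\varphi(d_i)$, each of which telescopes to a clean multiplicative expression in terms of $r_i$, $s_i$ and the arithmetic functions $\varphi$, $g$; the point is that for $i\ne m$ this produces the factor $\mu(r_i)g(r_i)$ (matching the prefactor in \eqref{21}) times $1/\varphi(s_i)$-type weights, while for $i=m$ the constraint $d_m=1$ simply forces the $m$-th factor to contribute $1/\varphi(s_m)$.

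After this rearrangement the expression for $y^{(m)}_{r_1,\dots,r_k}$ becomes $\sum_{s_m}(\text{weight in }s_m)\cdot\big(\text{a }(k-1)\text{-dimensional quantity that is essentially }y_{r_1,\dots,r_{m-1},s_m,r_{m+1},\dots,r_k}/\varphi(s_m)\text{ up to the }g\text{ vs }\varphi\text{ discrepancy}\big)$. The key analytic step is then to replace the multiplicative weight attached to $s_m$ — which will be something like $\prod_{p\mid s_m}(1+O(1/p))$ relative to $1/\varphi(s_m)$ — by its main term $1/\varphi(s_m)$ (this is exactly where the identity \eqref{yF}, $y_{r_1,\dots,r_k}=F(\log r_1/\log R,\dots)$, gets used to control the size), and to estimate the resulting error. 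Here one uses that $y_{\max}$ bounds the $(k-1)$-dimensional piece, that the $s_m$-sum ranges over squarefree $s_m\le R$ coprime to $W$, and that the defect between the true weight and $1/\varphi(s_m)$ only has support on $s_m$ with a prime factor exceeding $D_0$ (since $(s_m,W)=1$), so summing the defect gains a factor $1/D_0$; combined with the standard estimate $\sum_{(s,W)=1,\,s\le R}\mu^2(s)/\varphi(s)\ll(\varphi(W)/W)\log R$ this yields the stated error term $O\!\bigl(y_{\max}\varphi(W)\log R/(WD_0)\bigr)$.

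I expect the main obstacle to be the bookkeeping in the double inversion: \eqref{21} already contains a Möbius inversion (the $r_i\mid d_i$ sum with prefactor $\prod\mu(r_i)g(r_i)$), and substituting \eqref{3} introduces a second one, so one must carefully track the squarefreeness and coprimality constraints — in particular that $\prod d_i$ is squarefree, $(d_i,W)=1$, $\prod d_i\le R$ — and verify that after interchanging sums the variable $s_m$ inherits precisely the support conditions (squarefree, coprime to $W$ and to the other $r_i$, total product $\le R$) that make the inner $(k-1)$-dimensional sum literally equal to the displayed $y_{r_1,\dots,r_{m-1},a_m,r_{m+1},\dots,r_k}$. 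The arithmetic-function identities relating $\varphi([d,e])$, $\varphi(d)\varphi(e)$, $g$, and the local factors $1-2/p$ versus $1-1/p$ are routine but must be applied consistently so that the $g$'s in the numerator and the $\varphi$'s in the denominator of \eqref{27} come out exactly right. Since this is precisely Lemma 5.3 of \cite{M1} adapted to our notation, once the inversion is set up correctly the rest is a direct computation, and I would largely follow Maynard's treatment, inserting the coprimality-to-$W$ refinements that our $W$-trick requires.
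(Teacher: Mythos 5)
Your plan is essentially Maynard's proof of Lemma 5.3 in \cite{M1}, which is exactly what the paper does here — it cites that lemma and gives no proof of its own — so the overall strategy is correct. However, your account of where the $1/D_0$ saving comes from is wrong, and that is the substantive step. After substituting and interchanging, the $m$-th one-dimensional $d$-sum is trivial (the constraints $d_m=1$, $r_m=1$ leave only the term $1$), so the weight attached to $s_m$ in the diagonal term $s_i=r_i$ $(i\ne m)$ is \emph{exactly} $1/\varphi(s_m)$: it is not of the form $\prod_{p\mid s_m}(1+O(1/p))\cdot(1/\varphi(s_m))$ as you claim, and there is no defect to estimate there. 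For $i\ne m$ the sum $\sum_{r_i\mid d_i\mid s_i}\mu(d_i)d_i/\varphi(d_i)$ telescopes to $\frac{\mu(r_i)r_i}{\varphi(r_i)}\cdot\frac{\mu(s_i/r_i)}{\varphi(s_i/r_i)}$, not directly to $\mu(r_i)g(r_i)$; against the prefactor $\mu(r_i)g(r_i)$ from \eqref{21} this gives, on the diagonal $s_i=r_i$, the product $\prod_{i\ne m}\frac{g(r_i)r_i}{\varphi(r_i)^2}=\prod_{i\ne m}\prod_{p\mid r_i}\bigl(1-\tfrac{1}{(p-1)^2}\bigr)$, and it is \emph{this} product which equals $1+O(D_0^{-1})$, because $(r_i,W)=1$ forces every prime factor of $r_i$ to exceed $D_0$. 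The second source of the $1/D_0$ saving is the off-diagonal contribution with $s_j\ne r_j$ for some $j\ne m$: there $s_j/r_j$ has only prime factors exceeding $D_0$, hence $s_j/r_j>D_0$ and $\sum_{s_j/r_j>D_0}\mu^2(s_j/r_j)/\varphi(s_j/r_j)^2\ll D_0^{-1}$. Multiplying either saving by the trivial bound $y_{\max}\sum_{(s_m,W)=1,\,s_m<R}\mu^2(s_m)/\varphi(s_m)\ll y_{\max}(\varphi(W)/W)\log R$ gives the stated error. Once this is corrected, the rest of your bookkeeping goes through; the paper carries out the directly analogous computation for $y^{(m)}_{r_1,\dots,r_k}(p)$ in the display between \eqref{28} and \eqref{29}, which you can compare against.
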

Next, if $\prod _{i=1}^{k}d_{i}$ is square-free, we have
\[
\lambda _{d_{1}, \cdots ,d_{k}}=\left( \prod _{i=1}^{k}\mu (d_{i})d_{i} \right)\underset{d_{i}|a_{i} (\forall i)}{\sum _{a_{1}, \cdots ,a_{k}}}\frac{y_{a_{1},\cdots ,a_{k}}}{\prod _{i=1}^{k}\varphi (a_{i})}
\]
(see \cite{M1}, p.393, (5.8)). By substituting  this into (\ref{20}) and interchanging the order of summation, we have
\[
y_{r_{1}, \cdots ,r_{k}}^{(m)}(p)=\left( \prod _{i=1}^{k}\mu (r_{i})g(r_{i}) \right) \underset{p|a_{m}}{\underset{r_{i}|a_{i} (\forall i)}{\sum _{a_{1}, \cdots ,a_{k}}}}\frac{y_{a_{1},\cdots ,a_{k}}}{\prod _{i=1}^{k}\varphi (a_{i})}  \underset{r_{i}|d_{i}, d_{i}|a_{i} (\forall i)} {\underset{d_{m}=p} {\sum _{d_{1},\cdots ,d_{k}}}}\frac{\prod _{i=1}^{k}\mu (d_{i})d_{i}}{\prod _{i=1}^{k}\varphi (d_{i})}.
\]
If $r_{m}=1$ or $p$, we find that
\begin{align*}
\underset{r_{i}|d_{i}, d_{i}|a_{i} (\forall i)} {\underset{d_{m}=p} {\sum _{d_{1},\cdots ,d_{k}}}}\frac{\prod _{i=1}^{k}\mu (d_{i})d_{i}}{\prod _{i=1}^{k}\varphi (d_{i})} &=\frac{\mu (p)p}{\varphi (p)}\underset{r_{i}|d_{i}, d_{i}|a_{i} (\forall i \neq m)}{\sum _{d_{1}, \cdots ,d_{m-1},d_{m+1}, \cdots ,d_{k}}}\frac{\prod _{i\neq m}\mu (d_{i})d_{i}}{\prod _{i\neq m}\varphi (d_{i})} \\
&=-\frac{p}{p-1}\prod _{i\neq m}\frac{\mu (a_{i})r_{i}}{\varphi (a_{i})}.
\end{align*}
Therefore, 
\begin{equation}
\label{28}
y_{r_{1}, \cdots, r_{k}}^{(m)}(p)=-\frac{p}{p-1}\left( \prod_{i=1}^{k}\mu (r_{i})g(r_{i})\right) \underset{p|a_{m}}{\underset{r_{i}|a_{i} (\forall i)}{\sum _{a_{1},\cdots ,a_{k}}}}\frac{y_{a_{1},\cdots ,a_{k}}}{\prod _{i=1}^{k}\varphi (a_{i})}\prod _{i\neq m}\frac{\mu (a_{i})r_{i}}{\varphi (a_{i})}.
\end{equation}
By the condition of the support of $y_{a_{1}, \cdots ,a_{k}}$, we may restrict the sum to $(a_{i},W)=1$ ($\forall i$). Then, if $a_{j}\neq r_{j}$, it follows that $a_{j}>D_{0}r_{j}$.
 For $j\neq m$, the contribution of such terms is at most
\begin{align*}
&y_{\mathrm{max}}r_{m}^{-1}\left( \prod _{i=1}^{k}g(r_{i})r_{i}\right)\left( \underset{r_{j}|a_{j}}{\sum _{a_{j}>D_{0}r_{j}}}\frac{\mu ^{2}(a_{j})}{\varphi (a_{j})^{2}}\right) \prod _{i\neq j,m}\left( \sum _{r_{j}|a_{j}}\frac{\mu ^{2}(a_{j})}{\varphi (a_{j})^{2}}\right) \underset{(a_{m},W)=1}{\underset{a_{m}<R}{\sum _{p|a_{m}}}}\frac{\mu ^{2}(a_{m})}{\varphi (a_{m})} \\
&\ll y_{\mathrm{max}}\left( \prod_{i\neq m}\frac{g(r_{i})r_{i}}{\varphi (r_{i})^{2}}\right)\cdot g(r_{m})\cdot D_{0}^{-1}\cdot 1\cdot \underset{(a_{m}^{'},W)=1}{\sum_{a_{m}^{'}<\frac{R}{p}}}\frac{\mu ^{2}(a_{m}^{'})}{\varphi (p)\varphi (a_{m}^{'})} \\
&\ll \frac{y_{\mathrm{max}}g(r_{m})\varphi (W)\log \frac{R}{p}}{WD_{0}\varphi (p)}.
\end{align*}
Hence we have
\begin{equation}
\label{29}
\begin{aligned}
y_{r_{1},\cdots ,r_{k}}^{(m)}(p)=&-\frac{p}{p-1}\left( \prod _{i\neq m}\frac{\mu ^{2}(r_{i})g(r_{i})r_{i}}{\varphi (r_{i})^{2}} \right)\mu (r_{m})g(r_{m})\sum _{p|a_{m}}\frac{y_{r_{1}, \cdots ,r_{m-1},a_{m}, r_{m+1}, \cdots ,r_{k}}}{\varphi (a_{m})} \\
& \quad  +O\left( \frac{y_{\mathrm{max}}g(r_{m})\varphi (W)\log \frac{R}{p}}{WD_{0}\varphi (p)} \right).
\end{aligned}
\end{equation}
Since $y_{r_{1},\cdots ,r_{k}}=0$ unless $r_{1}, \cdots ,r_{k}$ are square-free,  we may remove the factors $\mu ^{2}(r_{i})$ ($i\neq m$). Finally, by applying 
\[
\frac{p}{p-1}=1+O(N^{-\eta}),\quad \prod _{i\neq m}\frac{g(r_{i})r_{i}}{\varphi (r_{i})^{2}}=1+O(D_{0}^{-1}) \quad (\mathrm{if}\; (\prod _{i\neq m}r_{i},W)=1),
\]
we obtain the following result: 
\begin{lem}
If $r_{m}=1\;  \mathrm{or}\;  p$, we have 
\begin{equation}
\label{30}
\begin{aligned}
y_{r_{1}, \cdots ,r_{k}}^{(m)}(p)=&-\mu (r_{m})g(r_{m})\underset{p|a_{m}}{\sum _{a_{m}}}\frac{y_{r_{1},\cdots ,r_{m-1}, a_{m}, r_{m+1}, \cdots ,r_{k}}}{\varphi (a_{m})} \\
&\quad + O\left( \frac{y_{\mathrm{max}}g(r_{m})\varphi (W)\log \frac{R}{p}}{WD_{0}\varphi (p)} \right).
\end{aligned}
\end{equation}
\end{lem}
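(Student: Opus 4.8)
The plan is to start from the definition (\ref{20}) of $y^{(m)}_{r_{1},\cdots,r_{k}}(p)$ and substitute into it the inversion formula
\[
\lambda_{d_{1},\cdots,d_{k}}=\left(\prod_{i=1}^{k}\mu(d_{i})d_{i}\right)\underset{d_{i}|a_{i}\,(\forall i)}{\sum_{a_{1},\cdots,a_{k}}}\frac{y_{a_{1},\cdots,a_{k}}}{\prod_{i=1}^{k}\varphi(a_{i})}
\]
valid for $\prod_{i}d_{i}$ square-free (\cite{M1}, p.393, (5.8), also recalled above). Interchanging the order of summation, the inner sum over $d_{1},\cdots,d_{k}$ subject to $d_{m}=p$ and $r_{i}|d_{i}|a_{i}$ factorizes over $i$: the $m$-th factor equals $\mu(p)p/\varphi(p)=-p/(p-1)$ and, since $d_{m}=p|a_{m}$, forces $p|a_{m}$ in the outer sum; for each $i\ne m$ the factor $\sum_{r_{i}|d_{i}|a_{i}}\mu(d_{i})d_{i}/\varphi(d_{i})$ telescopes multiplicatively to $\mu(a_{i})r_{i}/\varphi(a_{i})$ (write $a_{i}=r_{i}c_{i}$ with $(r_{i},c_{i})=1$ and sum a multiplicative function over the divisors of $c_{i}$). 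This yields the exact identity (\ref{28}).

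Next I would truncate the $a_{i}$-sums to pass from (\ref{28}) to (\ref{29}). On the support of $y_{a_{1},\cdots,a_{k}}$ one has $(a_{i},W)=1$, so whenever $a_{j}\ne r_{j}$ every prime factor of $a_{j}/r_{j}$ exceeds $D_{0}$ and hence $a_{j}>D_{0}r_{j}$. Bounding the contribution of the tuples with $a_{j}\ne r_{j}$ for some $j\ne m$ by $y_{\mathrm{max}}$ and completing each truncated sum to a convergent Euler-product-type sum — the $a_{j}$-sum over $a_{j}>D_{0}r_{j}$ contributing the saving $D_{0}^{-1}$, the $a_{m}$-sum over $a_{m}<R$ divisible by $p$ contributing $\ll\varphi(W)W^{-1}\varphi(p)^{-1}\log(R/p)$, the remaining $a_{i}$-sums contributing $O(1)$ — and collecting the weights $g(r_{i})r_{i}/\varphi(r_{i})^{2}$ that accompany them, one obtains exactly the error term $O\!\left(\frac{y_{\mathrm{max}}g(r_{m})\varphi(W)\log\frac{R}{p}}{WD_{0}\varphi(p)}\right)$ recorded in (\ref{29}). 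The weight $g(r_{m})$ is inherited from the prefactor $\prod_{i}\mu(r_{i})g(r_{i})$ in (\ref{20}) (with $g(1)=1$), so the very same estimate covers both cases $r_{m}=1$ and $r_{m}=p$.

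Finally I would clean up the main term of (\ref{29}). Since $p>Y=N^{\eta}$ throughout, $p/(p-1)=1+O(N^{-\eta})$; and since $(\prod_{i\ne m}r_{i},W)=1$ on the support, $\prod_{i\ne m}g(r_{i})r_{i}/\varphi(r_{i})^{2}=1+O(D_{0}^{-1})$. Replacing both products by $1$ (and dropping the redundant factors $\mu^{2}(r_{i})$, since $y_{r_{1},\cdots,r_{k}}=0$ unless the $r_{i}$ are square-free) converts the right-hand side of (\ref{29}) into $-\mu(r_{m})g(r_{m})\sum_{p|a_{m}}y_{r_{1},\cdots,r_{m-1},a_{m},r_{m+1},\cdots,r_{k}}/\varphi(a_{m})$ up to errors absorbed into the displayed $O$-term, which is exactly (\ref{30}).

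The step I expect to require the most care is the truncation in the second paragraph: establishing the inequality $a_{j}>D_{0}r_{j}$ on the support, choosing crude enough majorants for the completed $a_{i}$-sums so that the powers of $\log R$, of $\varphi(W)/W$ and of $1/\varphi(p)$ come out exactly as in (\ref{29}), and checking that the argument is genuinely uniform in the two cases $r_{m}=1$ and $r_{m}=p$ — the only difference being the value of $g(r_{m})$ and whether the forced prime $p$ sits inside $a_{m}$ or inside the divisors $d_{i}$ with $i\ne m$.
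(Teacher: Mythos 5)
Your proposal follows exactly the route the paper takes: substitute Maynard's inversion formula into the definition (\ref{20}), interchange sums and evaluate the inner $d$-sum multiplicatively to reach (\ref{28}), truncate the $a_{i}$-sums ($i\neq m$) to $a_{i}=r_{i}$ using $a_{i}>D_{0}r_{i}$ on the support to get (\ref{29}), and finally replace $p/(p-1)$ and $\prod_{i\neq m}g(r_{i})r_{i}/\varphi(r_{i})^{2}$ by $1$ up to absorbed error. This is correct and essentially verbatim the paper's own argument, including the bookkeeping that produces the $g(r_{m})\varphi(W)\log(R/p)/(WD_{0}\varphi(p))$ error in both cases $r_{m}=1$ and $r_{m}=p$.
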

By (\ref{30}), we have
\begin{equation}
\label{31}
\begin{aligned}
y_{\mathrm{max}}^{(m)}(p)|_{r_{m}=p}\ll \frac{y_{\mathrm{max}}\varphi (W)\log \frac{R}{p}}{W}, 
\end{aligned}
\end{equation}
and 
\begin{equation}
\label{32}
y_{\mathrm{max}}^{(m)}(p)|_{r_{m}=1}\ll \frac{y_{\mathrm{max}}\varphi (W)\log \frac{R}{p}}{pW}.
\end{equation}
Next we compute the sum over $a_{m}$. For this purpose, we use the following lemma, proved in \cite{GGPY2} (see Lemma 6.2 of \cite{M1}).
\begin{lem}
Let $A_{1}, A_{2}, L>0$ and $\gamma$ be a multiplicative function satisfying 
\[
0\leq \frac{\gamma (q)}{q} \leq 1-A_{1},
\]
\[
-L\leq \underset{q\in {\cal P}}{\sum _{w\leq q\leq z}}\frac{\gamma (q)\log q}{q}-\log \frac{z}{w}\leq A_{2}
\]
for any $2\leq w\leq z$. Let $h$ be the totally multiplicative function defined by
\[
h(q)=\frac{\gamma (q)}{q-\gamma (q)}
\]
for primes $q$. For a smooth function $G: [0,1]\to \mathbf{R}$, put 
\[
G_{\mathrm{max}}=\sup _{t\in [0,1]} (|G(t)|+|G^{'}(t)|).
\]
Then, we have
\[
\sum _{d<z}\mu ^{2}(d)h(d)G\left( \frac{\log d}{\log z}\right)=\mathfrak{S} \log z \int _{0}^{1}G(x)dx +O(\mathfrak{S} LG_{\mathrm{max}}),
\]
where 
\[
\mathfrak{S} =\prod _{q\in {\cal P}}\left( 1-\frac{\gamma (q)}{q} \right)^{-1}\left(1-\frac{1}{q}\right).
\]
\end{lem}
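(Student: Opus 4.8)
The plan is to reduce the weighted sum to the unweighted partial sums
\[
T(x) := \sum_{d \le x}\mu^{2}(d)\,h(d) \qquad (x \ge 2),
\]
to establish $T(x) = \mathfrak{S}\log x + O(\mathfrak{S}L)$ uniformly in $x$ (the implied constant depending only on $A_{1}, A_{2}$), and then to assemble. For the reduction, write $G(u) = G(1) - \int_{u}^{1}G'(v)\,dv$ for $u \in [0,1]$; substituting $u = \log d/\log z$ and interchanging the $d$-sum with the $v$-integral (the resulting inner condition $\log d/\log z \le v$ being simply $d \le z^{v}$) gives
\[
\sum_{d<z}\mu^{2}(d)h(d)\,G\!\left(\tfrac{\log d}{\log z}\right) = G(1)\,T(z) - \int_{0}^{1}G'(v)\,T(z^{v})\,dv .
\]
Granting the asymptotic for $T$, the main terms combine through the integration-by-parts identity $G(1) - \int_{0}^{1}v\,G'(v)\,dv = \int_{0}^{1}G(v)\,dv$, producing $\mathfrak{S}\log z\int_{0}^{1}G(v)\,dv$, while the two error contributions are $\ll \mathfrak{S}L\bigl(|G(1)| + \int_{0}^{1}|G'(v)|\,dv\bigr) \ll \mathfrak{S}L\,G_{\mathrm{max}}$ — precisely the stated shape. (For $z$ bounded everything is absorbed into the error.)

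The substance is therefore the estimate for $T(x)$, the classical evaluation of a one-dimensional sieve sum. The mechanism is most transparent through the Dirichlet series $F(s) = \sum_{d\ge 1}\mu^{2}(d)h(d)d^{-s} = \prod_{q}(1+h(q)q^{-s})$: since $h(q) = \gamma(q)/(q-\gamma(q))$ gives $1+h(q) = (1-\gamma(q)/q)^{-1}$, one factors $F(s) = \zeta(s+1)H(s)$ with $H(s) = \prod_{q}(1+h(q)q^{-s})(1-q^{-s-1})$ and $H(0) = \mathfrak{S}$, so the pole of $\zeta(s+1)$ at $s=0$ accounts for the main term $\mathfrak{S}\log x$. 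To make this rigorous I would: first, use $0 \le \gamma(q)/q \le 1-A_{1}$ to get $h(q) \asymp_{A_{1}} \gamma(q)/q$ (hence $h(q)$ bounded), and subtract Mertens' theorem $\sum_{q\le z}q^{-1}\log q = \log z + O(1)$ from the hypothesis to obtain
\[
\sum_{w\le q\le z}\frac{(\gamma(q)-1)\log q}{q} = O(L + A_{2}) \qquad (2 \le w \le z);
\]
second, feed this (via partial summation, together with the convergent tails $\sum_{q}\gamma(q)q^{-2} < \infty$ and $\sum_{q}h(q)^{2}\log q < \infty$, which follow from the same input) either into a contour shift for the truncated Perron integral $T(x) = \frac{1}{2\pi i}\int_{(1/\log x)}F(s)x^{s}s^{-1}\,ds$ across the simple pole of $\zeta(s+1)$, near which $F(s)x^{s}/s = \mathfrak{S}e^{s\log x}s^{-2}(1+O(|s|))$ so the residue is $\mathfrak{S}\log x + O(\mathfrak{S}L)$; or, equivalently and without any analytic continuation, into the elementary recursion
\[
\sum_{d\le x}\mu^{2}(d)h(d)\log d = \sum_{q\le x}h(q)\log q\underset{(e,q)=1}{\sum_{e\le x/q}}\mu^{2}(e)h(e)
\]
(obtained from $\log d = \sum_{q\mid d}\log q$ on squarefree $d$) combined with $\sum_{q\le y}h(q)\log q = \log y + O(L+A_{2})$ and a bootstrap on the size of $x$. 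Either route delivers $T(x) = \mathfrak{S}\log x + O(\mathfrak{S}L)$, and the reduction of the first paragraph then concludes the proof.

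The step I expect to be the genuine obstacle is not the structure but controlling the secondary behaviour of $T$ at the claimed strength. The hypothesis is only a one-sided real-variable estimate, with the slack measured by $L$, so $H(s)$ need not continue across $\Re s = 0$ and one cannot merely quote a residue computation; one must track precisely how $L$ propagates through the partial summations, through the value of $H$ (and, in the contour version, its derivative) near $s = 0$, and through the tail integral, in order to land the error at $O(\mathfrak{S}L)$ rather than merely $O(\mathfrak{S})$. This bookkeeping — which is exactly the content of the cited lemma of Goldston, Graham, Pintz and Y{\i}ld{\i}r{\i}m — is routine but unavoidable.
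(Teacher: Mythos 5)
The paper does not actually prove this lemma: it is quoted verbatim as Lemma~3.4 with a pointer to Goldston--Graham--Pintz--Y{\i}ld{\i}r{\i}m (and to Lemma~6.2 of Maynard), so there is no internal proof for your attempt to be measured against.

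Judged on its own, your outline is structurally correct and matches the cited sources. The reduction is complete and right: writing $G(u)=G(1)-\int_u^1 G'(v)\,dv$ and interchanging gives
\[
\sum_{d<z}\mu^2(d)h(d)G\!\left(\tfrac{\log d}{\log z}\right)=G(1)T(z)-\int_0^1 G'(v)T(z^v)\,dv,
\]
and the identity $G(1)-\int_0^1 vG'(v)\,dv=\int_0^1 G(v)\,dv$ assembles the main term, with the $L$-errors bounded by $\mathfrak{S}L\bigl(|G(1)|+\int_0^1|G'|\bigr)\ll\mathfrak{S}LG_{\max}$. This is exactly the reduction used in the references. For the remaining estimate $T(x)=\mathfrak{S}\log x+O(\mathfrak{S}L)$ you offer two alternative routes and correctly observe that the real work is propagating the parameter $L$. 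The references in fact proceed by the elementary route (your second option: subtract Mertens from the two-sided hypothesis, pass to $h$ via $h(q)-\gamma(q)/q\ll_{A_1}(\gamma(q)/q)^2$, then run the $\log d=\sum_{q\mid d}\log q$ recursion with a bootstrap); the Perron/contour-shift version requires more care since the hypotheses give only real-variable control, so $H(s)$ need not continue, as you note. What you have is therefore a correct and honest plan, with the reduction proved and the heavier half left at the sketch level — which is a defensible state given the paper itself only cites the result.

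One small remark: for $z^v$ near $1$ the asymptotic for $T(z^v)$ degenerates, but the contribution of $v\ll 1/\log z$ to $\int_0^1 G'(v)T(z^v)\,dv$ is $O(G_{\max})$, which is absorbed (the lemma implicitly has $L\gg 1$). It is worth saying this explicitly if you write the argument out in full.
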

The following lemma is a direct consequence of Lemma 3.4:
\begin{lem}
Under the same situation as in Lemma $3.4$, put
\[
G_{p}(x)=G\left( \frac{\log \frac{R}{p}}{\log R}\left( \frac{\log p}{\log \frac{R}{p}}+x \right) \right).
\]
Then, we have 
\[
\sum _{d<\frac{R}{p}}\mu ^{2}(d)h(d)G\left( \frac{\log pd}{\log R} \right)=\mathfrak{S} \log \frac{R}{p}\int _{0}^{1}G_{p}(x)dx +O(\mathfrak{S} LG_{\mathrm{max}}).
\]
\end{lem}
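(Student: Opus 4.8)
The statement of Lemma 3.5 is nothing more than Lemma 3.4 applied to a suitable rescaling of the test function $G$, so the plan is to reduce it directly to Lemma 3.4 by a change of variables. First I would introduce the substitution that matches the two normalizations: in Lemma 3.4 the sieve variable $d$ runs over $d<z$ and the test function is evaluated at $\log d/\log z$, whereas here $d$ runs over $d<R/p$ and the argument appearing is $\log(pd)/\log R$. Writing $z=R/p$ and $d=d$, we have
\[
\frac{\log(pd)}{\log R}=\frac{\log p+\log d}{\log R}=\frac{\log(R/p)}{\log R}\left(\frac{\log p}{\log(R/p)}+\frac{\log d}{\log(R/p)}\right),
\]
so that $G\!\left(\dfrac{\log(pd)}{\log R}\right)=G_{p}\!\left(\dfrac{\log d}{\log z}\right)$ with $G_{p}$ as defined in the statement. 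Thus the left-hand side of Lemma 3.5 is exactly $\sum_{d<z}\mu^{2}(d)h(d)G_{p}(\log d/\log z)$.

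Next I would verify that $G_{p}$ satisfies the hypotheses required to invoke Lemma 3.4 with test function $G_{p}$ in place of $G$. The map $x\mapsto \frac{\log(R/p)}{\log R}\bigl(\frac{\log p}{\log(R/p)}+x\bigr)$ is affine and sends $[0,1]$ into $[0,1]$ (since its values at the endpoints are $\frac{\log p}{\log R}$ and $1$, both in $[0,1]$, and it is monotone), so $G_{p}$ is a smooth function $[0,1]\to\mathbf{R}$ whenever $G$ is. Applying Lemma 3.4 verbatim gives
\[
\sum_{d<z}\mu^{2}(d)h(d)G_{p}\!\left(\frac{\log d}{\log z}\right)=\mathfrak{S}\,\log z\int_{0}^{1}G_{p}(x)\,dx+O\!\bigl(\mathfrak{S}L\,(G_{p})_{\mathrm{max}}\bigr),
\]
where $(G_{p})_{\mathrm{max}}=\sup_{t\in[0,1]}(|G_{p}(t)|+|G_{p}'(t)|)$. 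Recalling $z=R/p$, the main term is precisely $\mathfrak{S}\log\frac{R}{p}\int_{0}^{1}G_{p}(x)\,dx$, matching the claimed formula.

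Finally I would deal with the error term, which is the only genuine point to check. By the chain rule, $G_{p}'(x)=\frac{\log(R/p)}{\log R}\,G'\!\bigl(\cdots\bigr)$, and since $0<\log(R/p)/\log R\le 1$, one has $|G_{p}(t)|\le G_{\mathrm{max}}$ and $|G_{p}'(t)|\le G_{\mathrm{max}}$ for all $t\in[0,1]$; hence $(G_{p})_{\mathrm{max}}\le 2G_{\mathrm{max}}$, and the error term is $O(\mathfrak{S}L\,G_{\mathrm{max}})$ as stated. (Here $G_{\mathrm{max}}$ is the constant attached to $G$ exactly as in Lemma 3.4, and the multiplicative data $\gamma$, $h$, $A_{1}$, $A_{2}$, $L$, $\mathfrak{S}$ are unchanged, since only the test function has been altered.) I do not expect any serious obstacle here: the entire content is the affine change of variables plus the observation that rescaling the argument of $G$ by a factor in $[0,1]$ cannot increase the relevant sup-norm. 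The only thing to state carefully is that the image of $[0,1]$ under the affine map genuinely lies in $[0,1]$, so that applying Lemma 3.4 to $G_{p}$ is legitimate; this is immediate from $p\le R$.
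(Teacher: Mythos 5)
Your argument is exactly the paper's: identify $G\bigl(\tfrac{\log pd}{\log R}\bigr)=G_{p}\bigl(\tfrac{\log d}{\log(R/p)}\bigr)$, apply Lemma 3.4 with $z=R/p$, and note $(G_{p})_{\mathrm{max}}\ll G_{\mathrm{max}}$ since the affine reparametrization has slope in $(0,1]$. Correct, and the same approach.
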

\begin{proof}
Since 
\[
G\left( \frac{\log pd}{\log R}\right) =G_{p}\left( \frac{\log d}{\log \frac{R}{p}}\right), \quad (G_{p})_{\mathrm{max}}\ll G_{\mathrm{max}},
\]
by applying Lemma 3.4 with $z=R/p$, we obtain the result. 
\end{proof}
We compute the sum in (\ref{30}). Using the conditions of the support of $y_{r_{1}, \cdots ,r_{k}}$, we have
\begin{equation}
\label{33}
\begin{aligned}
&\underset{p|a_{m}}{\sum _{a_{m}}}\frac{y_{r_{1},\cdots ,r_{m-1},a_{m},r_{m+1}, \cdots ,r_{k}}}{\varphi (a_{m})} \\
& =\frac{1}{\varphi (p)}\underset{(p,a_{m}^{'})=1}{\sum _{a_{m}^{'}}}\frac{y_{r_{1}, \cdots ,r_{m-1},pa_{m}^{'}, r_{m+1}, \cdots, r_{k}}}{\varphi (a_{m}^{'})} \\
& =\frac{1}{\varphi (p)} \underset{(a_{m}^{'}, pW\prod _{i\neq m}r_{i})=1}{\sum _{a_{m}^{'}<\frac{R}{p}}}\frac{\mu ^{2}(a_{m}^{'})}{\varphi (a_{m}^{'})}F\left( \frac{\log r_{1}}{\log R}, \cdots ,\frac{\log r_{m-1}}{\log R}, \frac{\log pa_{m}^{'}}{\log R}, \frac{\log r_{m+1}}{\log R}, \cdots ,\frac{\log r_{k}}{\log R} \right).
\end{aligned}
\end{equation}
We apply Lemma 3.5 with 
\begin{eqnarray}
\gamma (q)=\left\{ \begin{array}{ll}
 1 & (q\mid \hspace{-.67em}/ pW\prod _{i\neq m}r_{i}) \\
 0 & (\mathrm{otherwise}). \\
\end{array} \right.
\end{eqnarray} 
In this case, we have
\begin{align*}
L\ll &1+\sum _{q|pW\prod _{i\neq m}r_{i}}\frac{\log q}{q} \\
\ll & \sum _{q<\log R}\frac{\log q}{q} +\underset{q \geq \log R}{\sum _{q|pW\prod _{i\neq m}r_{i}}}\frac{\log \log R}{\log R} \\
\ll &\log \log N.
\end{align*}
Moreover, since $(r_{i}, p)=1$ ($\forall i\neq m$), we have
\[
\mathfrak{S} =\prod _{q|pW\prod _{i\neq m}r_{i}}\left( 1-\frac{1}{q}\right) =\frac{\varphi (p)\varphi (W)}{pW}\prod _{i\neq m}\frac{\varphi (r_{i})}{r_{i}}.
\]
Therefore, by applying Lemma 3.5 to the right hand side of (\ref{33}) , we obtain
\begin{equation}
\label{34}
\begin{aligned}
& \underset{p|a_{m}}{\sum _{a_{m}}}\frac{y_{r_{1},\cdots ,r_{m-1},a_{m},r_{m+1}, \cdots ,r_{k}}}{\varphi (a_{m})} \\
&\quad =\frac{\varphi (W)}{pW}\log \frac{R}{p}\prod _{i\neq m}\frac{\varphi (r_{i})}{r_{i}}\int _{0}^{1}F_{p}^{[m]}\left( \frac{\log r_{1}}{\log R}, \cdots ,\frac{\log r_{m-1}}{\log R}, u, \frac{\log r_{m+1}}{\log R}, \cdots ,\frac{\log r_{k}}{\log R} \right) du \\
&\quad \quad +O\left( \frac{\varphi (W)}{pW}F_{\mathrm{max}}\log \log N \right),
\end{aligned}
\end{equation}
where the function $F_{p}^{[m]}(\cdots )$ is obtained by replacing the $m$-th component $x$ of $F(\cdots)$ with $((\log R/p)/\log R)((\log p/\log R/p) +x)$. We put
\begin{equation}
\label{35}
F_{r_{1},\cdots ,r_{k}}^{(p;m)}=\int _{0}^{1}F_{p}^{[m]}\left( \frac{\log r_{1}}{\log R}, \cdots ,\frac{\log r_{m-1}}{\log R}, u, \frac{\log r_{m+1}}{\log R}, \cdots ,\frac{\log r_{k}}{\log R} \right) du.
\end{equation}
By substituting  (\ref{34}) into (\ref{30}) and using $y_{\mathrm{max}}\ll F_{\mathrm{\max}}$, $\log R/p \ll \log R$,  we obtain the following result: 
\begin{lem}
If $r_{m}=1$, we have
\begin{equation}
\label{36}
y_{r_{1},\cdots ,r_{k}}^{(m)}(p)=-\frac{\varphi (W)}{pW}\log \frac{R}{p} \left(  \prod _{i\neq m}\frac{\varphi (r_{i})}{r_{i}} \right)F_{r_{1}, \cdots ,r_{k}}^{(p;m)}+O\left( \frac{F_{\mathrm{max}}\varphi (W)\log R}{pWD_{0}} \right)
\end{equation}
for $Y<p< R$, where $F_{r_{1}, \cdots ,r_{k}}^{(p;m)}$ is defined by (\ref{35}). 
\end{lem}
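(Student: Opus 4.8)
The plan is to derive Lemma~3.6 by feeding the asymptotic evaluation of Lemma~3.5 into the identity of Lemma~3.3. Since Lemma~3.6 only concerns the case $r_m = 1$, the factor $\mu(r_m)g(r_m)$ appearing in (\ref{30}) is simply $\mu(1)g(1) = 1$, so the whole problem reduces to computing the inner sum $\sum_{p \mid a_m} y_{r_1,\cdots,r_{m-1},a_m,r_{m+1},\cdots,r_k}/\varphi(a_m)$ with an error of size $O(F_{\max}\varphi(W)\log R / (pWD_0))$.

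First I would exploit the support of $y_{r_1,\cdots,r_k}$: it vanishes unless $\prod_i r_i$ is squarefree, coprime to $W$, and $<R$, and on its support it equals $F(\log r_1/\log R, \cdots, \log r_k/\log R)$ by (\ref{yF}). Because $r_m = 1$, the condition $p \mid a_m$ together with squarefreeness forces $a_m = p a_m'$ with $(a_m', p) = 1$, and the remaining coprimality conditions give $(a_m', W\prod_{i\neq m} r_i) = 1$ and $a_m' < R/p$. This rewrites the inner sum as in (\ref{33}), namely $\frac{1}{\varphi(p)}\sum_{a_m' < R/p,\ (a_m', pW\prod_{i\neq m}r_i)=1}\frac{\mu^2(a_m')}{\varphi(a_m')}F(\cdots,\log(pa_m')/\log R,\cdots)$. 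Next I would apply Lemma~3.5 with the multiplicative weight $\gamma(q) = 1$ for $q \nmid pW\prod_{i\neq m}r_i$ and $\gamma(q) = 0$ otherwise; then $h(q) = 1/(q-1)$ exactly on the moduli that occur, matching $\mu^2(a_m')/\varphi(a_m')$. The verifications are routine: $L \ll \log\log N$, since the primes dividing $pW\prod_{i\neq m}r_i$ that are $\geq \log R$ are too sparse to matter and those below $\log R$ contribute $O(\log\log R)$; and, using $(r_i,p) = (r_i, W) = 1$, the singular series is $\mathfrak{S} = \prod_{q \mid pW\prod_{i\neq m}r_i}(1 - 1/q) = \frac{\varphi(p)\varphi(W)}{pW}\prod_{i\neq m}\frac{\varphi(r_i)}{r_i}$. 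The $\varphi(p)$ cancels the leading $1/\varphi(p)$, the rescaled test function is $F_p^{[m]}$, and Lemma~3.5 delivers (\ref{34}):
\[
\sum_{p \mid a_m}\frac{y_{r_1,\cdots,a_m,\cdots,r_k}}{\varphi(a_m)} = \frac{\varphi(W)}{pW}\log\frac{R}{p}\prod_{i\neq m}\frac{\varphi(r_i)}{r_i}\,F^{(p;m)}_{r_1,\cdots,r_k} + O\!\left(\frac{\varphi(W)}{pW}F_{\max}\log\log N\right),
\]
with $F^{(p;m)}_{r_1,\cdots,r_k}$ as in (\ref{35}).

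Finally I would substitute (\ref{34}) into (\ref{30}) with $r_m = 1$, which immediately produces the stated main term $-\frac{\varphi(W)}{pW}\log\frac{R}{p}\prod_{i\neq m}\frac{\varphi(r_i)}{r_i}F^{(p;m)}_{r_1,\cdots,r_k}$. The only genuine care needed is in merging the two error terms. The error inherited from (\ref{30}) is $O(y_{\max}\varphi(W)\log(R/p)/(WD_0\varphi(p)))$, which with $y_{\max} \ll F_{\max}$, $\varphi(p) \asymp p$ and $\log(R/p) \leq \log R$ becomes $O(F_{\max}\varphi(W)\log R/(pWD_0))$; the error coming from (\ref{34}) is $O(F_{\max}\varphi(W)\log\log N/(pW))$, and this is absorbed into the former because $D_0 = \log\log\log N$ and $\log R \asymp (\theta/2)\log N$ give $\log\log N = o(\log R/D_0)$.

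I do not expect a real obstacle: the argument is bookkeeping around the support of $y$ plus one invocation of Lemma~3.5. The step most prone to slips is tracking the coprimality conditions when the prime $p$ is peeled off $a_m$ (ensuring the prefactor $1/\varphi(p)$ appears and that $\mathfrak{S}$ and $L$ are computed against the correct set of bad primes), and confirming that the new error term is genuinely dominated by the pre-existing $D_0^{-1}$-type error, which is precisely the content of the estimate $\log\log N = o(\log R/D_0)$.
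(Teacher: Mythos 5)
Your proof is correct and follows the paper's argument essentially verbatim: isolate the $p$-part in $a_m$ to rewrite the inner sum as in (\ref{33}), apply Lemma~3.5 with $\gamma$ supported off $pW\prod_{i\neq m}r_i$ to get (\ref{34}) with $L \ll \log\log N$ and $\mathfrak{S} = \frac{\varphi(p)\varphi(W)}{pW}\prod_{i\neq m}\frac{\varphi(r_i)}{r_i}$, then substitute into (\ref{30}) and absorb the $\log\log N$-error into the $D_0^{-1}$-error using $y_{\max}\ll F_{\max}$ and $\log(R/p)\ll\log R$. No discrepancies with the paper's proof.
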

The summand in the main term of (\ref{26}) is zero unless $u_{1}, \cdots ,u_{k}$ satisfy $u_{m}=1$, $(u_{i},u_{j})=1$ ($i\neq j$), $\prod _{i=1}^{k}u_{i}$ is square-free and $(u_{i}, pW)=1$ ($\forall i$). It is proved in \cite{M1} (p.403, (6.13)) that if $u_{1}, \cdots ,u_{k}$ satisfy these conditions, we have
\begin{equation}
\label{37}
y_{u_{1}, \cdots ,u_{k}}^{(m)}=(\log R)\frac{\varphi (W)}{W}\left( \prod _{i=1}^{k}\frac{\varphi (u_{i})}{u_{i}} \right)F_{u_{1},\cdots ,u_{k}}^{(m)}+O\left( \frac{F_{\mathrm{max}}\varphi (W)\log R}{WD_{0}} \right),
\end{equation}
where 
\begin{equation}
\label{38}
F_{u_{1},\cdots ,u_{k}}^{[m]}=\int _{0}^{1}F\left( \frac{\log u_{1}}{\log R}, \cdots ,\frac{\log u_{m-1}}{\log R}, v, \frac{\log u_{m+1}}{\log R}, \cdots ,\frac{\log u_{k}}{\log R} \right) dv.
\end{equation}
Combining (\ref{36}), (\ref{37}), we obtain 
\begin{equation}
\label{39}
\begin{aligned}
y_{u_{1},\cdots ,u_{k}}^{(m)}(p)y_{u_{1},\cdots ,u_{k}}^{(m)}=&-\frac{\varphi (W)^{2}}{pW^{2}}(\log R)\left(\log \frac{R}{p}\right)\left( \prod _{i\neq m}\frac{\varphi (u_{i})^{2}}{u_{i}^{2}}\right) F_{u_{1},\cdots ,u_{k}}^{(p;m)}F_{u_{1},\cdots ,u_{k}}^{[m]} \\
&\quad +O\left( \frac{\varphi (W)^{2}F_{\mathrm{max}}^{2}\log ^{2}R}{pW^{2}D_{0}} \right)
\end{aligned}
\end{equation}
if $u_{m}=1$. In the above computation, we used the trivial estimates 
\[
\sup _{u_{1},\cdots ,u_{k}}|F_{u_{1},\cdots ,u_{k}}^{[m]}| \ll F_{\mathrm{max}},\quad \sup _{u_{1},\cdots ,u_{k}}|F_{u_{1},\cdots ,u_{k}}^{(p;m)}| \ll F_{\mathrm{max}}.
\]
We substitute this into the sum over $u_{1}, \cdots ,u_{k}$ in (\ref{26}). The contribution of the error term is 
\[
\ll \frac{\varphi (W)^{2}F_{\mathrm{max}}^{2}\log ^{2}R}{pW^{2}D_{0}}\underset{(u_{i}, W)=1 (\forall i)}{\sum _{u_{1},\cdots ,u_{m-1},u_{m+1},\cdots ,u_{k}< R}}\frac{1}{\prod _{i=1}^{k}g(u_{i})}\ll \frac{\varphi (W)^{k+1}F_{\mathrm{max}}^{2}\log ^{k+1}N}{pW^{k+1}D_{0}}.
\]
Therefore, 
\begin{equation}
\label{40}
\begin{aligned}
&\sum _{u_{1}, \cdots ,u_{k}}\frac{y_{u_{1},\cdots ,u_{k}}^{(m)}(p)y_{u_{1},\cdots ,u_{k}}^{(m)}}{\prod _{i=1}^{k}g(u_{i})} \\
& \quad =-\frac{\varphi (W)^{2}}{pW^{2}}(\log R)\left( \log \frac{R}{p} \right)
\underset{(u_{i},pW)=1 (\forall i)}{\underset{(u_{i},u_{j})=1 (\forall i\neq j)}{\underset{u_{m}=1}{\sum _{u_{1}, \cdots ,u_{k}}}}}\left( \prod _{i\neq m}\frac{\mu ^{2} (u_{i})\varphi (u_{i})^{2}}{u_{i}^{2}g(u_{i})}\right)F_{u_{1},\cdots ,u_{k}}^{(p;m)}F_{u_{1},\cdots ,u_{k}}^{[m]} \\
&\quad \quad \quad  +O\left( \frac{\varphi (W)^{k+1}F_{\mathrm{max}}^{2}\log ^{k+1}N}{pW^{k+1}D_{0}} \right).
\end{aligned}
\end{equation}
We compute the sum over $u_{1}, \cdots ,u_{k}$ in (\ref{40}). First,  we remove the condition that $u_{i}$ and $u_{j}$ are coprime if $i\neq j$.  Since $(u_{i}, W)=(u_{j},W)=1$, if $(u_{i},u_{j})>1$, there exists a prime $q> D_{0}$ such that $q|u_{i}, u_{j}$. Therefore, the possible error is at most
\[
F_{\mathrm{max}}^{2}\left( \sum _{q>D_{0}}\frac{\varphi (q)^{4}}{g(q)^{2}q^{4}}\right)\left( \underset{(u,W)=1}{\sum _{u<R}}\frac{\varphi (u)^{2}}{u^{2}g(u)}\right)^{k-1}\ll \frac{F_{\mathrm{max}}^{2}\varphi (W)^{k-1}(\log R)^{k-1}}{D_{0}W^{k-1}}.
\]
Hence we have
\begin{equation}
\label{41}
\begin{aligned}
&\underset{(u_{i},pW)=1 (\forall i)}{\underset{(u_{i},u_{j})=1 (\forall i\neq j)}{\underset{u_{m}=1}{\sum _{u_{1}, \cdots ,u_{k}}}}}\left( \prod _{i\neq m}\frac{\mu ^{2} (u_{i})\varphi (u_{i})^{2}}{u_{i}^{2}g(u_{i})}\right)F_{u_{1},\cdots ,u_{k}}^{(p;m)}F_{u_{1},\cdots ,u_{k}}^{[m]} \\
&\quad = \underset{(u_{i},pW)=1 (\forall i)}{\underset{u_{m}=1}{\sum _{u_{1}, \cdots ,u_{k}}}}\left( \prod _{i\neq m}\frac{\mu ^{2} (u_{i})\varphi (u_{i})^{2}}{u_{i}^{2}g(u_{i})}\right)F_{u_{1},\cdots ,u_{k}}^{(p;m)}F_{u_{1},\cdots ,u_{k}}^{[m]} \\
&\quad\quad\quad +O\left( \frac{F_{\mathrm{max}}^{2}\varphi (W)^{k-1}(\log R)^{k-1}}{D_{0}W^{k-1}} \right).
\end{aligned}
\end{equation}
Now we apply Lemma 3.4 with 
\begin{eqnarray}
\gamma (q)=\left\{ \begin{array}{ll}
 1-\frac{q^{2}-3q+1}{q^{3}-q^{2}-2q+1} & (q\mid \hspace{-.67em}/ pW) \\
 0 & (\mathrm{otherwise}) \\
\end{array} \right.
\end{eqnarray} 
to the sum over $u_{1}, \cdots . u_{m-1}, u_{m+1},\cdots ,u_{k}$. Then we have
\begin{align*}
&\underset{(u_{i},pW)=1 (\forall i)}{\underset{u_{m}=1}{\sum _{u_{1}, \cdots ,u_{k}}}}\left( \prod _{i\neq m}\frac{\mu ^{2} (u_{i})\varphi (u_{i})^{2}}{u_{i}^{2}g(u_{i})}\right)F_{u_{1},\cdots ,u_{k}}^{(p;m)}F_{u_{1},\cdots ,u_{k}}^{[m]} \\
&=\mathfrak{S} ^{k-1}(\log R)^{k-1} \\
&\quad \times \int _{0}^{1}\cdots \int _{0}^{1}\left\{ \int _{0}^{1} F_{p}^{[m]}(u_{1},\cdots ,u_{m-1},u,u_{m+1}, \cdots ,u_{k})du \right\} \\
&\quad\quad\quad\quad \times \left\{ \int _{0}^{1} F(u_{1},\cdots ,u_{m-1},u,u_{m+1}, \cdots ,u_{k})du \right\} du_{1}\cdots du_{m-1}du_{m+1}\cdots du_{k} \\
&\quad\quad +O(\mathfrak{S} ^{k-1}L (\log R)^{k-2}F_{\mathrm{max}}^{2}).
\end{align*}
In this case, 
\[
L \ll 1+\sum _{q|pW}\frac{\log q}{q} \ll \log D_{0},
\]
\begin{align*}
\mathfrak{S} =& \left\{ \prod_{q \mid \hspace{-.33em}/ pW}\left( 1-\frac{1}{q}+O\left( \frac{1}{q^{2}}\right) \right)^{-1}\left( 1-\frac{1}{q}\right) \right\} \prod _{q|pW}\left( 1-\frac{1}{q} \right) \\
=&\frac{\varphi (p)\varphi (W)}{pW}\underset{q\neq p}{\prod _{q\geq D_{0}}}\left(1 +O\left( \frac{1}{q^{2}}\right) \right) \\
=&\frac{\varphi (p)\varphi (W)}{pW}(1+O(D_{0}^{-1})).
\end{align*}
Therefore, 
\begin{equation}
\label{42}
\begin{aligned}
&\underset{(u_{i},pW)=1 (\forall i)}{\underset{u_{m}=1}{\sum _{u_{1}, \cdots ,u_{k}}}}\left( \prod _{i\neq m}\frac{\mu ^{2} (u_{i})\varphi (u_{i})^{2}}{u_{i}^{2}g(u_{i})}\right)F_{u_{1},\cdots ,u_{k}}^{(p;m)}F_{u_{1},\cdots ,u_{k}}^{[m]} \\&\quad =\frac{\varphi (p)^{k-1}\varphi (W)^{k-1}}{p^{k-1}W^{k-1}}(\log R)^{k-1} \\
&\quad\quad \times \int _{0}^{1}\cdots \int _{0}^{1}\left\{ \int _{0}^{1} F_{p}^{[m]}(u_{1},\cdots ,u_{m-1},u,u_{m+1}, \cdots ,u_{k})du \right\} \\
&\quad\quad\quad \times \left\{ \int _{0}^{1} F(u_{1},\cdots ,u_{m-1},u,u_{m+1}, \cdots ,u_{k})du \right\} du_{1}\cdots du_{m-1}du_{m+1}\cdots du_{k} \\
& \quad\quad\quad\quad +O\left( \frac{\varphi (p)^{k-1}\varphi (W)^{k-1}}{D_{0}p^{k-1}W^{k-1}}(\log R)^{k-1}F_{\mathrm{max}}^{2}\right).
\end{aligned}
\end{equation}
We put 
\begin{equation}
\label{43}
\begin{aligned}
J_{k}^{(m)}[p]=&\int _{0}^{1}\cdots \int _{0}^{1}\left\{ \int _{0}^{1} F_{p}^{[m]}(u_{1},\cdots ,u_{m-1},u,u_{m+1}, \cdots ,u_{k})du \right\} \\
&\times \left\{ \int _{0}^{1} F(u_{1},\cdots ,u_{m-1},u,u_{m+1}, \cdots ,u_{k})du \right\} du_{1}\cdots du_{m-1}du_{m+1}\cdots du_{k}.
\end{aligned}
\end{equation}
By substituting  (\ref{42}), (\ref{43}) into (\ref{41}), we have
\begin{equation}
\label{44}
\begin{aligned}
& \underset{(u_{i},pW)=1 (\forall i)}{\underset{(u_{i},u_{j})=1 (\forall i\neq j)}{\underset{u_{m}=1}{\sum _{u_{1}, \cdots ,u_{k}}}}}\left( \prod _{i\neq m}\frac{\mu ^{2} (u_{i})\varphi (u_{i})^{2}}{u_{i}^{2}g(u_{i})}\right)F_{u_{1},\cdots ,u_{k}}^{(p;m)}F_{u_{1},\cdots ,u_{k}}^{[m]} \\
&\quad\quad = \frac{\varphi (p)^{k-1}\varphi (W)^{k-1}}{p^{k-1}W^{k-1}}(\log R)^{k-1}J_{k}^{(m)}[p] +O\left( \frac{F_{\mathrm{max}}^{2}\varphi (W)^{k-1}(\log R)^{k-1}}{D_{0}W^{k-1}} \right).
\end{aligned}
\end{equation}
We substitute (\ref{44}) into (\ref{40}). Since $\log R \ll \log N$, we obtain
\begin{equation}
\label{45}
\begin{aligned}
&\sum _{u_{1}, \cdots ,u_{k}}\frac{y_{u_{1},\cdots ,u_{k}}^{(m)}(p)y_{u_{1},\cdots ,u_{k}}^{(m)}}{\prod _{i=1}^{k}g(u_{i})}\\
&\quad=-\frac{\varphi (p)^{k-1}\varphi (W)^{k+1}}{p^{k}W^{k+1}}(\log R)^{k}\left( \log \frac{R}{p} \right)J_{k}^{(m)}[p] +O\left( \frac{F_{\mathrm{max}}^{2}\varphi (W)^{k+1}(\log N)^{k+1}}{pD_{0}W^{k+1}} \right) \\
&\quad =-\frac{ \varphi (W)^{k+1}}{pW^{k+1}}(\log R)^{k}\left( \log \frac{R}{p} \right)J_{k}^{(m)}[p] +O\left( \frac{F_{\mathrm{max}}^{2}\varphi (W)^{k+1}(\log N)^{k+1}}{pD_{0}W^{k+1}} \right). \\
\end{aligned}
\end{equation}
We substitute this into (\ref{26}). We compute the sum over $p$. Recall that $J_{k}^{(m)}[p]$ is defined by (\ref{43}), and the function $F_{p}^{[m]}$ is obtained by replacing the $m$-th component $u$ of $F$ with $((\log R/p)/\log R)(\log p/(\log R/p) +u)$. To see how the sum over $p$  becomes,  let us compute the sum 
\begin{equation}
\label{46}
\sum _{Y<p< R}\frac{1}{p}\left( \log \frac{R}{p}\right)\left( \log \frac{N}{p} \right)^{-1}f\left( \frac{\log \frac{R}{p}}{\log R}\left( \frac{\log p}{\log \frac{R}{p}}+u \right) \right)
\end{equation}
for any smooth function $f$, where $Y=N^{\eta}, R=N^{\frac{\theta}{2}-\delta}$. We denote by $\pi (v)$ the number of primes equal or less than $v$. Then, the sum (\ref{46}) is expressed by 
\[
\int _{N^{\eta}}^{N^{\frac{\theta}{2}-\delta}}\frac{1}{v}\left( \log \frac{R}{v}\right) \left( \log \frac{N}{v}\right)^{-1}f\left( \frac{\log \frac{R}{v}}{\log R}\left( \frac{\log v}{\log \frac{R}{v}}+u \right) \right)d\pi (v).
\]
Using the Prime Number Theorem, this is asymptotically
\[
\int _{N^{\eta}}^{N^{\frac{\theta}{2}-\delta}}\frac{1}{v}\left( \log \frac{R}{v}\right) \left( \log \frac{N}{v}\right)^{-1}f\left( \frac{\log \frac{R}{v}}{\log R}\left( \frac{\log v}{\log \frac{R}{v}}+u \right) \right)\frac{dv}{\log v}.
\]
By putting $\log v/\log N=\xi$, this becomes 
\[
\int _{\eta}^{\frac{\theta}{2}-\delta}\frac{\frac{\theta}{2}-\delta -\xi}{1-\xi}f\left( \frac{\xi}{\frac{\theta}{2}-\delta}+\frac{\frac{\theta}{2}-\delta -\xi}{\frac{\theta}{2}-\delta}u \right)\frac{d\xi}{\xi}.
\]
We put
\begin{equation}
\label{47}
\begin{aligned}
&F_{m, \delta}(u_{1}, \cdots ,u_{k}; \xi) \\
&\quad:=F\left( u_{1},\cdots ,u_{m-1}, \frac{\xi}{\frac{\theta}{2}-\delta}+\frac{\frac{\theta}{2}-\delta -\xi}{\frac{\theta}{2}-\delta}u_{m}, u_{m+1},\cdots ,u_{k} \right), 
\end{aligned}
\end{equation}
\begin{equation}
\label{48}
\begin{aligned}
L_{k, \delta}^{[m]}(\xi):=& \int _{0}^{1}\cdots \int _{0}^{1}\left\{ \int _{0}^{1} F_{m,\delta}(u_{1}, \cdots ,u_{k};\xi)du_{m} \right\} \\
&\quad\quad \quad\quad \times \left\{ \int _{0}^{1}F(u_{1},\cdots ,u_{k})du_{m}\right\}du_{1}\cdots du_{m-1}du_{m+1}\cdots du_{k}.
\end{aligned}
\end{equation}
Then, by the above argument and simple estimate
\[
\sum _{Y<p< R}\frac{1}{p\log \frac{N}{p}}\ll _{\eta}\frac{1}{\log N} \quad (Y=N^{\eta}, R=N^{\frac{\theta}{2}-\delta}),
\]
we have
\begin{equation}
\label{48.5}
\begin{aligned}
&\sum _{Y<p < R}\frac{1}{\log \frac{N}{p}}\sum _{u_{1}, \cdots ,u_{k}}\frac{y_{u_{1},\cdots ,u_{k}}^{(m)}(p)y_{u_{1},\cdots ,u_{k}}^{(m)}}{\prod _{i=1}^{k}g(u_{i})} \\
&\quad =-\frac{\varphi (W)^{k+1}}{W^{k+1}}(\log R)^{k}(1+o(1))\int _{\eta}^{\frac{\theta}{2}-\delta}\frac{\frac{\theta}{2}-\delta -\xi}{1-\xi}L_{k,\delta}^{[m]}(\xi)\frac{d\xi}{\xi} \\
&\quad\quad\quad +O\left( \frac{F_{\mathrm{max}}^{2}\varphi (W)^{k+1}(\log N)^{k}}{D_{0}W^{k+1}} \right),
\end{aligned}
\end{equation}
if the integral of the main term is not zero. Finally, by substituting  this into (\ref{26}), and  combining (\ref{31}), (\ref{32}) and
\[
y_{\mathrm{max}}^{(m)}\ll \frac{\varphi (W)}{W}F_{\mathrm{max}}\log N,\quad  y_{\mathrm{max}}\ll F_{\mathrm{max}}
\]
(see \cite{M1}, p.403), we obtain the following result: 
\begin{prop}
Assume $BV[\theta ,{\cal P}]$. Then, if
\[
L_{k,\delta}^{(m)}(F):=\int _{\eta}^{\frac{\theta}{2}-\delta}\frac{\frac{\theta}{2}-\delta -\xi}{1-\xi}L_{k,\delta}^{[m]}(\xi)\frac{d\xi}{\xi} \neq 0, 
\]
we have
\begin{equation}
\label{49}
\begin{aligned}
S_{2,I}^{(m)}=&-\frac{\varphi (W)^{k}N}{W^{k+1}}(\log R)^{k}(1+o(1))L_{k,\delta}^{(m)}(F) \\
&\quad\quad +O\left( \frac{F_{\mathrm{max}}^{2}\varphi (W)^{k}N(\log N)^{k}}{D_{0}W^{k+1}} \right) +O_{B}\left( \frac{NF_{\mathrm{max}}^{2}}{(\log N)^{B}} \right)
\end{aligned}
\end{equation}
as $N\to \infty$, where
\begin{align*}
L_{k, \delta}^{[m]}(\xi):=& \int _{0}^{1}\cdots \int _{0}^{1}\left\{ \int _{0}^{1} F_{m,\delta}(u_{1}, \cdots ,u_{k};\xi)du_{m} \right\} \\
&\quad\quad \quad\quad \times \left\{ \int _{0}^{1}F(u_{1},\cdots ,u_{k})du_{m}\right\}du_{1}\cdots du_{m-1}du_{m+1}\cdots du_{k},
\end{align*}
\begin{align*}
&F_{m, \delta}(u_{1}, \cdots ,u_{k}; \xi) \\
&\quad\quad\quad :=F\left( u_{1},\cdots ,u_{m-1}, \frac{\xi}{\frac{\theta}{2}-\delta}+\frac{\frac{\theta}{2}-\delta -\xi}{\frac{\theta}{2}-\delta}u_{m}, u_{m+1},\cdots ,u_{k} \right).
\end{align*}
\end{prop}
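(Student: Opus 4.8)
The plan is to assemble the proposition directly from the intermediate results already obtained in this section, with no further sieve input required. Concretely, I would substitute the evaluation (\ref{48.5}) of the double sum over $p$ and $u_{1},\dots ,u_{k}$ into the partial formula (\ref{26}) for $S_{2,I}^{(m)}$, and then verify that the various error terms that appear collapse into the two displayed in (\ref{49}). The only auxiliary inputs needed are the a priori bounds (\ref{31}), (\ref{32}) for $y_{\mathrm{max}}^{(m)}(p)|_{r_{m}=p}$ and $y_{\mathrm{max}}^{(m)}(p)|_{r_{m}=1}$, together with $y_{\mathrm{max}}^{(m)}\ll \frac{\varphi(W)}{W}F_{\mathrm{max}}\log N$ and $y_{\mathrm{max}}\ll F_{\mathrm{max}}$, both recorded above.

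For the main term, multiplying the leading term of (\ref{48.5}) by the prefactor $N/\varphi(W)$ of (\ref{26}) converts
\[
-\frac{\varphi(W)^{k+1}}{W^{k+1}}(\log R)^{k}(1+o(1))\int_{\eta}^{\frac{\theta}{2}-\delta}\frac{\frac{\theta}{2}-\delta-\xi}{1-\xi}L_{k,\delta}^{[m]}(\xi)\,\frac{d\xi}{\xi}
\]
into
\[
-\frac{\varphi(W)^{k}N}{W^{k+1}}(\log R)^{k}(1+o(1))\,L_{k,\delta}^{(m)}(F),
\]
which is precisely the main term of (\ref{49}). The hypothesis $L_{k,\delta}^{(m)}(F)\neq 0$ enters exactly here: it is the condition under which (\ref{48.5}) holds with a genuine multiplicative $(1+o(1))$, and the proposition is therefore stated only under this assumption.

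For the error terms I would argue as follows. In the first $O$-term of (\ref{26}), inserting (\ref{31}) and (\ref{32}) gives
\[
\sum_{Y<p\leq R}\left(y_{\mathrm{max}}^{(m)}(p)|_{r_{m}=1}+\frac{y_{\mathrm{max}}^{(m)}(p)|_{r_{m}=p}}{p}\right)\ll \frac{y_{\mathrm{max}}\varphi(W)\log R}{W}\sum_{Y<p\leq R}\frac{1}{p}\ll \frac{F_{\mathrm{max}}\varphi(W)\log R}{W},
\]
using $\sum_{Y<p\leq R}1/p=O(1)$ (valid since $Y=N^{\eta}$, $R=N^{\frac{\theta}{2}-\delta}$) and $y_{\mathrm{max}}\ll F_{\mathrm{max}}$; combining this with $y_{\mathrm{max}}^{(m)}\ll \frac{\varphi(W)}{W}F_{\mathrm{max}}\log N$ and $\log R\ll \log N$ shows that this $O$-term is $O\!\left(\frac{F_{\mathrm{max}}^{2}\varphi(W)^{k}N(\log N)^{k}}{W^{k+1}D_{0}}\right)$. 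The error term carried along inside (\ref{48.5}) contributes, after multiplication by $N/\varphi(W)$, a quantity of the same shape, and the final $O_{B}$-term of (\ref{26}) becomes $O_{B}\!\left(\frac{NF_{\mathrm{max}}^{2}}{(\log N)^{B}}\right)$ after using $y_{\mathrm{max}}\ll F_{\mathrm{max}}$. Collecting these gives (\ref{49}).

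Since all the genuinely analytic work has already been done --- Lemmas 3.2--3.6, the use of $BV[\theta,{\cal P}]$ to kill the error term in (\ref{18}), and the conversion of the sum over $p$ into the $\xi$-integral via the Prime Number Theorem in (\ref{46})--(\ref{48.5}) --- the only thing to watch in this last step is the bookkeeping: one must confirm that every approximation in the chain (\ref{26})$\,\to\,$(\ref{45})$\,\to\,$(\ref{48.5}) has relative error $\ll D_{0}^{-1}$, and that these relative errors, together with the Bombieri--Vinogradov error already absorbed into (\ref{18}), do not spoil the single $(1+o(1))$ factor of the main term. I do not expect a substantive obstacle here; the genuinely delicate point was the earlier one of separating the prime $p$ out of the divisor variable $a_{m}$ and feeding the residual sum into the one-dimensional sieve estimate of Lemma 3.5, which is what produced the $\xi$-integral in the first place.
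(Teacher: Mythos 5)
Your proposal is correct and follows the paper's own route exactly: the paper proves Proposition 3.7 by substituting the evaluation (\ref{48.5}) into Lemma 3.1 (equation (\ref{26})) and then controlling the error terms via (\ref{31}), (\ref{32}), $y_{\mathrm{max}}^{(m)}\ll \frac{\varphi(W)}{W}F_{\mathrm{max}}\log N$, $y_{\mathrm{max}}\ll F_{\mathrm{max}}$, and $\sum_{Y<p\leq R}1/p=O_{\eta}(1)$, exactly as you do. The bookkeeping you outline matches the paper's, and your remark on the role of the hypothesis $L_{k,\delta}^{(m)}(F)\neq 0$ in justifying the $(1+o(1))$ factor is the same caveat the paper records.
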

Notice that the same result holds for $S_{2, I\hspace{-.em}I}^{(m)}$. If $L_{k,\delta}^{(m)}(F)=0$, the leading term vanishes and hence
\[
S_{2,I}^{(m)}=S_{2, I\hspace{-.em}I}^{(m)}=o \left( \frac{F_{\mathrm{max}}^{2}\varphi (W)^{k}N(\log N)^{k}}{W^{k+1}} \right)+O_{B}\left( \frac{NF_{\mathrm{max}}^{2}}{(\log N)^{B}} \right).
\]

\section{The computation of $S_{2, I\hspace{-.em}I\hspace{-.em}I}^{(m)}$}
To compute
\begin{equation}
\label{55}
S_{2, I\hspace{-.em}I\hspace{-.em}I}^{(m)}:=\underset{d_{m}=e_{m}=1}{\underset{e_{1},\cdots ,e_{k}}{\sum _{d_{1}, \cdots ,d_{k}}}}\lambda _{d_{1}, \cdots ,d_{k}}\lambda _{e_{1}, \cdots ,e_{k}}\underset{[d_{i}, e_{i}]|n+h_{i} (\forall i)}{\underset{n\equiv \nu _{0} (\mathrm{mod}\; W)}{\sum _{N\leq n<2N}}}\beta (n+h_{m}), 
\end{equation}
we use the following lemma: 
\begin{lem}
Let $\beta (n)$ be the function defined by 
\begin{eqnarray}
\beta (n)=
\left\{
\begin{array}{l}
1 \quad (n=p_{1}p_{2}, Y<p_{1}\leq N^{\frac{1}{2}}<p_{2}) \\
0 \quad (\mathrm{otherwise}),
\end{array}
\right.
\end{eqnarray}
where $Y=N^{\eta}$, $1\ll \eta <\frac{1}{4}$. Then, we have
\begin{equation}
\label{50}
\underset{(n,q)=1}{\sum _{N\leq n<2N}}\beta (n)=\frac{N}{\log N}\log \frac{1-\eta}{\eta}+O \left( \frac{N\log \log N}{(\log N)^{2}} \right)
\end{equation}
uniformly for $q\leq N$. Here, the implicit constant might be dependent on $\eta$.
\end{lem}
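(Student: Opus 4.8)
The plan is to expand $\beta$ in terms of the smaller prime factor $p_{1}$ of $n$, reduce the count carrying the coprimality constraint to an unconditional one modulo a $q$-uniform error, and then evaluate the resulting prime sum via the prime number theorem.

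First I would observe that if $n=p_{1}p_{2}$ with $Y<p_{1}\le N^{1/2}<p_{2}$, then $p_{1}$ is the unique prime divisor of $n$ that does not exceed $N^{1/2}$; hence for $N\le n<2N$ the representation is unique up to $O(1)$ degenerate cases (prime squares, $n=N$), and since $p_{2}=n/p_{1}\ge N/p_{1}\ge N^{1/2}$ holds automatically,
\[
\sum_{N\le n<2N}\beta(n)=\sum_{Y<p_{1}\le N^{1/2}}\bigl(\pi(2N/p_{1})-\pi(N/p_{1})\bigr)+O(1).
\]
Imposing $(n,q)=1$ deletes the pairs with $p_{1}\mid q$ or $p_{2}\mid q$. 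Because $p_{1}>Y=N^{\eta}$, at most $\eta^{-1}$ primes dividing $q$ can occur as $p_{1}$, contributing $\ll_{\eta}N^{1-\eta}/\log N$ pairs; because $p_{2}>N^{1/2}$ and $q\le N$, at most one prime can occur as $p_{2}$, contributing $\ll N^{1/2}/\log N$ pairs. Both are $o\bigl(N(\log N)^{-2}\bigr)$ uniformly in $q\le N$, which is precisely where uniformity of the final error comes from; I expect this bookkeeping — controlling exactly which prime divisors of $q$ can play the role of $p_{1}$ or of $p_{2}$ — to be the only genuinely delicate point.

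It then remains to evaluate $\Sigma:=\sum_{Y<p\le N^{1/2}}\bigl(\pi(2N/p)-\pi(N/p)\bigr)$. For $Y<p\le N^{1/2}$, writing $u=\log p/\log N\in[\eta,1/2]$ one has $\log(N/p)=(1-u)\log N\ge\tfrac12\log N$, so the prime number theorem with classical error term gives $\pi(2N/p)-\pi(N/p)=\frac{N}{p(1-u)\log N}+O\!\bigl(\frac{N}{p(\log N)^{2}}\bigr)$. Summing the error over $p$ and using that $\sum_{Y<p\le N^{1/2}}1/p$ is $O_{\eta}(1)$ — indeed bounding it crudely by $\log\log N$ already suffices for the stated error term — reduces the problem to $\frac{N}{\log N}\sum_{Y<p\le N^{1/2}}\frac{1}{p(1-\log p/\log N)}$. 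By partial summation against $d\pi(t)=\frac{dt}{\log t}+dE(t)$, the $dE$-contribution being negligible by the classical error term, followed by the substitution $t=N^{u}$, under which $\frac{dt}{t\log t}=\frac{du}{u}$, this inner sum equals $\int_{\eta}^{1/2}\frac{du}{u(1-u)}+o(1)$. Finally, partial fractions give $\int_{\eta}^{1/2}\bigl(\frac1u+\frac1{1-u}\bigr)du=\bigl[\log\tfrac{u}{1-u}\bigr]_{\eta}^{1/2}=\log\frac{1-\eta}{\eta}$, and collecting the error terms yields the claimed formula. (Using $\pi(x)=\mathrm{Li}(x)+O\bigl(x(\log x)^{-3}\bigr)$ in place of the classical error term works identically and explains the harmless $\log\log N$ left in the statement.)
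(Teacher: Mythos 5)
Your argument follows the same route as the paper: decompose $\beta$ over the smaller prime factor $p_{1}$, strip the coprimality condition with a crude $q$-uniform bound, apply the prime number theorem to the inner count over $p_{2}$, and evaluate the surviving sum $\sum_{Y<p\le N^{1/2}}\frac{1}{p\log(N/p)}$ by partial summation. The only difference is cosmetic: where the paper expands $\frac{1}{\log N-\log u}$ as a geometric series and evaluates each Stieltjes integral $\int\frac{\log^{k}u}{u}\,d\pi(u)$ separately before resumming to $\log\frac{1-\eta}{\eta}$, you substitute $t=N^{u}$ directly and finish with partial fractions — a tidier way to arrive at the same integral, and your $1/\eta$-versus-$\omega(q)$ bookkeeping on the deleted prime divisors is also slightly sharper than the paper's $\omega(q)\ll N^{\epsilon}$. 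Both are sound and give the stated error term.
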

\begin{proof}
We denote by $\omega (q)$ the number of distinct prime factors of $q$. Then,
\begin{align*}
\underset{(n,q)=1}{\sum _{N\leq n<2N}}\beta (n)=& \underset{(p_{1},q)=1}{\sum _{Y<p_{1}\leq N^{\frac{1}{2}}}}\underset{(p_{2},q)=1}{\sum _{\frac{N}{p_{1}}\leq p_{2}<\frac{2N}{p_{1}}}}1 \\
=& \underset{(p_{1},q)=1}{\sum _{Y<p_{1}\leq N^{\frac{1}{2}}}}\left\{ \pi ^{\flat}\left( \frac{N}{p_{1}}\right) +O(\omega (q)) \right\} \\
=&  \underset{(p_{1},q)=1}{\sum _{Y<p_{1}\leq N^{\frac{1}{2}}}}\pi ^{\flat}\left( \frac{N}{p_{1}} \right) +O(N^{\frac{1}{2}}\omega (q)) \\
=&\sum _{Y<p_{1}\leq N^{\frac{1}{2}}}\pi ^{\flat}\left( \frac{N}{p_{1}} \right)+O\left( \pi ^{\flat}\left( \frac{N}{Y}\right)\omega (q) \right)+O(N^{\frac{1}{2}}\omega (q)).
\end{align*}
By applying the Prime Number Theorem to the final line, we obtain
\begin{equation}
\label{51}
\begin{aligned}
\underset{(n,q)=1}{\sum _{N\leq n<2N}}\beta (n)=\sum _{Y<p_{1}\leq N^{\frac{1}{2}}}\left\{ \frac{N}{p_{1}\log \frac{N}{p_{1}}}+O\left( \frac{N}{p_{1}(\log N)^{2}}\right) \right\} +O\left( \omega (q)\frac{N^{1-\eta}}{\log N} \right).
\end{aligned}
\end{equation}
The contribution of the first error term is 
\[
\sum _{Y<p_{1}\leq N^{\frac{1}{2}}}\frac{N}{p_{1}(\log N)^{2}}\ll \frac{N\log \log N}{(\log N)^{2}}.
\]
On the other hand, the main term is 
\begin{equation}
\label{52}
\begin{aligned}
\sum _{Y<p_{1}\leq N^{\frac{1}{2}}}\frac{N}{p_{1}\log \frac{N}{p_{1}}} =&N\int _{Y}^{N^{\frac{1}{2}}}\frac{d\pi (u)}{u(\log N-\log u)} \\
=& \frac{N}{\log N}\sum _{k=0}^{\infty}\frac{1}{\log ^{k}N}\int _{Y}^{N^{\frac{1}{2}}}\frac{\log ^{k}u}{u}d\pi (u).
\end{aligned}
\end{equation}
By partial integration, we find that
\begin{align*}
\int_{Y}^{N^{\frac{1}{2}}}\frac{d\pi (u)}{u}=&\left[ \frac{1}{\log u}+O\left( \frac{1}{\log ^{2}u}\right) \right]_{N^{\eta}}^{N^{\frac{1}{2}}}+\int _{N^{\eta}}^{N^{\frac{1}{2}}}\frac{1}{u^{2}}\left( \frac{u}{\log u}+O\left( \frac{u}{\log ^{2}u} \right)\right)du \\
=&\log \left( \frac{1}{2\eta} \right) +O\left( \frac{1}{\log N} \right),  
\end{align*}
\begin{align*}
\int _{Y}^{N^{\frac{1}{2}}}\frac{\log u}{u}d\pi (u)=& \left[ 1+O\left( \frac{1}{\log u}\right) \right]_{N^{\eta}}^{N^{\frac{1}{2}}} \\
& -\int _{N^{\eta}}^{N^{\frac{1}{2}}}\frac{1-\log u}{u^{2}}\left( \frac{u}{\log u}+O\left( \frac{u}{\log ^{2}u }\right)\right) du \\
=& \left( \frac{1}{2}-\eta \right)\log N+O(1),
\end{align*}
and for $k\geq 2$, we have
\begin{align*}
\int _{Y}^{N^{\frac{1}{2}}}\frac{\log ^{k}u}{u}d\pi (u)=& \left[ \frac{\log ^{k}u}{u} \left( \frac{u}{\log u} +O\left( \frac{u}{\log ^{2}u} \right) \right) \right] _{N^{\eta}}^{N^{\frac{1}{2}}} \\
& -\int _{N^{\eta}}^{N^{\frac{1}{2}}}\frac{k \log ^{k-1}u -\log ^{k}u}{u^{2}}\left( \frac{u}{\log u} +O\left( \frac{u}{\log ^{2}u} \right) \right)du \\
=& \frac{1}{k}\left(\frac{1}{2^{k}}-\eta ^{k} \right)\log ^{k}N+O(2^{-k}\log ^{k-1}N).
\end{align*}
(The implicit constant might be dependent on $\eta$, but independent of $k$.) Combining these, we obtain 
\begin{equation}
\label{53}
\begin{aligned}
\sum _{k=0}^{\infty} \frac{1}{\log ^{k}N}\int _{Y}^{N^{\frac{1}{2}}}\frac{\log ^{k}u}{u}d\pi (u)&=\log \left(\frac{1}{2\eta} \right)+\sum _{k=1}^{\infty}\frac{1}{k}\left( \frac{1}{2^{k}}-\eta ^{k} \right) +O\left( \sum _{k=1}^{\infty}\frac{2^{-k}}{\log N} \right) \\
&=\log \frac{1-\eta}{\eta}+O\left( \frac{1}{\log N} \right).
\end{aligned}
\end{equation}
By substituting  (\ref{53}) into (\ref{52}), we have
\[
\sum _{Y<p_{1}\leq N^{\frac{1}{2}}}\frac{N}{p_{1}\log \frac{N}{p_{1}}}=\frac{N}{\log N}\log \frac{1-\eta}{\eta} +O\left( \frac{N}{\log ^{2}N} \right).
\]
Consequently, 
\begin{equation}
\label{54}
\underset{(n,q)=1}{\sum _{N\leq n<2N}}\beta (n)=\frac{N}{\log N}\log \frac{1-\eta}{\eta}+O\left( \frac{N\log \log N}{\log ^{2}N} \right)+O\left( \frac{\omega (q)N^{1-\eta}}{\log N} \right).
\end{equation}
Since $\omega (q) \ll N^{\epsilon}$ ($\forall \epsilon >0$) holds uniformly for $q\leq N$, the second error term is dominated by the first one. Hence we obtain the result. 
\end{proof}
We return to the computation of $S_{2, I\hspace{-.em}I\hspace{-.em}I}^{(m)}$, defined by (\ref{55}).  The only pairs $(d_{1},\cdots ,d_{k},e_{1},\cdots ,e_{k})$ satisfying the condition that $W, [d_{1},e_{1}], \cdots, [d_{k}, e_{k}]$ are pairwise coprime contribute to the sum. We denote the restricted sum by  $\sum ^{'}$. We put
\[
q=W\prod _{i=1}^{k}[d_{i},e_{i}].
\]
Then, there exists a unique $\nu \; (\mathrm{mod}\; q)$ such that $\nu \equiv \nu _{0}\; (\mathrm{mod} \; W)$, $h_{i}+\nu \equiv 0\; (\mathrm{mod} \; [d_{i},e_{i}])$ ($i=1, \cdots ,k$) and the sum over $n$ is rewritten as the sum over integers congruent to $\nu$ modulo $q$. Therefore, 
\begin{equation}
\label{56}
\begin{aligned}
\underset{[d_{i},e_{i}]|n+h_{i} (\forall i)}{\underset{n\equiv \nu _{0} (\mathrm{mod}\; W)}{\sum_{N\leq n<2N}}}\beta (n+h_{m})=& \underset{n \equiv \nu \; (\mathrm{mod}\; q)}{\sum _{N\leq n<2N}}\beta (n+h_{m}) \\
=& \underset{n \equiv \nu ^{'} \; (\mathrm{mod}\; q)}{\sum _{N<n\leq 2N}}\beta (n)+O(1),
\end{aligned}
\end{equation}
where $\nu ^{'}=\nu +h_{m}$. This $\nu ^{'}$ satisfies $(\nu ^{'}, q)=1$. This fact follows from our choice of $\nu _{0}$ and the condition that the elements of $\cal{H}$ are bounded.  We have treated the similar situation in Section 3, hence we omit to prove this. Hence by (\ref{56}), we have
\begin{equation}
\label{57}
\begin{aligned}
\underset{[d_{i},e_{i}]| n+h_{i} (\forall i)}{\underset{n\equiv \nu _{0}\; (\mathrm{mod}\; W)}{\sum _{N\leq n<2N}}}\beta (n+h_{m})&=\frac{1}{\varphi (q)}\underset{(n,q)=1}{\sum _{N\leq n<2N}}\beta (n) +\Delta _{\beta}(N; q, \nu ^{'}) +O(1),
\end{aligned}
\end{equation}
where 
\[
\Delta _{\beta}(N; q, \nu ^{'})=\underset{n\equiv \nu ^{'}\; (\mathrm{mod}\; q)}{\sum _{N\leq n<2N}}\beta (n) -\frac{1}{\varphi (q)}\underset{(n,q)=1}{\sum _{N\leq n<2N}}\beta (n).
\]
Now we apply Lemma 4.1 to the sum in (\ref{57}). Then we have
\begin{align*}
\underset{[d_{i},e_{i}]| n+h_{i} (\forall i)}{\underset{n\equiv \nu _{0}\; (\mathrm{mod}\; W)}{\sum _{N\leq n<2N}}} \beta (n+h_{m})= \frac{X_{N, \eta}}{\varphi (q)}  +\Delta _{\beta}(N; q, \nu ^{'})+O(1),
\end{align*}
where 
\begin{equation}
\label{defofxne}
X_{N, \eta}=\frac{N}{\log N}\log \frac{1-\eta}{\eta}+O\left( \frac{N\log \log N}{(\log N)^{2}} \right).
\end{equation}
By substituting  this into (\ref{55}), we obtain 
\begin{equation}
\label{58}
\begin{aligned}
S_{2, I\hspace{-.em}I\hspace{-.em}I}^{(m)}=& \frac{X_{N, \eta}}{\varphi (W)}\underset{d_{m}=e_{m}=1}{\underset{e_{1},\cdots ,e_{k}}{\sum ^{\quad\quad '}_{d_{1}, \cdots ,d_{k}}}} \frac{\lambda _{d_{1},\cdots ,d_{k}}\lambda _{e_{1},\cdots ,e_{k}}}{\prod _{i=1}^{k}\varphi ([d_{i}, e_{i}])} \\
&\quad +O\left(\underset{e_{1}, \cdots ,e_{k}}{\sum ^{\quad\quad '} _{d_{1} ,\cdots ,d_{k}}}|\lambda _{d_{1},\cdots ,d_{k}}\lambda _{e_{1} ,\cdots ,e_{k}}|(|\Delta _{\beta}(N; q,\nu ^{'})|+1 ) \right).
\end{aligned}
\end{equation}
Under the assumption of the estimation $BV[\theta , {\cal E}_{2}]$, the error term above is  evaluated by $O_{B}(Ny_{\mathrm{max}}^{2}/(\log N)^{B})$. The proof of this statement is essentially the same as that in \cite{M1}, hence we omit it. Moreover, the sum in the main term is also computed in \cite{M1} (see the proof of Lemma 5.2 of \cite{M1}). The result is 
\[
\underset{d_{m}=e_{m}=1}{\underset{e_{1},\cdots ,e_{k}}{\sum ^{\quad\quad '}_{d_{1}, \cdots ,d_{k}}}} \frac{\lambda _{d_{1},\cdots ,d_{k}}\lambda _{e_{1},\cdots ,e_{k}}}{\prod _{i=1}^{k}\varphi ([d_{i}, e_{i}])} =\sum _{u_{1},\cdots ,u_{k}}\frac{(y_{u_{1}, \cdots ,u_{k}}^{(m)})^{2}}{\prod _{i=1}^{k}g(u_{i})} +O\left( \frac{(y_{\mathrm{max}}^{(m)})^{2}\varphi (W)^{k-1}(\log N)^{k-1}}{D_{0}W^{k-1}} \right).
\]
Consequently, we obtain 
\begin{equation}
\label{59}
\begin{aligned}
S_{2, I\hspace{-.em}I\hspace{-.em}I}^{(m)}=& \frac{X_{N, \eta}}{\varphi (W)}\sum _{u_{1}, \cdots ,u_{k}}\frac{(y_{u_{1}, \cdots ,u_{k}}^{(m)})^{2}}{\prod _{i=1}^{k}g(u_{i})} \\
&\quad +O\left( \frac{(y_{\mathrm{max}}^{(m)})^{2}\varphi (W)^{k-2}N(\log N)^{k-2}}{D_{0}W^{k-1}} \right) +O_{B}\left( \frac{Ny_{\mathrm{max}}^{2}}{(\log N)^{B}}\right).
\end{aligned}
\end{equation}
We put
\[
J_{k}^{(m)}(F)=\int _{0}^{1}\cdots \int _{0}^{1}\left( \int _{0}^{1}F(t_{1},\cdots ,t_{k})dt_{m}\right)^{2}dt_{1}\cdots dt_{m-1}dt_{m+1}\cdots dt_{k}.
\]
By Lemma 6.3 of \cite{M1}, we have
\begin{align*}
& \frac{N}{\varphi (W)\log N}\sum _{u_{1}, \cdots ,u_{k}}\frac{(y_{u_{1}, \cdots ,u_{k}}^{(m)})^{2}}{\prod _{i=1}^{k}g(u_{i})} \\
& \quad =\frac{\varphi (W)^{k}N(\log R)^{k+1}}{W^{k+1}\log N}J_{k}^{(m)}(F)+O\left( \frac{F_{\mathrm{max}}^{2}\varphi (W)^{k}N(\log N)^{k}}{W^{k+1}D_{0}} \right).
\end{align*}
Moreover, we have
\[
y_{\mathrm{max}} \ll F_{\mathrm{max}}, \quad y_{\mathrm{max}}^{(m)} \ll \frac{F_{\mathrm{max}}\varphi (W)\log N}{W}
\]
(see \cite{M1}, p.403). By substituting the definition of $X_{N, \eta}$ (\ref{defofxne}) into (\ref{58}) and combining these, we obtain the following result:
\begin{prop}
Under the assumption of $BV[\theta ,{\cal E}_{2}]$, we have
\begin{equation}
\label{60}
\begin{aligned}
S_{2, I\hspace{-.em}I\hspace{-.em}I}^{(m)}=& \frac{\varphi (W)^{k}N(\log R)^{k+1}}{W^{k+1}\log N}\left( \log \frac{1-\eta}{\eta} \right)(1+o(1))J_{k}^{(m)}(F) \\
 &\quad +O\left( \frac{F_{\mathrm{max}}^{2}\varphi (W)^{k}N(\log N)^{k}}{W^{k+1}D_{0}} \right) +O_{B}\left( \frac{F_{\mathrm{max}}^{2}N}{(\log N)^{B}} \right)
\end{aligned}
\end{equation}
as $N\to \infty$, where 
\begin{equation}
\label{61}
J_{k}^{(m)}(F)=\int _{0}^{1}\cdots \int _{0}^{1}\left( \int _{0}^{1}F(t_{1},\cdots ,t_{k})dt_{m}\right)^{2}dt_{1}\cdots dt_{m-1}dt_{m+1}\cdots dt_{k}.
\end{equation}
\end{prop}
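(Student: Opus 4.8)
The plan is to follow the route sketched in the paragraphs preceding the statement, which runs exactly parallel to Maynard's computation of $S_{0}$ in \cite{M1}, with the uniform count of $E_{2}$-numbers in coprime residue classes (Lemma 4.1) playing the role that the trivial count $\#\{N\le n<2N:(n,q)=1\}=N\varphi(q)/q+O(\tau(q))$ plays there. Concretely, I start from the definition (\ref{55}) and restrict the outer double sum to those $(d_{1},\dots,d_{k},e_{1},\dots,e_{k})$ for which $W,[d_{1},e_{1}],\dots,[d_{k},e_{k}]$ are pairwise coprime, since by the choice of $\nu_{0}$ and the boundedness of $\mathcal{H}$ the inner sum over $n$ is empty otherwise. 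For each such tuple, writing $q=W\prod_{i=1}^{k}[d_{i},e_{i}]$, the Chinese Remainder Theorem collapses the inner sum to a sum over a single residue class $\nu\ (\mathrm{mod}\ q)$, and after the shift $n\mapsto n+h_{m}$ this becomes $\sum_{N<n\le 2N,\ n\equiv\nu'\ (\mathrm{mod}\ q)}\beta(n)+O(1)$ with $\nu'=\nu+h_{m}$; as in Section 3 one checks $(\nu',q)=1$.

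Next I would invoke Lemma 4.1, valid uniformly for $q\le N$, which applies since here $q\le R^{2}W<N$; it gives that the inner sum equals $X_{N,\eta}/\varphi(q)+\Delta_{\beta}(N;q,\nu')+O(1)$. Substituting back splits $S_{2,I\hspace{-.em}I\hspace{-.em}I}^{(m)}$ into a main term proportional to $X_{N,\eta}\varphi(W)^{-1}\sum'\lambda_{d}\lambda_{e}/\prod_{i}\varphi([d_{i},e_{i}])$ (the primed sum being over pairwise coprime moduli, with $d_{m}=e_{m}=1$) and an error term controlled by $\sum'|\lambda_{d}\lambda_{e}|(|\Delta_{\beta}(N;q,\nu')|+1)$. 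The error term is handled exactly as in \cite{M1}: expand via Cauchy--Schwarz, bound the multiplicity of each conductor $q\le R^{2}W$ by $\tau_{3k}(q)$, use the trivial bound $\Delta_{\beta}^{*}(N;q)\ll X_{N,\eta}/\varphi(q)$ together with $BV[\theta,E_{2}]$ to extract square-root cancellation, and recall $\lambda_{\mathrm{max}}\ll y_{\mathrm{max}}(\log R)^{k}$; the outcome is $O_{B}(Ny_{\mathrm{max}}^{2}/(\log N)^{B})$.

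It then remains to evaluate the main term. The sum $\sum'_{d_{m}=e_{m}=1}\lambda_{d_{1},\dots,d_{k}}\lambda_{e_{1},\dots,e_{k}}/\prod_{i=1}^{k}\varphi([d_{i},e_{i}])$ is precisely the quantity computed in the proof of Lemma 5.2 of \cite{M1}, equal to $\sum_{u_{1},\dots,u_{k}}(y_{u_{1},\dots,u_{k}}^{(m)})^{2}/\prod_{i=1}^{k}g(u_{i})$ up to an acceptable $D_{0}$-savings error, and Lemma 6.3 of \cite{M1} evaluates this last sum as $\varphi(W)^{k}(\log R)^{k+1}W^{-k-1}J_{k}^{(m)}(F)$ plus a $D_{0}$-savings error. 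Feeding in the definition (\ref{defofxne}) of $X_{N,\eta}$, multiplying out, using $y_{\mathrm{max}}\ll F_{\mathrm{max}}$ and $y_{\mathrm{max}}^{(m)}\ll F_{\mathrm{max}}\varphi(W)(\log N)/W$, and comparing the sizes of the three error contributions yields the stated formula.

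As for the main obstacle: there is essentially no new structural difficulty, since all the sieve bookkeeping has already been done by Maynard; the only genuinely new input is Lemma 4.1, and the point requiring care is bookkeeping of error sizes. Its secondary term $O(N\log\log N/(\log N)^{2})$ is smaller than the main term $\tfrac{N}{\log N}\log\frac{1-\eta}{\eta}$ by a factor $\log\log N/\log N$, which is $o(1/D_{0})$ since $D_{0}=\log\log\log N$; hence after division by $\varphi(q)$ and summation against $|\lambda_{d}\lambda_{e}|$ it is dominated by the $O\!\left(F_{\mathrm{max}}^{2}\varphi(W)^{k}N(\log N)^{k}/(W^{k+1}D_{0})\right)$ term already present, and the $(1+o(1))$ in the statement absorbs it. Everything else — the coprimality reductions, the passage to the $y^{(m)}$ variables, and the final integral evaluation — is routine and identical to \cite{M1}.
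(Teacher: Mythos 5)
Your proposal follows the paper's proof step for step: restrict to pairwise coprime moduli, collapse by CRT, invoke Lemma 4.1 to replace the inner sum by $X_{N,\eta}/\varphi(q)+\Delta_{\beta}+O(1)$, handle the $\Delta_{\beta}$ contribution via Cauchy--Schwarz and $BV[\theta,E_{2}]$, and evaluate the main term using Lemma 5.2 and Lemma 6.3 of Maynard, with the secondary term of $X_{N,\eta}$ absorbed by the $(1+o(1))$. The only blemish is a typographical slip (the evaluation from Maynard's Lemma 6.3 should carry $\varphi(W)^{k+1}$ rather than $\varphi(W)^{k}$), which does not affect the argument.
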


\section{The computation of $S_{2, I\hspace{-.em}V}^{(m)}$} 
The next problem is to compute
\begin{equation}
\label{62}
S_{2, I\hspace{-.em}V}^{(m)}:=\sum _{Y<p< R}\underset{d_{m}=e_{m}=p}{\underset{e_{1},\cdots ,e_{k}}{\sum _{d_{1}, \cdots ,d_{k}}}}\lambda _{d_{1}, \cdots ,d_{k}}\lambda _{e_{1}, \cdots ,e_{k}}\underset{[d_{i}, e_{i}]|n+h_{i} (\forall i)}{\underset{n\equiv \nu _{0} (\mathrm{mod}\; W)}{\sum _{N\leq n<2N}}}\beta (n+h_{m}).
\end{equation}
The only pairs $(d_{1}, \cdots ,d_{k}, e_{1}, \cdots ,e_{k})$ satisfying the condition that $W, [d_{1}, e_{1}]$, $\cdots$, $[d_{k},e_{k}]$ are pairwise coprime contribute to the sum above. We put
\[
q=W\prod _{i=1}^{k}[d_{i}, e_{i}].
\]
Then, 
\begin{equation}
\label{63}
\underset{[d_{i}, e_{i}]|n+h_{i} (\forall i)}{\underset{n\equiv \nu _{0} (\mathrm{mod}\; W)}{\sum _{N\leq n<2N}}}\beta (n+h_{m})=\underset{n \equiv \nu \; (\mathrm{mod}\; q)}{\sum _{N\leq n<2N}}\beta (n+h_{m})
\end{equation}
for some $\nu \; (\mathrm{mod}\; q)$, given in Section 3. The right hand side of (\ref{63}) is given by (\ref{15}). Combining this and
\[
\frac{1}{\varphi (\frac{q}{p})}=\frac{\varphi (p)}{\varphi (q)}=\frac{p-1}{\varphi (q)},
\]
we obtain
\begin{equation}
\label{64}
\begin{aligned}
S_{2, I\hspace{-.em}V }^{(m)}=& \frac{1}{\varphi (W)}\sum _{Y<p < R}(p-1)\pi ^{\flat}\left( \frac{N}{p} \right) \underset{d_{m}=e_{m}=p}{\underset{e_{1}, \cdots ,e_{k}}{\sum _{d_{1}, \cdots ,d_{k}}^{\quad\quad '}}}\frac{\lambda _{d_{1}, \cdots ,d_{k}}\lambda _{e_{1},\cdots ,e_{k}}}{\prod _{i=1}^{k}\varphi ([d_{i}, e_{i}])} \\
&\quad+O\left( \sum _{Y<p< R} \underset{d_{m}=e_{m}=p}{\underset{e_{1}, \cdots ,e_{k}}{\sum ^{\quad\quad '} _{d_{1}, \cdots ,d_{k}}}}|\lambda _{d_{1}, \cdots ,d_{k}}\lambda _{e_{1}, \cdots ,e_{k}}| \left( \left| \Delta \left( \frac{N}{p}; \frac{q}{p}, \nu _{m}^{'} \right) \right| +1 \right)       \right), 
\end{aligned}
\end{equation}
where the sum $\sum ^{\quad '}$ implies that $d_{1}, \cdots ,d_{k}, e_{1}, \cdots ,e_{k}$ are restricted to those satisfying the condition that $W, [d_{1}, e_{1}], \cdots ,[d_{k}, e_{k}]$ are pairwise coprime. The error term is, under the assumption of $BV[\theta ,{\cal P}]$, evaluated by 
\begin{equation}
\label{64.5}
\ll _{B} \frac{Ny_{\mathrm{max}}^{2}}{(\log N)^{B}}
\end{equation}
for any $B>0$. The proof is almost the same as that in Section 3, hence we omit this. Next, we compute the sum over $d_{1}, \cdots ,d_{k}, e_{1},\cdots ,e_{k}$.  By the similar way as (\ref{19}), we obtain 
\begin{equation}
\label{65}
\begin{aligned}
\underset{d_{m}=e_{m}=p}{\underset{e_{1}, \cdots ,e_{k}}{\sum _{d_{1}, \cdots ,d_{k}}^{\quad\quad '}}}\frac{\lambda _{d_{1}, \cdots ,d_{k}}\lambda _{e_{1}, \cdots ,e_{k}}}{\prod _{i=1}^{k}\varphi ([d_{i}, e_{i}])}=& \sum _{u_{1},\cdots ,u_{k}}\prod _{i=1}^{k}g(u_{i})\sum _{s_{1,2}, \cdots ,s_{k,k-1}}^{\quad\quad *}\left( \prod _{1\leq i\neq j\leq k}\mu (s_{i,j}) \right) \\
&\quad \quad \quad \quad \quad  \times \underset{d_{m}=e_{m}=p}{\underset{s_{i,j}|d_{i},e_{j} (i\neq j)}{\underset{u_{i}|d_{i}, e_{i} (\forall i)}{\underset{e_{1}, \cdots ,e_{k}}{\sum _{d_{1}, \cdots ,d_{k}}}}}}   \frac{\lambda _{d_{1}, \cdots ,d_{k}}\lambda _{e_{1}, \cdots ,e_{k}}}{\prod _{i=1}^{k}\varphi (d_{i})\varphi (e_{i})}.
\end{aligned}
\end{equation}
Using the function $y_{r_{1},\cdots ,r_{k}}^{(m)}(p)$ defined by (\ref{20}), the right hand side of (\ref{65}) is expressed by
\begin{equation}
\label{66}
\sum _{u_{1},\cdots ,u_{k}}\left( \prod _{i=1}^{k}\frac{\mu ^{2}(u_{i})}{g(u_{i})} \right)\sum _{s_{1,2}, \cdots ,s_{k,k-1}}^{\quad\quad *} \left(\prod _{1\leq i \neq j \leq k}\frac{\mu (s_{i,j})}{g(s_{i,j})^{2}} \right)y_{a_{1}, \cdots ,a_{k}}^{(m)}(p)y_{b_{1},\cdots ,b_{k}}^{(m)}(p),
\end{equation}
where $a_{i}=u_{i}\prod _{j\neq i}s_{i,j}$, $b_{i}=u_{i}\prod _{j\neq i}s_{j,i}$. Since $a_{m}=p$ implies $p=u_{m}$ or $s_{m,j}$ $(\exists j)$ and $b_{m}=p$ implies $p=u_{m}$ or $s_{j,m}$ $(\exists j)$, the contribution of the terms with $s_{i,j}\neq 1$ (hence $s_{i,j}>D_{0}$) is at most 
\begin{align*}
\sum _{\mathfrak{p}_{1}, \mathfrak{p}_{2}=1 \; or \; p} \sum _{u_{1},\cdots ,u_{k}}\prod _{i=1}^{k}\frac{\mu ^{2}(u_{i})}{g(u_{i})} & \underset{(i^{'},j^{'})\neq (i,j)}{\sum _{s_{i^{'}, j^{'}}}}  \underset{(i^{'},j^{'})\neq (i,j)}{\prod _{1\leq i^{'}\neq j^{'}\leq k}}\frac{\mu ^{2}(s_{i^{'}, j^{'}})}{g(s_{i^{'},j^{'}})^{2}} \\
 \times &\left( \sum _{s_{i,j}>D_{0}}\frac{\mu ^{2} (s_{i,j})}{g(s_{i,j})^{2}} \right)y_{a_{1},\cdots ,a_{k}}^{(m)}(p)|_{a_{m}=\mathfrak{p}_{1}}y_{b_{1},\cdots ,b_{k}}^{(m)}(p)|_{b_{m}=\mathfrak{p}_{2}}, 
\end{align*}
which is bounded by
\begin{equation}
\label{67}
\begin{aligned}
&\ll y_{\mathrm{max}}^{(m)}(p)_{r_{m}=1}^{2}\left( \frac{\varphi (W)}{W}\log R\right)^{k-1}\cdot 1 \cdot D_{0}^{-1} \\
& \quad\quad +\left( y_{\mathrm{max}}^{(m)}(p)_{r_{m}=1}y_{\mathrm{max}}^{(m)}(p)_{r_{m}=p}+y_{\mathrm{max}}^{(m)}(p)_{r_{m}=p}^{2} \right)\cdot \frac{1}{p}\left( \frac{\varphi (W)}{W}\log R \right)^{k-1}\cdot 1\cdot D_{0}^{-1} \\
&\ll \frac{\varphi (W)^{k-1}(\log R)^{k-1}}{W^{k-1}D_{0}}y_{\mathrm{max}}^{(m)}(p)_{r_{m}=1}^{2} \\
& \quad\quad +\frac{\varphi (W)^{k-1}(\log R)^{k-1}}{pW^{k-1}D_{0}}\max \{ y_{\mathrm{max}}^{(m)}(p)_{r_{m}=1},\; y_{\mathrm{max}}^{(m)}(p)_{r_{m}=p} \}^{2}.
\end{aligned}
\end{equation}
Combining (\ref{65}), (\ref{66}) and (\ref{67}), we have 
\begin{equation}
\label{68}
\begin{aligned}
\underset{d_{m}=e_{m}=p}{\underset{e_{1}, \cdots ,e_{k}}{\sum _{d_{1}, \cdots ,d_{k}}^{\quad\quad '}}}\frac{\lambda _{d_{1}, \cdots ,d_{k}}\lambda _{e_{1}, \cdots ,e_{k}}}{\prod _{i=1}^{k}\varphi ([d_{i}, e_{i}])} =& \sum _{u_{1},\cdots ,u_{k}}\frac{y_{u_{1},\cdots ,u_{k}}^{(m)}(p)^{2}}{\prod _{i=1}^{k}g(u_{i})} \\
& \quad +O\left( \frac{\varphi (W)^{k-1}(\log R)^{k-1}}{W^{k-1}D_{0}} y_{\mathrm{max}}^{(m)}(p)_{r_{m}=1}^{2}\right) \\
& \quad  +O\left( \frac{\varphi (W)^{k-1}(\log R)^{k-1}}{pW^{k-1}D_{0}}Y^{(m)}(p)^{2} \right),
\end{aligned}
\end{equation}
where 
\[
Y^{(m)}(p)=\max \{y_{\mathrm{max}}^{(m)}(p)_{r_{m}=1},\; y_{\mathrm{max}}^{(m)}(p)_{r_{m}=p} \}.
\]
By substituting  (\ref{68}) into (\ref{64}) and combining (\ref{64.5}), we obtain
\begin{equation}
\label{A}
\begin{aligned}
S_{2, I\hspace{-.em}V}^{(m)}=& \frac{1}{\varphi (W)}\sum _{Y<p< R}(p-1)\pi ^{\flat}\left( \frac{N}{p} \right) \sum _{u_{1},\cdots ,u_{k}}\frac{y_{u_{1},\cdots ,u_{k}}^{(m)}(p)^{2}}{\prod _{i=1}^{k}g(u_{i})} \\
&\quad +O\left( \frac{N\varphi (W)^{k-2}(\log N)^{k-2}}{W^{k-1}D_{0}}\sum _{Y<p< R}\left( y_{\mathrm{max}}^{(m)}(p)_{r_{m}=1}^{2}+\frac{Y^{(m)}(p)^{2}}{p}\right) \right) \\
& \quad +O_{B}\left( \frac{Ny_{\mathrm{max}}^{2}}{(\log N)^{B}} \right).
\end{aligned}
\end{equation}
Moreover, by the estimates (\ref{31}), (\ref{32}) and $\sum _{Y<p< R}1/p \ll_{\eta}1$, the first error term of (\ref{A}) is at most

\[
\frac{N\varphi (W)^{k-2}(\log N)^{k-2}}{W^{k-1}D_{0}}\cdot \frac{y_{\mathrm{max}}^{2}\varphi (W)^{2}(\log N)^{2}}{W^{2}}=\frac{y_{\mathrm{max}}^{2}N\varphi (W)^{k}(\log N)^{k}}{W^{k+1}D_{0}}.
\]
Since 
\[
(p-1) \pi ^{\flat} \left( \frac{N}{p} \right) =\frac{N}{\log \frac{N}{p}} +O\left( \frac{N}{\log ^{2}N} \right),
\]
if we replace the factor $(p-1)\pi ^{\flat}(N/p)$ in (\ref{A}) with $N/(\log N/p)$, the possible error is at most 
\begin{align*}
& \frac{1}{\varphi (W)}\cdot \frac{N}{(\log N)^{2}}\sum _{Y<p < R}\sum _{u_{1}, \cdots ,u_{k}}\frac{y_{u_{1}, \cdots ,u_{k}}^{(m)}(p)^{2}}{\prod _{i=1}^{k}g(u_{i})} \\
& \ll \frac{N}{\varphi (W)(\log N)^{2}}\sum _{Y<p < R}\left\{ \frac{y_{\mathrm{max}}^{(m)}(p)|_{r_{m}=p}^{2}}{p}+y_{\mathrm{max}}^{(m)}(p)|_{r_{m}=1}^{2} \right\}  \left( \underset{(u,W)=1}{\sum _{u< R}}\frac{1}{g(u)} \right)^{k-1} \\
& \ll \frac{N}{\varphi (W)(\log N)^{2}}\sum _{Y<p < R}\left( \frac{y_{\mathrm{max}}^{2}\varphi (W)^{2}\log ^{2}N}{pW^{2}} +\frac{y_{\mathrm{max}}^{2}\varphi (W)^{2}\log ^{2}N}{p^{2}W^{2}} \right) \\
&\quad\quad\quad\quad\quad\quad\quad\quad\quad\quad\quad\quad\quad\quad\quad\quad\quad\quad\quad\quad\quad\quad\quad\quad \times \left( \frac{\varphi (W)\log N}{W} \right)^{k-1} \\
& \ll \frac{y_{\mathrm{max}}^{2}N(\log N)^{k-1}\varphi (W)^{k}}{W^{k+1}},
\end{align*}
which is dominated by the error term above.  Therefore, 
\begin{equation}
\label{69}
\begin{aligned}
S_{2, I\hspace{-.em}V}^{(m)}=& \frac{N}{\varphi (W)}\sum _{Y<p< R}\frac{1}{\log \frac{N}{p}}\sum _{u_{1},\cdots ,u_{k}}\frac{y_{u_{1},\cdots ,u_{k}}^{(m)}(p)^{2}}{\prod _{i=1}^{k}g(u_{i})} \\
&\quad +O\left(  \frac{y_{\mathrm{max}}^{2}N\varphi (W)^{k}(\log N)^{k}}{W^{k+1}D_{0}} \right)+O_{B}\left( \frac{Ny_{\mathrm{max}}^{2}}{(\log N)^{B}} \right).
\end{aligned}
\end{equation}
Let us compute the sum over $u_{1}, \cdots ,u_{k}$ in the main term of (\ref{69}) by using Lemma 3.3. Let $u_{m}$ be $1$ or $p$. Then, by (\ref{30}), we have
\begin{align*}
&y_{u_{1}, \cdots ,u_{k}}^{(m)}(p)^{2} \\
&=\left\{ -\mu (u_{m})g(u_{m})\underset{p|a_{m}}{\sum _{a_{m}}}\frac{y_{u_{1}, \cdots ,u_{m-1},a_{m},u_{m+1}, \cdots ,u_{k}}}{\varphi (a_{m})} +O\left( \frac{y_{\mathrm{max}}g(u_{m})\varphi (W)\log \frac{R}{p}}{WD_{0}\varphi (p)} \right) \right\} ^{2} \\
&=g(u_{m})^{2}\left( \underset{p|a_{m}}{\sum _{a_{m}}}\frac{y_{u_{1}, \cdots ,u_{m-1},a_{m},u_{m+1}, \cdots ,u_{k}}}{\varphi (a_{m})} \right)^{2}+O\left( \frac{y_{\mathrm{max}}^{2}g(u_{m})^{2}\varphi (W)^{2}(\log \frac{R}{p})^{2}}{W^{2}D_{0}\varphi (p)^{2}}\right).
\end{align*}
Therefore, by taking the sum over $u_{1},\cdots ,u_{k}$, we obtain
\begin{equation}
\label{70}
\begin{aligned}
&\sum _{u_{1}, \cdots ,u_{k}}\frac{y_{u_{1}, \cdots ,u_{k}}^{(m)}(p)^{2}}{\prod _{i=1}^{k}g(u_{i})} \\
&\quad =\sum _{u_{m}=1, p}g(u_{m})\sum _{u_{1},\cdots ,u_{m-1},u_{m+1},\cdots ,u_{k}}\frac{1}{\prod _{i\neq m}g(u_{i})}\left( \underset{p|a_{m}}{\sum _{a_{m}}}\frac{y_{u_{1}, \cdots ,u_{m-1},a_{m},u_{m+1}, \cdots ,u_{k}}}{\varphi (a_{m})} \right)^{2} \\
&\quad\quad\quad +O\left( \frac{y_{\mathrm{max}}^{2}\varphi (W)^{k+1}(\log N)^{k+1}}{W^{k+1}D_{0}p }\right).
\end{aligned}
\end{equation}
By (\ref{34}), if $u_{1}, \cdots ,u_{k}$ satisfy the conditions that $(u_{i},u_{j})=1$ ($i\neq j$), $(u_{i},pW)=1$ ($\forall i\neq m)$, $u_{1}, \cdots ,u_{k}$ are square-free, we have
\begin{equation}
\label{71}
\begin{aligned}
&\left( \underset{p|a_{m}}{\sum _{a_{m}}}\frac{y_{u_{1},\cdots ,u_{m-1},a_{m},u_{m+1}, \cdots ,u_{k}}}{\varphi (a_{m})} \right)^{2} \\
&\quad\quad =\frac{\varphi (W)^{2}}{p^{2}W^{2}}\left(\log \frac{R}{p} \right)^{2}\prod _{i\neq m}\frac{\varphi (u_{i})^{2}}{u_{i}^{2}} \\
&\quad\quad\quad\quad\quad \times \left( \int _{0}^{1}F_{p}^{[m]}\left( \frac{\log u_{1}}{\log R}, \cdots ,\frac{\log u_{m-1}}{\log R}, u, \frac{\log u_{m+1}}{\log R}, \cdots ,\frac{\log u_{k}}{\log R} \right) du \right)^{2} \\
&\quad \quad \quad\quad   +O\left( \frac{\varphi (W)^{2}F_{\mathrm{max}}^{2}\log \frac{R}{p} \log \log N}{p^{2}W^{2}} \right).
\end{aligned}
\end{equation}
We substitute (\ref{71}) into (\ref{70}). Then, the contribution of the error term is at most 
\begin{align*}
& \sum _{u_{m}=1, p}g(u_{m})\left( \sum _{u_{1},\cdots, u_{m-1},u_{m+1}, \cdots ,u_{k}}\frac{1}{\prod _{i\neq m}g(u_{i})}\right)\cdot \frac{F_{\mathrm{max}}^{2}\varphi (W)^{2}\log \frac{R}{p}\log \log N}{p^{2}W^{2}} \\
& \quad \ll \frac{F_{\mathrm{max}}^{2}\varphi (W)^{k+1}(\log N)^{k}\log \log N}{pW^{k+1}}, 
\end{align*}
which is dominated by the error term of (\ref{70}). Therefore, 
\begin{equation}
\label{72}
\begin{aligned}
&\sum _{u_{1},\cdots ,u_{k}}\frac{y_{u_{1}, \cdots , u_{k}}^{(m)}(p)^{2}}{\prod _{i=1}^{k}g(u_{i})} \\
&\quad =\frac{\varphi (W)^{2}}{p^{2}W^{2}}\left( \log \frac{R}{p}\right)^{2}\sum _{u_{m}=1, p}g(u_{m}) \sum _{u_{1}, \cdots ,u_{m-1},u_{m+1},\cdots ,u_{k}}^{\quad\quad '}\prod _{i\neq m}\frac{\varphi (u_{i})^{2}}{u_{i}^{2}g(u_{i})} \\
&\quad\quad\quad\quad\quad\quad \times \left( \int _{0}^{1}F_{p}^{[m]}\left( \frac{\log u_{1}}{\log R}, \cdots ,\frac{\log u_{m-1}}{\log R}, u, \frac{\log u_{m+1}}{\log R}, \cdots ,\frac{\log u_{k}}{\log R} \right) du \right)^{2}\\
& \quad \quad +O\left( \frac{F_{\mathrm{max}}^{2}\varphi (W)^{k+1}(\log N)^{k+1}}{W^{k+1}D_{0}p} \right).
\end{aligned}
\end{equation}
The sum $\sum ^{'}$ indicates that $u_{1}, \cdots ,u_{m-1}, u_{m+1},\cdots ,u_{k}$ are restricted to those satisfying the conditions above.  In (\ref{72}), the contribution of the terms with $u_{m}=1$ are dominated by the error term. Hence  only the terms with $u_{m}=p$ contribute to the main term, and since 
\[
\frac{g(p)}{p^{2}}=\frac{1}{p}+O\left( \frac{1}{p^{2}} \right)=\frac{1}{p}+O\left( p^{-1}N^{-\eta} \right),
\]
if we replace the factor $g(p)/p^{2}$ in (\ref{72}) with $1/p$, the possible error is dominated by the error term of (\ref{72}). Hence we have 
\begin{equation}
\label{73}
\begin{aligned}
&\sum _{u_{1},\cdots ,u_{k}}\frac{y_{u_{1}, \cdots , u_{k}}^{(m)}(p)^{2}}{\prod _{i=1}^{k}g(u_{i})} \\
&\quad =\frac{\varphi (W)^{2}}{pW^{2}}\left( \log \frac{R}{p}\right)^{2} \sum _{u_{1}, \cdots ,u_{m-1},u_{m+1},\cdots ,u_{k}}^{\quad\quad '}\prod _{i\neq m}\frac{\mu ^{2}(u_{i})\varphi (u_{i})^{2}}{u_{i}^{2}g(u_{i})} \\
&\quad\quad\quad\quad\quad\quad \times \left( \int _{0}^{1}F_{p}^{[m]}\left( \frac{\log u_{1}}{\log R}, \cdots ,\frac{\log u_{m-1}}{\log R}, u, \frac{\log u_{m+1}}{\log R}, \cdots ,\frac{\log u_{k}}{\log R} \right) du \right)^{2}\\
& \quad \quad +O\left( \frac{F_{\mathrm{max}}^{2}\varphi (W)^{k+1}(\log N)^{k+1}}{W^{k+1}D_{0}p} \right).
\end{aligned}
\end{equation}
We remove the condition that $(u_{i}, u_{j})=1$ for $i \neq j$. If $(u_{i},u_{j})>1$, there exists a prime $q>D_{0}$ for which $q|u_{i}, u_{j}$. Therefore, the  difference is at most 
\begin{align*}
&\frac{\varphi (W)^{2}(\log N)^{2}F_{\mathrm{max}}^{2}}{pW^{2}}\left( \sum _{q>D_{0}}\frac{\varphi (q)^{4}}{g(q)^{2}q^{4}}\right) \left( \underset{(u,W)=1}{\sum _{u<R}}\frac{\varphi (u)^{2}}{u^{2}g(u)}\right)^{k-1} \\
& \ll \frac{F_{\mathrm{max}}^{2}\varphi (W)^{k+1}(\log N)^{k+1}}{W^{k+1}D_{0}p}.
\end{align*}
Therefore, 
\begin{equation}
\label{74}
\begin{aligned}
&\sum _{u_{1},\cdots ,u_{k}}\frac{y_{u_{1}, \cdots , u_{k}}^{(m)}(p)^{2}}{\prod _{i=1}^{k}g(u_{i})} \\
&\quad =\frac{\varphi (W)^{2}}{pW^{2}}\left( \log \frac{R}{p}\right)^{2} \underset{(u_{i},pW)=1 (\forall i\neq m)}{\sum _{u_{1}, \cdots ,u_{m-1},u_{m+1},\cdots ,u_{k}}}\prod _{i\neq m}\frac{\mu ^{2}(u_{i})\varphi (u_{i})^{2}}{u_{i}^{2}g(u_{i})} \\
&\quad\quad\quad\quad\quad\quad \times \left( \int _{0}^{1}F_{p}^{[m]}\left( \frac{\log u_{1}}{\log R}, \cdots ,\frac{\log u_{m-1}}{\log R}, u, \frac{\log u_{m+1}}{\log R}, \cdots ,\frac{\log u_{k}}{\log R} \right) du \right)^{2}\\
& \quad\quad +O\left( \frac{F_{\mathrm{max}}^{2}\varphi (W)^{k+1}(\log N)^{k+1}}{W^{k+1}D_{0}p} \right).
\end{aligned}
\end{equation}
We apply Lemma 3.4 with 
\begin{eqnarray}
\gamma (q)=\left\{ \begin{array}{ll}
 1-\frac{q^{2}-3q+1}{q^{3}-q^{2}-2q+1} & (q\mid \hspace{-.67em}/ pW) \\
 0 & (\mathrm{otherwise}) \\
\end{array} \right.
\end{eqnarray} 
to the sum over $u_{1},\cdots ,u_{m-1},u_{m+1}, \cdots ,u_{k}$. In Section 3, we proved that 
\[
L\ll \log D_{0},\quad  \mathfrak{S}=\frac{\varphi (p)\varphi (W)}{pW}(1+O(D_{0}^{-1})).
\]
Therefore, by the similar way as (\ref{42}), we find that
\begin{equation}
\label{75}
\begin{aligned}
& \underset{(u_{i},pW)=1 (\forall i\neq m)}{\sum _{u_{1}, \cdots ,u_{m-1},u_{m+1},\cdots ,u_{k}}}\prod _{i\neq m}\frac{\mu ^{2}(u_{i})\varphi (u_{i})^{2}}{u_{i}^{2}g(u_{i})}\\&\quad\quad\quad\quad\quad \times \left( \int _{0}^{1}F_{p}^{[m]}\left( \frac{\log u_{1}}{\log R}, \cdots ,\frac{\log u_{m-1}}{\log R}, u, \frac{\log u_{m+1}}{\log R}, \cdots ,\frac{\log u_{k}}{\log R} \right) du \right)^{2}\\
&\quad =\frac{\varphi (p)^{k-1}\varphi (W)^{k-1}}{p^{k-1}W^{k-1}}(\log R)^{k-1}  \\
&\quad \times \int _{0}^{1}\cdots \int _{0}^{1}\left( \int _{0}^{1}F_{p}^{[m]}(u_{1},\cdots ,u_{m-1},u,u_{m+1}, \cdots ,u_{k})du \right)^{2}du_{1}\cdots du_{m-1}du_{m+1}\cdots du_{k} \\
& \quad\quad \quad  +O\left( \frac{F_{\mathrm{max}}^{2}\varphi (p)^{k-1}\varphi (W)^{k-1}(\log N)^{k-1}}{p^{k-1}W^{k-1}D_{0}} \right).
\end{aligned}
\end{equation}
By substituting  (\ref{75}) into (\ref{74}) and replacing the factor $\varphi (p)^{k-1}/p^{k}$ with $1/p$ (the possible error is sufficiently small), we have 
\begin{equation}
\label{76}
\begin{aligned}
&\sum _{u_{1}, \cdots ,u_{k}}\frac{y_{u_{1}, \cdots ,u_{k}}^{(m)}(p)^{2}}{\prod _{i=1}^{k}g(u_{i})} \\
&\quad =\frac{ \varphi (W)^{k+1}}{pW^{k+1}}(\log R)^{k-1}\left( \log \frac{R}{p} \right) ^{2} \\
&\quad\quad \times \int _{0}^{1}\cdots \int _{0}^{1}\left( \int _{0}^{1}F_{p}^{[m]}(u_{1},\cdots ,u_{m-1},u,u_{m+1}, \cdots ,u_{k})du \right)^{2}du_{1}\cdots du_{m-1}du_{m+1}\cdots du_{k} \\
&\quad\quad +O\left( \frac{F_{\mathrm{max}}^{2}\varphi (W)^{k+1}(\log N)^{k+1}}{W^{k+1}D_{0}p} \right).
\end{aligned}
\end{equation}
We substitute this into (\ref{69}). Our next purpose is to compute the sum over $p$. For any smooth function $f$, the sum
\begin{equation}
\label{77}
\sum _{Y<p< R}\frac{1}{p}\left( \log \frac{R}{p} \right)^{2}\left( \log \frac{N}{p}\right)^{-1}f\left( \frac{\log \frac{R}{p}}{\log R}\left( \frac{\log p}{\log \frac{R}{p}}+u \right) \right) 
\end{equation}
is expressed by 
\[
\int _{N^{\eta}}^{N^{\frac{\theta}{2}-\delta}}\frac{1}{v}\left( \log \frac{R}{v} \right)^{2}\left( \log \frac{N}{v}\right)^{-1}f\left( \frac{\log \frac{R}{v}}{\log R}\left( \frac{\log v}{\log \frac{R}{v}}+u \right) \right) d\pi (v),
\]
which is asymptotically 
\[
\int _{N^{\eta}}^{N^{\frac{\theta}{2}-\delta}}\frac{1}{v}\left( \log \frac{R}{v} \right)^{2}\left( \log \frac{N}{v}\right)^{-1}f\left( \frac{\log \frac{R}{v}}{\log R}\left( \frac{\log v}{\log \frac{R}{v}}+u \right) \right) \frac{dv}{\log v}.
\]
By putting $\log v/\log N=\xi$, this becomes 
\[
\log N \int _{\eta}^{\frac{\theta}{2}-\delta}\frac{(\frac{\theta}{2}-\delta -\xi)^{2}}{1-\xi}f\left(\frac{\xi}{\frac{\theta}{2}-\delta}+\frac{\frac{\theta}{2}-\delta -\xi}{\frac{\theta}{2}-\delta}u  \right)\frac{d\xi}{\xi}.
\]
We put 
\begin{align*}
&M_{k, \delta}^{[m]}(\xi)\\
&\quad = \int _{0}^{1}\cdots \int _{0}^{1}\left( \int _{0}^{1}F_{m,\delta}(u_{1},\cdots ,u_{k};\xi)du_{m}\right)^{2}du_{1}\cdots du_{m-1}du_{m+1}\cdots du_{k},
\end{align*}
where the function $F_{m,\delta}$ is defined by (\ref{47}). Then, by applying the consequence of the  above argument to (\ref{76}), we have
\begin{equation}
\label{79}
\begin{aligned}
&\sum _{Y<p < R}\frac{1}{\log \frac{N}{p}}\sum _{u_{1},\cdots ,u_{k}}\frac{y_{u_{1},\cdots ,u_{k}}^{(m)}(p)^{2}}{\prod _{i=1}^{k}g(u_{i})} \\
&\quad =\frac{\varphi (W)^{k+1}(\log R)^{k-1}\log N}{W^{k+1}}(1+o(1))\int _{\eta}^{\frac{\theta}{2}-\delta}\frac{(\frac{\theta}{2}-\delta-\xi)^{2}}{1-\xi}M_{k,\delta}^{[m]}(\xi)\frac{d\xi}{\xi} \\
&\quad\quad +O\left( \frac{F_{\mathrm{max}}^{2}\varphi (W)^{k+1}(\log N)^{k}}{W^{k+1}D_{0}} \right),
\end{aligned}
\end{equation}
if the integral is not zero.  We substitute this into (\ref{69}).  Consequently we obtain the following result: 
\begin{prop}
Assuming $BV[\theta ,{\cal P}]$, we have
\begin{equation}
\label{80}
\begin{aligned}
S_{2, I\hspace{-.em}V}^{(m)}=&\frac{\varphi (W)^{k}N\log N(\log R)^{k-1}}{W^{k+1}}(1+o(1))M_{k,\delta}^{(m)}(F) \\
&\quad\quad +O\left( \frac{F_{\mathrm{max}}^{2}\varphi (W)^{k}N(\log N)^{k}}{W^{k+1}D_{0}} \right)+O_{B}\left( \frac{F_{\mathrm{max}}^{2}N}{(\log N)^{B}}\right)
\end{aligned}
\end{equation}
as $N\to \infty$ if 
\[
M_{k,\delta}^{(m)}(F):=\int _{\eta}^{\frac{\theta}{2}-\delta}\frac{(\frac{\theta}{2}-\delta -\xi)^{2}}{1-\xi}M_{k,\delta}^{[m]}(\xi)\frac{d\xi}{\xi}\neq 0,
\]
where
\begin{align*}
&M_{k, \delta}^{[m]}(\xi)\\
&\quad = \int _{0}^{1}\cdots \int _{0}^{1}\left( \int _{0}^{1}F_{m,\delta}(u_{1},\cdots ,u_{k};\xi)du_{m}\right)^{2}du_{1}\cdots du_{m-1}du_{m+1}\cdots du_{k},
\end{align*}
\begin{align*}
&F_{m, \delta}(u_{1}, \cdots ,u_{k}; \xi) \\
&\quad\quad\quad :=F\left( u_{1},\cdots ,u_{m-1}, \frac{\xi}{\frac{\theta}{2}-\delta}+\frac{\frac{\theta}{2}-\delta -\xi}{\frac{\theta}{2}-\delta}u_{m}, u_{m+1},\cdots ,u_{k} \right).
\end{align*}
\end{prop}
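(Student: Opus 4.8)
The plan is simply to carry out the substitution that the computations (\ref{62})--(\ref{79}) have already set up: insert (\ref{79}) into (\ref{69}) and collect error terms. By (\ref{69}), up to the two errors displayed there, $S_{2,IV}^{(m)}$ equals $\tfrac{N}{\varphi(W)}$ times $\sum_{Y<p\le R}(\log\tfrac{N}{p})^{-1}\sum_{u_1,\dots,u_k}y^{(m)}_{u_1,\dots,u_k}(p)^2/\prod_i g(u_i)$, and (\ref{79}) evaluates this quantity as $\tfrac{\varphi(W)^{k+1}(\log R)^{k-1}\log N}{W^{k+1}}(1+o(1))M_{k,\delta}^{(m)}(F)$ plus an admissible error, provided $M_{k,\delta}^{(m)}(F)\neq 0$. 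Multiplying by $\tfrac{N}{\varphi(W)}$ produces the main term $\tfrac{\varphi(W)^{k}N(\log N)(\log R)^{k-1}}{W^{k+1}}(1+o(1))M_{k,\delta}^{(m)}(F)$ of the statement; when $M_{k,\delta}^{(m)}(F)=0$ this leading term is absorbed into the $O(\cdot/D_0)$ error, exactly as in the remark for $S_{2,I}^{(m)}$ at the end of Section 3.

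For completeness I would recall how the two ingredients arise. Equation (\ref{69}) comes from rewriting the inner sum over $n$ as a sum over one residue class $\nu\ (\mathrm{mod}\ q)$, $q=W\prod_i[d_i,e_i]$; since $d_m=e_m=p$ one has $(\nu+h_m,q)=p$, and writing $n+h_m=pn'$ turns $\beta(n+h_m)$ into $\chi_{{\cal P}}(n')$ on a progression modulo $q/p$ (here $p\le R<N^{1/2}$ forces $p$ to be the smaller prime factor), which is the same reduction as in Section 3 leading to (\ref{15}); the resulting error is bounded by (\ref{64.5}) from $BV[\theta,{\cal P}]$ via the Cauchy--Schwarz and divisor-bound argument of Section 3; then (\ref{65})--(\ref{68}) convert the $\lambda_d\lambda_e$-sum into $\sum y^{(m)}_{u}(p)^2/\prod g(u_i)$ with error controlled by (\ref{31})--(\ref{32}), and $(p-1)\pi^{\flat}(N/p)$ is replaced by $N/\log(N/p)$ by the Prime Number Theorem. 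Equation (\ref{79}) comes from inserting (\ref{30}) to express $y^{(m)}_{u}(p)^2$ through $\big(\sum_{p\mid a_m}y_{u_1,\dots,a_m,\dots,u_k}/\varphi(a_m)\big)^2$, applying (\ref{34}) (whence (\ref{71})) and Lemma 3.4 with the multiplicative weight $\gamma$ supported off $pW$ to reach (\ref{75}), and then converting the sum over $p$ into an integral in $\xi=\log v/\log N$ by the Prime Number Theorem, which produces the kernel $(\tfrac\theta2-\delta-\xi)^2/(1-\xi)$ and the quantity $M_{k,\delta}^{[m]}(\xi)$.

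What is left is just error bookkeeping, and that is the only place where a little care is needed. One verifies, using (\ref{31}), (\ref{32}) and $\sum_{Y<p\le R}1/p\ll_\eta 1$, that the first error term of (\ref{A}) is $\ll y_{\mathrm{max}}^2N\varphi(W)^k(\log N)^k/(W^{k+1}D_0)$; that replacing $(p-1)\pi^{\flat}(N/p)$ by $N/\log(N/p)$ costs an error of the same order; and that, with $y_{\mathrm{max}}\ll F_{\mathrm{max}}$ and $\log R\ll\log N$, these together with the $O(\cdot/D_0)$ and $O_B(\cdot)$ terms in (\ref{69}) and (\ref{79}) all collapse into the two error terms of the statement. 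No analytic input beyond $BV[\theta,{\cal P}]$, the Prime Number Theorem, and Lemma 3.4 enters; the potential obstacle of needing a level-of-distribution result for the $\beta$-function in this range does not occur, precisely because the factorization $n+h_m=pn'$ reduces the count to primes in arithmetic progressions.
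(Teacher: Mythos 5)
Your proposal is correct and follows essentially the same route as the paper: reduce the inner sum over $n$ to primes in a progression modulo $q/p$ via the factorization $n+h_m=pn'$, invoke $BV[\theta,\mathcal{P}]$ through the Cauchy--Schwarz/divisor argument of Section~3 to obtain (\ref{64.5}), pass to the diagonalized $y^{(m)}(p)$-form via (\ref{65})--(\ref{68}), and then insert (\ref{30}), (\ref{34}), Lemma~3.4, and the PNT-based $\xi$-integral to reach (\ref{79}), which substituted into (\ref{69}) gives (\ref{80}). Your observation that only $BV[\theta,\mathcal{P}]$ (not $BV[\theta,E_2]$) is needed here because the forced factor $p\le R<N^{1/2}$ must be the smaller prime of $n+h_m$ is exactly the point the paper relies on.
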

We note that if $M_{k,\delta}^{(m)}(F)=0$, the leading term vanishes and hence
\[
S_{2,I\hspace{-.em}V}^{(m)}=o \left( \frac{F_{\mathrm{max}}^{2}\varphi (W)^{k}N(\log N)^{k}}{W^{k+1}} \right)+O_{B}\left( \frac{NF_{\mathrm{max}}^{2}}{(\log N)^{B}} \right).
\]

\section{Conclusion}
To establish the small gaps between almost primes, we consider the sum 
\begin{equation}
\label{snrho}
\begin{aligned}
S(N, \rho)=&\sum _{m=1}^{k}S_{2}^{(m)}-\rho S_{0} \\
=&\sum _{m=1}^{k}\left( S_{2,I}^{(m)}+S_{2, I\hspace{-.em}I}^{(m)}+S_{2, I\hspace{-.em}I\hspace{-.em}I}^{(m)}+S_{2, I\hspace{-.em}V}^{(m)} \right)-\rho S_{0}
\end{aligned}
\end{equation}
for $\rho \in \mathbf{N}$. To establish small gaps between the set of primes and almost primes, we consider the sum  
\begin{equation}
\label{sdnrho}
\begin{aligned}
S^{'}(N, \rho)=&\sum _{m=1}^{k}(S_{1}^{(m)}+S_{2}^{(m)})-\rho S_{0} \\
=&\sum _{m=1}^{k}\left( S_{1}^{(m)}+S_{2,I}^{(m)}+S_{2, I\hspace{-.em}I}^{(m)}+S_{2, I\hspace{-.em}I\hspace{-.em}I}^{(m)}+S_{2, I\hspace{-.em}V}^{(m)} \right)-\rho S_{0}
\end{aligned}
\end{equation}
for $\rho \in \mathbf{N}$. If $S(N, \rho)\to \infty$, there exist infinitely many $n$ for which at least $\rho +1$ of $n+h_{1}, \cdots ,n+h_{k}$ are $E_{2}$-numbers. If $S^{'}(N, \rho)\to \infty$, there exist infinitely many $n$ for which at least $\rho +1$ of $n+h_{1}, \cdots ,n+h_{k}$ are primes or $E_{2}$-numbers. We have computed all terms to obtain the asymptotic formulas for $S(N,\rho)$ and $S^{'}(N,\rho)$. The terms $S_{2, I}^{(m)}$ and $S_{2, I\hspace{-.em}I}^{(m)}$ are obtained in Proposition 3.7, the term $S_{2, I\hspace{-.em}I\hspace{-.em}I}^{(m)}$ is obtained in Proposition 4.2, and the term $S_{2, I\hspace{-.em}V}^{(m)}$ is obtained in Proposition 5.1. Also, the terms $S_{0}$ and $S_{1}^{(m)}$ are obtained by Maynard (\cite{M1}), and the result is summarized in Proposition 2.1.  By these propositions, we find that under the assumptions of $BV[\theta ,{\cal P}]$ and $BV[\theta ,{\cal E}_{2}]$, $S(N,\rho)$ is asymptotically 
\begin{align*}
&\left\{ -\theta ^{'}\sum _{m=1}^{k}L_{k, \delta }^{(m)}(F)+\frac{\theta ^{'2}}{4}\left(  \log \frac{1-\eta}{\eta} \right)\sum _{m=1}^{k}J_{k}^{(m)}(F)+\sum _{m=1}^{k}M_{k,\delta }^{(m)}(F)-\frac{\rho \theta ^{'}}{2}I_{k}(F) \right\} \\
& \quad\quad\quad \times \left( \frac{\theta ^{'}}{2} \right)^{k-1}\frac{\varphi (W)^{k}N(\log N)^{k}}{W^{k+1}}
\end{align*}
and $S^{'}(N,\rho)$ is asymptotically 
\begin{align*}
&\left\{ -\theta ^{'}\sum _{m=1}^{k}L_{k, \delta }^{(m)}(F)+\frac{\theta ^{'2}}{4}\left(1+  \log \frac{1-\eta}{\eta} \right)\sum _{m=1}^{k}J_{k}^{(m)}(F)+\sum _{m=1}^{k}M_{k,\delta }^{(m)}(F)-\frac{\rho \theta ^{'}}{2}I_{k}(F) \right\} \\
& \quad\quad\quad \times \left( \frac{\theta ^{'}}{2} \right)^{k-1}\frac{\varphi (W)^{k}N(\log N)^{k}}{W^{k+1}},
\end{align*}
whenever the leading coefficient is not zero, where 
\[
\theta ^{'}=\theta -2\delta .
\]
We take the limit $\delta \to +0$ and see when the leading coefficients 
\begin{equation}
\label{coeffofsnrho}
 -\theta \sum _{m=1}^{k}L_{k, 0 }^{(m)}(F)+\frac{\theta ^{2}}{4}\left(  \log \frac{1-\eta}{\eta} \right)\sum _{m=1}^{k}J_{k}^{(m)}(F)+\sum _{m=1}^{k}M_{k,0 }^{(m)}(F)-\frac{\rho \theta }{2}I_{k}(F) 
\end{equation}
or
\begin{equation}
\label{coeffofsdnrho}
 -\theta \sum _{m=1}^{k}L_{k, 0 }^{(m)}(F)+\frac{\theta ^{2}}{4}\left(  1+ \log \frac{1-\eta}{\eta} \right)\sum _{m=1}^{k}J_{k}^{(m)}(F)+\sum _{m=1}^{k}M_{k,0}^{(m)}(F)-\frac{\rho \theta }{2}I_{k}(F)
\end{equation}
become positive.

\section{The proof of Theorem 1.1}
Let $\rho \in \mathbf{N}$ be sufficiently large. We use the same test function as in \cite{M1}. That is, we define the test function $F$ by 
\begin{eqnarray}
F(u_{1},\cdots ,u_{k})=\left\{ \begin{array}{ll}
 \prod _{i=1}^{k}g(ku_{i}) & (u_{1},\cdots ,u_{k}\geq 0, u_{1}+\cdots +u_{k}\leq 1) \\
 0 & (\mathrm{otherwise}), \\
\end{array} \right.
\end{eqnarray} 
where the function $g:[0, \infty) \to \mathbf{R}$ is defined by
\begin{eqnarray}
g(u)=\left\{ \begin{array}{ll}
 \frac{1}{1+Au} & (0\leq u\leq T) \\
 0 & (u >T) \\
\end{array} \right.
\end{eqnarray}
with $A=\log k-2\log \log k$, $T=(e^{A}-1)/A$.  We choose $\eta$ by 
\[
\eta =\frac{\theta T}{k}\sim \frac{\theta}{(\log k)^{3}}.
\]
In this case the function $F_{m,0}$ is given by 
\[
F_{m,0}(u_{1},\cdots ,u_{k}; \xi)=g\left( k\left( \frac{2\xi}{\theta}+\frac{\theta -2\xi}{\theta}u_{m} \right) \right)\prod _{i\neq m}g(ku_{i}).
\]
When the pair $(u_{m}, \xi)$ moves $[0,1]\times [\eta ,\frac{\theta}{2}]$, we have
\[
 k\left( \frac{2\xi}{\theta}+\frac{\theta -2\xi}{\theta}u_{m} \right)\geq \frac{2k\eta}{\theta} =2T>T.
\]
Therefore, we have $F_{m,0}\equiv 0$, so $L_{k}^{(m)}(F)=M_{k}^{(m)}(F)=0$ for $m=1,\cdots ,k$. Hence if 
\begin{equation}
\label{82}
\frac{\frac{\theta ^{2}}{4}\left( \log \frac{1-\eta}{\eta} \right)\sum _{m=1}^{k}J_{k}^{(m)}(F)}{\frac{\theta}{2}I_{k}(F)} >\rho ,
\end{equation}
(\ref{coeffofsnrho}) becomes positive. Using the inequality in \cite{M1} (p.408),  for any $\epsilon >0$,  if $k$ is sufficiently large, the left hand side of (\ref{82}) is 
\begin{equation}
\label{83}
\begin{aligned}
&=\frac{\theta}{2}\left( \log \frac{1-\eta}{\eta} \right)\frac{kJ_{k}^{(1)}(F)}{I_{k}(F)} \\
&\quad \geq \frac{3\theta}{2}( \log \log k )(1+o(1))(\log k-2\log \log k -2) \\
&\quad \geq \frac{3\theta}{2}\left( 1-\frac{\epsilon }{4}\right)(\log k\log \log k -3(\log \log k)^{2}).
\end{aligned}
\end{equation}
We put 
\[
k=\left[ \exp \left( \frac{(2+\epsilon )\rho }{3\theta \log \rho } \right)+1 \right],
\]
where $[a]$ implies the largest integer less than $a$. Then the third line of (\ref{83}) is 
\begin{align*}
& \geq \frac{3\theta}{2}\left( 1-\frac{\epsilon }{4}\right) \left( \frac{(2+\epsilon )\rho }{3\theta \log \rho } \log \left( \frac{(2+\epsilon )\rho }{3\theta \log \rho } \right) -3 \left( \log \left( \frac{(2+\epsilon )\rho }{3\theta \log \rho } \right) \right)^{2} \right) \\
& \geq \left( 1+\frac{\epsilon}{5} \right) \rho +O\left( \frac{\rho \log \log \rho}{\log \rho} \right).
\end{align*}
This is greater than $\rho$ whenever $\rho$ is sufficiently large. Hence (\ref{82}) holds for $k \sim \exp \left((2+\epsilon )\rho /(3\theta \log \rho )\right)$. We can choose the admissible set by ${\cal H}=\{ p_{\pi (k)+1}, p_{\pi (k)+2}, \cdots ,p_{\pi (k)+k} \}$, where $p_{n}$ denotes the $n$-th prime. Hence there exist infinitely many $n$ for which at least $\rho +1$ of $n+p_{\pi (k)+1}, \cdots ,n+p_{\pi (k)+k}$ are $E_{2}$- numbers. Since
\[
p_{\pi (k)+k}-p_{\pi (k)+1} \ll k\log k \ll \exp \left( \frac{(2+2\epsilon )\rho}{3\theta \log \rho } \right),
\]
by replacing $\epsilon$ with $\epsilon /2$, the proof of Theorem 1.1 is completed. \quad\quad\quad\quad\quad $\Box$ \\
\begin{rem}
Recently Takayuki Neshime, who belongs to the master course of Tokyo Institute of Technology, told me that by choosing the parameter $A$ in a different way and evaluating the contribution of $L_{k}^{(m)}(F)$, we can obtain a better upper bound
\[
\liminf _{n\to \infty} (q_{n+m}-q_{n})\ll \sqrt{m}\exp \sqrt{\frac{8m}{\theta}},\]
assuming $BV[\theta ,{\cal P}]$ and $BV[\theta ,{\cal E}_{2}]$. 
\end{rem}

\section{The proofs of other theorems}
The numerical computations below are accomplished by {\it Mathematica}. To prove Theorem 1.2, it suffices to show that the leading coefficient (\ref{coeffofsdnrho}) with $k=6, \rho =2, \theta =\frac{1}{2}$ becomes positive for some test function $F$. We define this by 
\begin{align*}
F(x,y,z,u,v,w)=&1-\frac{143577}{50000}P_{1}+\frac{12337}{5000}P_{1}^{2}+\frac{86987}{50000}P_{2} \\
&\quad -\frac{619873}{1000000}P_{1}^{3}-\frac{156481}{100000}P_{1}P_{2}-\frac{230073}{5000000}P_{3}
\end{align*}
if $x,y,z,u,v,w \geq 0, x+y+z+u+v+w \leq 1$ and otherwise $F(x,y,z,u,v,w):=0$,  where $P_{i}=x^{i}+y^{i}+z^{i}+u^{i}+v^{i}+w^{i}$ $(i=1,2,3)$. We take $\eta =10^{-10}$. Then, by numerical computations, we find that 
\[
I_{6}(F)=5.30806\cdots \times 10^{-6}, \quad J_{6}(F)=1.88915\cdots \times 10^{-6}, 
\]
\[
L_{6, 0}^{(m)}(F)=9.20744\cdots \times 10^{-6}, \quad M_{6, 0}^{(m)}(F)=2.22265\cdots \times 10^{-6},
\]
hence 
\begin{align*}
&-\frac{1}{2}\sum _{m=1}^{6}L_{6, 0}^{(m)}(F)+\frac{1}{16}\left( 1+\log \frac{1-10^{-10}}{10^{-10}} \right)\sum _{m=1}^{6}J_{6}^{(m)}(F)+\sum _{m=1}^{6}M_{6, 0}^{(m)}(F)-\frac{1}{2}I_{6}(F) \\
&\quad  =8.02\cdots \times 10^{-8}>0.
\end{align*}
This proves Theorem 1.2. 

Next we prove Theorem 1.3. It suffices to show that the leading coefficient (\ref{coeffofsdnrho}) with $k=3, \rho =2, \theta =1$ becomes positive for some test function $F$. We define this by 
\begin{eqnarray}
F(x,y,z)=\left\{ \begin{array}{ll}
 (1-x)(1-y)(1-z) & (x,y,z \geq 0, x+y+z \leq 1) \\
 0 & (\mathrm{otherwise}) \\
\end{array} \right.
\end{eqnarray}
and put $\eta =10^{-10}$. Then, by numerical computations, we find that 
\[
I_{3}(F)=0.0287919\cdots ,\quad  J_{3}^{(m)}(F)=0.0154828 \cdots ,
\]
\[
L_{3, 0}^{(m)}(F)=0.1606331\cdots ,\quad M_{3, 0}^{(m)}(F)=0.0779163\cdots 
\]
for $m=1, 2, 3$. Consequently, 
\[
-\sum _{m=1}^{3}L_{3, 0}^{(m)}(F)+\frac{1}{4}\left( 1+\log \frac{1-10^{-10}}{10^{-10}} \right)\sum _{m=1}^{3}J_{3}^{(m)}(F)+\sum _{m=1}^{3}M_{3, 0}^{(m)}(F)-I_{3}(F) =0.00204\cdots >0.
\]
This proves Theorem 1.3. 

To prove Theorem 1.4, we see that the number  (\ref{coeffofsnrho}) with $k=5, \rho =2, \theta =1$ becomes positive for some test function $F$. We define this by 
\begin{align*}
F(x,y,z,u,v)=&1+\frac{917}{500}Q_{1}-\frac{281}{50}Q_{1}^{2}-\frac{41}{25}Q_{2}+\frac{287}{100}Q_{1}^{3} +\frac{191}{100}Q_{1}Q_{2}-\frac{81}{250}Q_{3}
\end{align*}
if $x,y,z,u,v \geq 0, x+y+z+u+v \leq 1$, and otherwise $F(x,y,z,u,v):=0$, where $Q_{i}=x^{i}+y^{i}+z^{i}+u^{i}+v^{i}$ $(i=1,2,3)$. Moreover, we take $\eta =10^{-10}$. Then by numerical computations, we find that 
\[
I_{5}(F)=\frac{1735763}{1732500000}, \quad J_{5}^{(m)}(F)=\frac{722755717}{1871100000000}, 
\]
\[
L_{5, 0}^{(m)}(F)=0.00392368 \cdots , \quad M_{5, 0}^{(m)}(F)=0.00190092 \cdots 
\]
for $1\leq m\leq 5$. Consequently, 
\begin{align*}
&-\sum _{m=1}^{5}L_{5, 0}^{(m)}(F)+\frac{1}{4}\left( \log \frac{1-10^{-10}}{10^{-10}} \right)\sum _{m=1}^{5}J_{5}^{(m)}(F)+\sum _{m=1}^{5}M_{5, 0}^{(m)}(F)-I_{5}(F) \\
&\quad \quad =2.13079\cdots \times 10^{-6} >0.
\end{align*}
Since the set ${\cal H}=\{ 0,2,6,8,12 \}$ is an admissible set with five elements, the statement of Theorem 1.4 is obtained. \quad\quad\quad\quad\quad\quad\quad\quad\quad\quad\quad\quad\quad\quad\quad\quad\quad $\Box$


\noindent
Ehime University, \\
Dogo-Himata, Matsuyama\\
Ehime, Japan\\
E-mail address: sono.keiju.jk@ehime-u.ac.jp

\end{document}